\newtheorem{thm}{Theorem}[section]
\newtheorem{cor}[thm]{Corollary}
\newtheorem{prop}[thm]{Proposition}
\newtheorem{lem}[thm]{Lemma}
\newtheorem{fac}[thm]{Fact}
\theoremstyle{definition}
\newtheorem{defn}[thm]{Definition}
\newtheorem{exa}[thm]{Example}
\newtheorem{exas}[thm]{Examples}
\newtheorem{rem}[thm]{Remark}
\newtheorem{prob}{Problem}
\newcommand{\eg}{\emph{e.g.}}
\newcommand{\ie}{\emph{i.e.}}
\newcommand{\smsum}{\textstyle\sum\limits}
\let\on\operatorname
\let\phi\varphi
\begin{document}

\title[Congruence-semisimple semirings and $K_{0}$-group characterization]%
{On congruence-semisimple semirings and the $K_{0}$-group characterization
  of ultramatricial algebras over semifields}

\author{Yefim~Katsov}
\address{Department of Mathematics \\
  Hanover College, Hanover, IN 47243--0890, USA}
\email{katsov@hanover.edu}

\author{Tran Giang Nam}
\address{Institute of Mathematics, VAST \\
  18 Hoang Quoc Viet, Cau Giay, Hanoi, Vietnam}
\email{tgnam@math.ac.vn} 

\author{Jens Zumbr\"agel}
\address{Faculty of Computer Science and Mathematics \\
  University of Passau, Germany}
\email{jens.zumbraegel@uni-passau.de}

\thanks{The second author is supported by the Vietnam National
  Foundation for Science and Technology Development (NAFOSTED)
  under Grant 101.04-2017.19.}

\subjclass[2010]{Primary 16Y60, 16E20, 18G05}

\begin{abstract}\sloppy
  In this paper, we provide a complete description of
  congruence-semisimple semirings and introduce the pre-ordered
  abelian Grothendieck groups $K_{0}(S)$ and $SK_{0}(S)$ of the
  isomorphism classes of the finitely generated projective and
  strongly projective $S$-semimodules, respectively, over an arbitrary
  semiring~$S$.  We prove that the $SK_{0}$-groups and $K_{0}$-groups
  are complete invariants of, \ie, completely classify, ultramatricial
  algebras over a semifield~$F$.  Consequently, we show that the
  $SK_{0}$-groups completely characterize zerosumfree
  congruence-semisimple semirings. \medskip
\end{abstract}

\maketitle

\section{Introduction}

As is well-known (see, for example, \cite{bass:akt}), projective
modules play a fundamental role in developing of algebraic $K$-theory
which, in turn, has crucial outcomes in many areas of modern
mathematics such as topology, geometery, number theory, functional
analysis, etc.  In short, algebraic $K$-theory is a study of groups of
the isomorphism classes of algebraic objects, the first of which is
$K_{0}(R)$, Grothendieck's group of the isomorphism classes of
finitely generated projective $R$-modules, and that is used to create
a sort of dimension for $R$-modules that lack a basis.  Therefore, the
structure theory of projective modules is certainly of a great
interest and importance.

Semirings, semimodules, and their applications, arise in various
branches of mathematics, computer science, quantum physics, and many
other areas of science (see, for example, \cite{golan:sata} and
\cite{glazek:agttlos}).  As algebraic structures, semirings certainly
are the most natural generalization of such (at first glance
different) algebraic concepts as rings and bounded distributive
lattices, and therefore, they form a very natural and exciting ground
for furthering the structure theory of projective (semi)modules in a
``non-additive'' categorical setting.  And, in fact, the structure
theory of projective semimodules has been recently considered by
several authors (see, \eg, \cite{kat:thcos}, \cite{patch:psoswvini},
\cite{kn:meahcos}, \cite{ik:ospopsops}, \cite{ijk:pdapotp}, \cite%
{mac:pmops}, \cite{ikr:domlzs} and \cite{ikn:thstososaowcsap}).  Also,
in the last one or two decades, there can be observed an
intensively growing substantial interest in additively idempotent
semirings, which particularly include the Boolean and tropical
semifields and have a fundamental meaning in such relatively new,
``non-traditional'', and fascinating areas of modern mathematics such
as tropical geometry~\cite{rst:fsitg} and~\cite{gg:eotv}, tropical/%
supertropical algebra~\cite{ir:sa}, $\mathbb{F}_{1}$-geometry~\cite%
{cc:sofazf}, and the geometry of blueprints~\cite{lor:tgob}.

Although, in general, describing the structure of (finitely generated)
projective semimodules seems to be a quite difficult task, recently
there have been obtained a number of interesting results regarding
structures of projective semimodules over special classes of semirings
among which we mention, for example, the following ones.  Il'in et
al.~\cite{ikn:thstososaowcsap} initiated a homological structure
theory of semirings and investigated semirings all of whose cyclic
semimodules are projective; Izhakian et al.\ \cite{ijk:pdapotp}
characterized finitely generated projective semimodules over a
tropical semifield in terms of rank functions of semimodules; and
Macpherson~\cite{mac:pmops} classified projective semimodules over
additively idempotent semirings that are free on a monoid.
Further, motivated by direct sum decompositions of subsemimodules of
free semimodules over a tropical semifield and related structures,
Izhakian et al.\ \cite{ikr:domlzs} developed a theory of the
decomposition socle, $\text{dsoc}(M)$, for zerosumfree semimodules~$M$.
In particular, they provided a criterion for zerosumfree semirings~$S$
when $\text{dsoc}(S)=S$ (\cite[Thm.~3.3]{ikr:domlzs}) and established
the uniqueness of direct sum decompositions for some special finitely
generated projective semimodules (\cite[Cor.~3.4]{ikr:domlzs}), called
`\emph{strongly projective}' in the present paper, over such semirings.

Moreover, Elliott \cite{elliott:otcoilososfa} classified/characterized
ulramatricial algebras over an arbitrary field by means of their
pointed ordered Grothendieck groups~$K_{0}$.  This fundamental result
implies a $C^{\ast}$-algebra technique and initiated very fruitful
research lines in algebra and operator algebra, not to mention that
the Elliott program of classifying simple nuclear separable
$C^{\ast}$-algebras by $K$-theoretic invariants became a profoundly
active area of research (see, \eg, the survey paper by Elliott and
Toms~\cite{et:rpitcpfsaa}).

In light of the two previous paragraphs and motivated by the Elliott
program of classifying $C^{\ast}$-algebras in terms of $K$-theory, our
paper has a twofold goal: to characterize the decomposition socles and
structure of (finitely generated) projective semimodules over a
semiring~$S$ in terms of the Grothendieck group $K_{0}(S)$ of a
semiring~$S$; and to extend Elliott's classification of ultramatricial
algebras to a ``non-additive'' semiring setting.  Let us a briefly
clarify the latter: If~$F$ is a (semi)field and~$\mathcal{C}$ is a class
of unital $F$-algebras, then one says that the $K_{0}$-group is a
\emph{complete invariant} for algebras in~$\mathcal{C}$, or that $K_{0}$
\emph{completely classifies} $F$-algebras in~$\mathcal{C}$, if
any $F$-algebras~$R$ and~$S$ from $\mathcal{C}$ are isomorphic as
$F$-algebras iff there is a group isomorphism $K_{0}(R) \cong K_{0}(S)$
which respects the natural pre-order structure of the $K_{0}$-groups
and their order-units.  It should be mentioned that the ``blueprints''
of Lorscheid \cite{lor:tgob} contain commutative semirings as a full
subcategory, which eventually leads to a $K$-theory of blueprints,
including a $K$-theory of commutative semirings as a special case, and
the group $K_{0}(S)$ of a semiring~$S$ has been introduced by Di Nola
and Russo \cite{dr:tstatmaas}.  However, the considerations of~$K_{0}(S)$
in our paper are distinguished from those in \cite{lor:tgob} and
\cite{dr:tstatmaas} --- we consider two quite different types of
$K_{0}$-groups and, to the extend of our knowledge, at the first
time use them as complete invariants for classifying algebras of a
non-additive category in the spirit of the Elliot program.

The article is organized as follows.  In Section~\ref{sec:2}, for the
reader's convenience, we briefly collect the necessary notions and
facts on semirings and semimodules.  Subsequently, we provide in
Section~\ref{sec:3} a full description of congruence-semisimple
semirings (Theorem~\ref{thm:34}) and show that zerosumfree
congruence-semi\-simple semirings are precisely matricial algebras
over the Boolean semifield~$\mathbb{B}$ (Corollary~\ref{cor:35}).  In
Section~\ref{sec:4}, beyond of some basic considerations of strongly
projective semimodules under change of semirings
(Propositions~\ref{prop:48} and~\ref{prop:410}), we give a complete
description of the strongly projective semimodules over an arbitrary
semisimple semiring (Theorems~\ref{thm:45} and~\ref{thm:49}).

Based on the results of Section~\ref{sec:4} and \cite[Ch.~15]{g:vnrr},
in Section~\ref{sec:5} we introduce and establish fundamental
properties of the monoids $\mathcal{V}(S)$ ($\mathcal{SV}(S)$) of the
isomorphism classes of finitely generated (strongly) projective
semimodules over a semiring~$S$ and show that those monoids completely
characterize the class of ultramatricial algebras over a semifield $F$
(Theorems~\ref{thm:510} and~\ref{thm:511}).

In Section~\ref{sec:6}, using the results of Sections~\ref{sec:4}
and~\ref{sec:5}, we consider the pre-ordered abelian groups $K_{0}(S)$
and $SK_{0}(S)$---which are the Grothendieck groups on the monoids
$\mathcal{V}(S)$ and $\mathcal{SV}(S)$, respectively---for an
arbitrary semiring~$S$, and, using the concept of `\emph{weak
  dimension}' of semimodules, describe division semirings~$D$ having
the groups $K_{0}(D)$ and $SK_{0}(D)$ to be isomorphic
(Theorem~\ref{thm:610}).  Also, it is shown
(Proposition~\ref{prop:67}) that, for any additively idempotent
commutative semiring~$S$, the group $K_{0}(S)$ always contains a free
abelian group with countably infinite basis; and it is given
(Theorem~\ref{thm:614}) for semirings~$S$ having $\text{dsoc}(S)=S$,
\ie, \emph{congruence-semisimple} semirings here, a $K$-theory version
of \cite[Cor.~3.4]{ikr:domlzs}.  Finally, we extend Elliott's
classification theorem for ultramatricial algebras over fields
\cite{elliott:otcoilososfa} and show that the $SK_{0}$-groups and
$K_{0}$-groups are complete invariants of ultramatricial algebras over
semifields (Theorems~\ref{thm:621} and~\ref{thm:623}), as well as that
$SK_{0}$ completely classifies zerosumfree congruence-semisimple
semrings (Theorem~\ref{thm:622}).

All notions and facts of categorical algebra, used here without any
comments, can be found in \cite{macl:cwm}; for notions and facts from
semiring theory we refer to \cite{golan:sata}.

\section{Preliminaries}\label{sec:2}

Recall~\cite{golan:sata} that a \emph{semiring} is an algebra
$(S, +, \cdot, 0, 1)$ such that the following conditions are satisfied:
\begin{enumerate}[label=(\arabic*)]
\item $(S, +, 0)$ is a commutative monoid with identity element~$0$;
\item $(S, \cdot, 1)$ is a monoid with identity element~$1$;
\item multiplication distributes over addition from either side;
\item $0 s = 0 = s 0$ for all $s \in S$.
\end{enumerate}
Given two semirings~$S$ and $S'$, a map $\phi \colon S \to S'$ is a
\emph{homomorphism} if it satisfies $\phi(0) = 0$, $\phi(1) = 1$,
$\phi(x + y) = \phi(x) + \phi(y)$ and $\phi(x y) = \phi(x)\phi(y)$ for
all $x, y\in S$.

A semiring~$S$ is \emph{commutative} if $(S, \cdot, 0)$ is a
commutative monoid; and~$S$ is \emph{entire} if $a b = 0$ implies that
$a = 0$ or $b = 0$ for all $a, b \in S$.  The semiring~$S$ is a
\emph{division semiring} if $(S \!\setminus\! \{ 0 \}, \cdot, 1)$ is a
group; and~$S$ is a \emph{semifield} if it is a commutative division
semiring.  An element~$e$ in a given semiring~$S$ is \emph{idempotent}
if $e^2 = e$; and an idempotent $e \in S$ is \emph{strong} if there
exists an idempotent $f \in S$ such that $e + f = 1$ and
$e f = 0 = f e$.  Two idempotents $e, f \in S$ are \emph{orthogonal}
if $e f = 0 = f e$.  An idempotent is \emph{primitive} if it cannot be
written as the sum of two nonzero orthogonal idempotents.

As usual, a right $S$-\emph{semimodule} over a given semiring~$S$ is a
commutative monoid $(M, +, 0_{M})$ together with a scalar
multiplication $(m, s) \mapsto m s$ from $M \times S$ to~$M$ which
satisfies the identities $m (s s') = (m s) s'$,
$(m + m') s = m s + m' s$, $m (s + s') = m s + m s'$, $m 1 = m$,
$0_{M} s = 0_{M} = m 0$ for all $s, s' \in S$ and $m, m' \in M$.  Left
semimodules over~$S$ and homomorphisms between semimodules are defined
in the standard manner.  An $S$-semimodule~$M$ is called a
\emph{module} if its additive reduct $(M, +, 0_{M})$ is an abelian
group.  Let henceforth $\mathcal{M}$ be the variety of commutative
monoids, and let $\mathcal{M}_{S}$ and $_{S} \mathcal{M}$ denote the
categories of right and left $S$-semimodules, respectively, over a
semiring~$S$.

Recall~\cite[Def.~3.1]{kat:tpaieosoars} the tensor product bifunctor
$- \otimes - \colon \mathcal{M}_{S} \times _{S} \mathcal{M} \to
\mathcal{M}$, which for a right semimodule $A \in |\mathcal{M}_{S}|$
and a left semimodule $B \in |_{S}\mathcal{M}|$ can be described as
the factor monoid $F / \sigma$ of the free monoid $F \in |\mathcal{M}|$,
generated by the Cartesian product $A \times B$, factorized with
respect to the congruence~$\sigma$ on~$F$ generated by the ordered
pairs having the form
\[ \langle (a_{1} + a_{2}, b), (a_{1}, b) + (a_{2}, b) \rangle , \
  \langle (a, b_{1} + b_{2}), (a, b_{1}) + (a, b_{2}) \rangle , \
  \langle (a s, b), (a, s b) \rangle , \]
with $a_{1}, a_{2} \in A$, $b_{1}, b_{2} \in B$ and $s \in S$.

An $S$-semimodule~$M$ is called \emph{(additively) idempotent} (resp.,
\emph{zerosumfree}) if $m + m = m$ for all $m \in M$ (resp., if
$m + m' = 0$ implies $m = m' = 0$ for all $m, m' \in M$); clearly,
every idempotent semimodule is zerosumfree.  In particular, a
semiring~$S$ is \emph{additively idempotent} (resp.,
\emph{zerosumfree}, a \emph{ring}) if $S_{S} \in |\mathcal{M}_{S}|$
as a semimodule is idempotent (resp., zerosumfree, a module).  Two
well-known important examples of additively idempotent semirings are
the \emph{Boolean semifield} $\mathbb{B} := (\{ 0, 1 \}, \max,
\min, 0, 1)$ and the \emph{tropical semifield} $\mathbb{T} :=
(\mathbb{R} \cup \{-\infty\}, \max, +, -\infty, 0)$.

By an $S$-algebra~$A$ over a given commutative semiring~$S$ we mean
the data of an $S$-semimodule~$A$ and of an associate multiplication
on~$A$ that is bilinear with respect to the operations of the
$S$-semimodule~$A$.  For example, every semiring may be considered as
a $\mathbb{Z}^+$-algebra and any additively idempotent semiring as a
$\mathbb{B}$-algebra.  An $S$-algebra~$A$ is called \emph{unital} if
the multiplication on~$A$ has a neutral element $1_A$, \ie,
$a 1_A = a = 1_A a$ for all $a \in A$.

As usual (see, for example, \cite[Ch.~17]{golan:sata}), if~$S$ is a
semiring, then in the category~$\mathcal{M}_{S}$, a \emph{free} right
semimodule~$F$ with basis set~$I$ is a direct sum (a coproduct) of
$I$~copies of $S_{S}$, \ie, $F = \bigoplus_{i\in I} S_{i}$ where
$S_{i} \cong S_{S}$ for $i\in I$.  Accordingly, a projective
semimodule in $\mathcal{M}_{S}$ is defined to be a retract of a free
semimodule, \ie, a right semimodule~$P$ is called \emph{projective} if
there is a free right semimodule~$F$ with homomorphisms $f \colon F
\to P$ and $g \colon P \to F$ such that $f \circ g = \on{id}_P$.  And
a semimodule~$M_{S}$ is \emph{finitely generated} if it is a
homomorphic image of a free semimodule with finite basis set.
Moreover, a semiring~$S$ is said to have the \emph{IBN} (``invariant
basis number'') property (cf.~\cite[Def.~2.8]{klp:aocofmfsaflm}) if, for
any natural numbers $n, m$, the free semimodules~$S^{m}$ and~$S^{n}$
are isomorphic in $\mathcal{M}_{S}$ if and only if $m = n$.  Note that
the ``left'' version of the IBN property is equivalent to this right
version, see~\cite[Prop.~3.1]{klp:aocofmfsaflm}.

\emph{Congruences} on a right $S$-semimodule~$M$ are defined in the
standard manner.  Any subsemimodule~$L$ of a right $S$-semimodule~$M$
induces a congruence~$\equiv_{L}$ on~$M$, known as the \emph{Bourne
  congruence}, by setting $m \equiv_{L} m'$ iff $m + l = m' + l'$ for
some $l, l' \in L$.  The subsemimodule~$L$ is \emph{subtractive} if
$a \in L$ and $x + a \in L$ (where $x \in M$) implies $x \in L$.  In
this case, the zero class~$[0]$ with respect to $\equiv_{L}$ coincides
with~$L$.  A nonzero right $S$-semimodule~$M$ is called
\emph{congruence-simple} if its only congruences are the
\emph{identity congruence} $\vartriangle_M \,:=
\{ (m, m) \mid m \in M \}$ and the \emph{universal congruence}
$M \!\times\! M$ --- in this case, its only subtractive subsemimodules
are $\{ 0 \}$ and~$M$.  Accordingly, a right ideal~$I$ of a
semiring~$S$ is called \emph{congruence-simple} if the right
$S$-semimodule~$I$ is congruence-simple.  And a semiring~$S$ is called
\emph{right (left) congruence-semisimple} if~$S$ is a direct sum of
congruence-simple right (left) ideals.

Finally, a nonzero right $S$-semimodule~$M$ is called \emph{minimal}
if it has no proper nonzero subsemimodules, and the $S$-semimodule~$M$
is said to be \emph{semisimple} if it is a direct sum of minimal
subsemimodules; in particular, a semiring~$S$ is said to be
\emph{right (left) semisimple} if the right (left) regular semimodule
is semisimple.  As is well-known (see, for example,
\cite[Thm.~7.8]{hebwei:hoa} or \cite[Thm.~4.5]{knt:ossss}), the
celebrated Artin-Wedderburn theorem generalized to semirings states
that a semiring~$S$ is (right, left) semisimple if and only if
\[ S \,\cong\, M_{n_{1}}(D_{1}) \times \ldots \times M_{n_{r}}(D_{r}) \,, \]
where $M_{n_{i}}(D_{i})$ is the semiring of $n_{i} \times n_{i}$-matrices
over a division semiring~$D_{i}$ for each $i = 1, \ldots, r$.  In the
sequel, we refer to such an isomorphism as a \emph{direct product
  representation} of a semisimple semiring~$S$.

\section{Congruence-semisimple semirings}\label{sec:3}

Providing a full description of the class of all congruence-semisimple
semirings constitutes a main goal of this section; and to accomplish
it, we need the following useful facts.

\begin{lem}[{\cite[Prop.~1.2]{i:vs}}]\label{lem:31}
  If $M \in |\mathcal{M}_{S}|$ is a congruence-simple right
  $S$-semimodule, then~$M$ is either idempotent or a module.
\end{lem}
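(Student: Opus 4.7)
The plan is to produce a nontrivial proper congruence on $M$ in every situation that fails to be either idempotent or a module, and then invoke congruence-simplicity. The organizing device is the ``invertible part''
$N := \{m \in M : m + m' = 0 \text{ for some } m' \in M\}$.
A routine check shows that $N$ is an $S$-subsemimodule of $M$: closure under $+$ is immediate, and closure under the $S$-action uses $0 s = 0$. Note that $M$ is a module if and only if $N = M$, so the two cases $N \neq \{0\}$ and $N = \{0\}$ may be treated separately.

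Assume first that $N \neq \{0\}$ and pick $n \in N \setminus \{0\}$ with additive inverse $n' \in N$. Then $n + n' = 0 = 0 + 0$ with $n', 0 \in N$, exhibiting $n \equiv_N 0$ in the Bourne congruence, so $\equiv_N \, \neq \, \vartriangle_M$. By congruence-simplicity, $\equiv_N$ must be the universal congruence on $M$, and so for every $m \in M$ there exist $l, l' \in N$ with $m + l = l'$. Choosing $l'' \in N$ with $l' + l'' = 0$ yields $m + (l + l'') = 0$, so $m \in N$. Hence $N = M$, and $M$ is a module.

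It remains to treat the case $N = \{0\}$ and show $M$ is idempotent. Consider the $S$-semimodule congruence $\rho$ on $M$ generated by $R := \{(a + a, a) : a \in M\}$. Since $R$ is stable under the $S$-action, $\rho$ coincides with the commutative-monoid congruence on $(M, +, 0)$ generated by $R$, and hence admits the usual rewriting description: $x \rho y$ iff there is a chain $x = z_0, z_1, \ldots, z_k = y$ with $z_i = c_i + u_i$ and $z_{i+1} = c_i + v_i$ for some $c_i \in M$ and $(u_i, v_i) \in R \cup R^{-1}$. If $\rho = \vartriangle_M$ then $a + a = a$ for every $a \in M$ and $M$ is idempotent. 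Otherwise $\rho$ is universal by congruence-simplicity, so $0 \rho m$ for every $m \in M$; but the very first step of such a chain demands $0 = c_0 + u_0$, which together with $N = \{0\}$ forces $c_0 = u_0 = 0$. Since $u_0 \in \{a_0, a_0 + a_0\}$ for some $a_0 \in M$, the condition $N = \{0\}$ also forces $a_0 = 0$ (either directly, or via $a_0 + a_0 = 0 \Rightarrow a_0 \in N$), whence $v_0 = 0$ and $z_1 = 0$. Iterating gives $z_i = 0$ throughout, so $m = 0$, contradicting $M \neq 0$. Hence $\rho = \vartriangle_M$, and $M$ is idempotent.

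The main obstacle is the second case: one has to identify the ``right'' congruence $\rho$ and realize that, once $N = \{0\}$, the equation $0 = c + u$ in $M$ has only the trivial solution, so the rewriting chain beginning at $0$ is pinned at $0$ forever. With this propagation in hand, both cases close by a one-line appeal to congruence-simplicity.
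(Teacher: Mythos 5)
Your proposal is correct. Note that the paper itself gives no proof of Lemma~3.1: it is quoted verbatim from Il'in's paper (\cite[Prop.~1.2]{i:vs}), so there is no in-paper argument to compare against. Your argument is a legitimate, self-contained proof. The first half is the standard move: the ``invertible part'' $N$ is a subsemimodule, its Bourne congruence collapses some nonzero element to $0$ as soon as $N\neq\{0\}$, and universality of $\equiv_N$ then pulls every element into $N$ (using that the class of $0$ absorbs inverses), so $M$ is a module. The second half is the more delicate part, and you handle it correctly: when $M$ is zerosumfree you take the congruence $\rho$ generated by $\{(a+a,a)\}$, observe that this generating set is stable under the $S$-action so that the commutative-monoid rewriting description $z_i=c_i+u_i\rightsquigarrow z_{i+1}=c_i+v_i$ applies, and then use zerosumfreeness to show that any rewriting chain starting at $0$ is pinned at $0$ (since $0=c+u$ forces $c=u=0$, and $u\in\{a,a+a\}$ with $u=0$ forces $a=0$ and hence $v=0$). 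Thus $\rho$ cannot be universal, must be the identity, and $M$ is idempotent. All the auxiliary claims you rely on (that $N$ and the Bourne congruence are genuinely $S$-compatible, that the monoid congruence generated by an $S$-stable relation is an $S$-congruence, and the one-step translation form of generated congruences on commutative monoids) check out, so the proof is complete.
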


\begin{lem}[{\cite[Lem.~1.1]{i:vs}}]\label{lem:32}
  Let $M \in |\mathcal{M}_{S}|$ be an idempotent right $S$-semimodule.
  Then the relation \[ x \sim y \quad\Longleftrightarrow\quad
  \on{Ann}(x) = \on{Ann}(y) , \]
  where $\on{Ann}(m) := \{s \in S \mid m s = 0_{M} \}$ is the
  annihilator of an element $m \in M$, is a congruence on~$M$.
\end{lem}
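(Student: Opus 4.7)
The plan is to verify directly that $\sim$ is a congruence on $M$, \ie, an equivalence relation compatible with addition and with the action of $S$. Reflexivity, symmetry, and transitivity of $\sim$ are immediate because $\sim$ is defined by equality of the subsets $\on{Ann}(\cdot) \subseteq S$. Thus the whole content of the lemma lies in checking the two compatibility properties.

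Compatibility with scalar multiplication is formal: for any $s \in S$ and any $m \in M$, by associativity of the action
\[ \on{Ann}(m s) \,=\, \{ t \in S \mid m (s t) = 0_{M} \} \,=\, \{ t \in S \mid s t \in \on{Ann}(m) \} , \]
so $\on{Ann}(m s)$ depends on $m$ only through $\on{Ann}(m)$.  Consequently, $x \sim y$ immediately gives $\on{Ann}(x s) = \on{Ann}(y s)$, \ie, $x s \sim y s$.  Note that this step does not use any hypothesis on~$M$.

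Compatibility with addition is where the idempotency of~$M$ is essential.  The plan here is first to establish the identity
\[ \on{Ann}(x + x') \,=\, \on{Ann}(x) \cap \on{Ann}(x') \quad \text{for all } x, x' \in M . \]
The inclusion ``$\supseteq$'' is automatic from $(x + x') t = x t + x' t$.  For ``$\subseteq$'', suppose $(x + x') t = 0_{M}$, \ie, $x t + x' t = 0_{M}$; since~$M$ is idempotent and hence zerosumfree (as recalled in the preliminaries), this forces $x t = 0_{M}$ and $x' t = 0_{M}$, so $t \in \on{Ann}(x) \cap \on{Ann}(x')$.  Once the identity is in hand, if $x \sim y$ and $x' \sim y'$, then
\[ \on{Ann}(x + x') = \on{Ann}(x) \cap \on{Ann}(x') = \on{Ann}(y) \cap \on{Ann}(y') = \on{Ann}(y + y') , \]
\ie, $x + x' \sim y + y'$, as required.

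The main obstacle---really the only nontrivial point---is the reverse inclusion in the annihilator-of-a-sum identity; it is precisely where the hypothesis ``$M$ idempotent'' enters, via zerosumfreeness.  Without this hypothesis $\sim$ would in general only be compatible with the $S$-action, but not necessarily with addition.
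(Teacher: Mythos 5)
Your proof is correct and complete. The paper itself gives no argument for this lemma---it is quoted from Il'in's paper \cite{i:vs} as a known fact---so there is nothing to compare against; your verification (equivalence relation is immediate, compatibility with the $S$-action via $\on{Ann}(ms)=\{t \mid st\in\on{Ann}(m)\}$, and compatibility with addition via $\on{Ann}(x+x')=\on{Ann}(x)\cap\on{Ann}(x')$, whose nontrivial inclusion uses that idempotent semimodules are zerosumfree, exactly as recalled in the paper's preliminaries) is a valid, self-contained proof and correctly isolates where the idempotency hypothesis is used.
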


\begin{prop}\label{prop:33}
  Let~$S$ be a semiring.
  \begin{enumerate}[label=\upshape(\arabic*)]
  \item The endomorphism semiring $\on{End}_{S}(M)$ of any cyclic
    congruence-simple $S$-semimodule $M \in |\mathcal{M}_{S}|$ is
    either a division ring or the Boolean semifield~$\mathbb{B}$.
  \item Let~$I$ be a congruence-simple right ideal of a semiring~$S$
    such that $I = e S$ for some idempotent $e \in S$, and let
    $M \in |\mathcal{M}_{S}|$ be a cyclic congruence-simple
    $S$-semimodule.  Then, $M \cong I$ or $\on{Hom}_{S}(M, I) = 0$.
  \end{enumerate}
\end{prop}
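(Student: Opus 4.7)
The plan is to prove both parts by a Schur-type argument: any nonzero $S$-homomorphism between congruence-simple objects is an isomorphism. For part~(1), I take a nonzero $\phi \in \on{End}_{S}(M)$ and note that its kernel congruence on~$M$ is not universal, so by congruence-simplicity it is trivial and $\phi$ is injective. Lemma~\ref{lem:31} then splits the argument. When~$M$ is a module, the usual Schur argument applies (submodules are automatically subtractive), yielding that $\on{End}_{S}(M)$ is a division ring. When~$M$ is idempotent, I invoke Lemma~\ref{lem:32}: if the annihilator equivalence $\sim$ were universal, every $m \in M$ would satisfy $\on{Ann}(m) = \on{Ann}(0) = S$, forcing $M = 0$; so $\sim$ is trivial, and injectivity of~$\phi$ gives $\on{Ann}(m) = \on{Ann}(\phi(m))$, hence $\phi(m) = m$. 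Thus $\phi = \on{id}_{M}$ and $\on{End}_{S}(M) = \{0, \on{id}_{M}\} \cong \mathbb{B}$.

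For part~(2), I take a nonzero $\phi \in \on{Hom}_{S}(M, I)$ and aim to show that $\phi$ is surjective (injectivity follows as in part~(1)). Lemma~\ref{lem:31} partitions the analysis into four cases according to whether $M$ and $I$ are modules or idempotent. The two ``mixed'' cases force $\on{Hom}_{S}(M, I) = 0$, so the conclusion holds vacuously: when $M$ is a module and $I$ is idempotent (hence zerosumfree), $\phi(m) + \phi(-m) = 0$ in~$I$ kills $\phi$; when $M$ is idempotent and $I$ is a module, the cancellable identity $\phi(m) + \phi(m) = \phi(m)$ in~$I$ does the same. The module--module case is handled by the standard Schur argument on $\on{Im}\phi$, which is a subtractive subsemimodule of~$I$ and therefore equals~$I$.

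The main obstacle is the idempotent--idempotent case. I would consider the Bourne congruence $\equiv_{\phi(M)}$ on~$I$: it cannot be the identity, since then $0 + z = z + 0$ with $z \in \phi(M)$ would collapse $\phi(M)$ to zero; hence, by congruence-simplicity of~$I$, it is universal, and applying it to the pair $(e, 0)$ produces $z_{1}, z_{2} \in \phi(M)$ with $e + z_{1} = z_{2}$. Next I apply part~(1) to the cyclic idempotent congruence-simple semimodule $I = eS$: since its endomorphism semiring is necessarily additively idempotent, it cannot be a division ring, so $\on{End}_{S}(I) \cong \mathbb{B}$; the standard identification $\on{End}_{S}(eS) \cong eSe$ (via $\psi \mapsto \psi(e)$) then yields $eSe = \{0, e\}$. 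Right-multiplying $e + z_{1} = z_{2}$ by~$e$ gives $e + z_{1}e = z_{2}e$, with $z_{1}e, z_{2}e \in \phi(M) \cdot e \subseteq eSe = \{0, e\}$; using idempotency of~$I$, the only way to avoid $e = 0$ is $z_{2}e = e$, whence $e \in \phi(M) \cdot e \subseteq \phi(M)$. Since $\phi(M)$ is an $S$-subsemimodule of $I = eS$, this forces $\phi(M) = I$.
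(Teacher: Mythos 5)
Your proof is correct.  Part~(1) follows the paper's argument almost verbatim --- injectivity of a nonzero endomorphism via its kernel congruence, the Schur argument in the module case, and Lemma~\ref{lem:32} in the idempotent case --- except that you supply a justification the paper leaves implicit, namely that the annihilator congruence of Lemma~\ref{lem:32} must be the identity (were it universal, $\on{Ann}(m)=\on{Ann}(0)=S$ for all $m\in M$ would force $M=0$).  Part~(2) also coincides with the paper up to the point where the universal Bourne congruence $\equiv_{\phi(M)}$ yields $e+z_{1}=z_{2}$ with $z_{1},z_{2}\in\phi(M)$ and one right-multiplies by~$e$; from there the two arguments genuinely diverge.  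The paper finishes with a second appeal to Lemma~\ref{lem:32}, asserting $\on{Ann}(e)=\on{Ann}(a s' e)$ and concluding $e=a s' e\in\phi(M)$, whereas you apply part~(1) to the cyclic congruence-simple semimodule $I=eS$ itself, observe that $\on{End}_{S}(I)$ is additively idempotent and hence must be~$\mathbb{B}$, identify $\on{End}_{S}(eS)$ with $eSe$ to get $eSe=\{0,e\}$, and then read off $z_{2}e=e\in\phi(M)e\subseteq\phi(M)$ from $e+z_{1}e=z_{2}e$ and zerosumfreeness.  Your route is arguably cleaner: it sidesteps the annihilator computation entirely --- in particular the inclusion $\on{Ann}(e)\subseteq\on{Ann}(a s' e)$, which is the less transparent direction of the equality the paper declares ``immediate'' --- at the modest price of exploiting the hypothesis $I=eS$ through the standard isomorphism $\on{End}_{S}(eS)\cong eSe$ and of feeding part~(1) into part~(2).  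Your explicit disposal of the two mixed cases (module versus idempotent target) via zerosumfreeness and additive cancellation is also fine and equivalent to the paper's more compressed treatment, which subsumes the module-source case into the subtractivity argument.
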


\begin{proof}
  (1) By Lemma~\ref{lem:31}, $M$ is either a module or idempotent.  
  If~$M$ is a module, then using similar arguments as in the classical
  Schur lemma \cite[Lem.~1.3.6]{lam:afcinr}, one readily sees that
  $\on{End}_{S}(M)$ is a division ring.

  Now suppose that~$M$ is an idempotent semimodule such that $M = m S$
  for some $0 \ne m \in M$, and let $f \in \on{End}_{S}(M)$ be a
  nonzero endomorphism.  It is clear (see also \cite[Prop.~2.1
  (1)]{aikn:ovsasaowcsai}) that~$f$ is injective.  If $m^{\prime} :=
  f(m)$, then $\on{Ann}(m) = \on{Ann}(m^{\prime})$: Indeed, since~$f$
  is injective, for all $s \in S$, $m s=0$ iff $m^{\prime} s = f(m s)
  = 0$.  And, by Lemma~\ref{lem:32}, $m = m^{\prime} = f(m)$, thus
  $f = \on{id}_{M}$, whence $\on{End}_{S}(M) \cong \mathbb{B}$.

  (2) Assume there exists a nonzero homomorphism $f \colon M \to I$.
  Again, it is clear (see also \cite[Prop.~2.1 (1)]{aikn:ovsasaowcsai})
  that~$f$ is injective.  We claim that~$f$ is also surjective,
  whence $M \cong I$.  If~$M$ is a module, then $0 \ne f(M)$ is a
  subtractive submodule of~$I$, hence $f(M) = I$ as desired.
  Suppose then that~$M$ is not a module, hence~$M$ is idempotent
  by Lemma~\ref{lem:31}.  Then $0 \ne f(M)$ is also idempotent, thus~$I$
  is not a module either, so that~$I$ is idempotent by Lemma~\ref{lem:31}
  as well.  Now, let $M = m S$ for some $0 \ne m \in M$, and $a :=
  f(m) \in I$.  Since $f(M)$ is a nonzero subsemimodule of~$I$, the
  Bourne congruence $\equiv_{f(M)}$ on~$I$ is the universal one.  So,
  $e \equiv_{f(M)} 0$, and hence, $e + a s = a s^{\prime}$ and $e + a
  s e = a s^{\prime} e$ for some $s, s^{\prime}\in S$.  From the
  latter and since~$I$ is idempotent, one immediately gets that
  $\on{Ann}(e) = \on{Ann}(a s^{\prime} e)$; and by Lemma~\ref{lem:32},
  $e = a s^{\prime} e = f(m s^{\prime} e)$, and hence, $f$~is a again
  surjective.
\end{proof}

Recall that a semiring~$S$ is \emph{right (left) congruence-semisimple}
if~$S$ is a direct sum of its congruence-simple right (left) ideals.
Notice that a ring is a right (left) congruence-semisimple ring iff it
is a classical semisimple ring, \ie, it is a direct sum of its minimal
one-sided ideals.  However, in a semiring setting, this fact is not
true in general.  The following theorem, constituting the main result
of this section, gives a full description of all congruence-semisimple
semirings and also demonstrates that the class of congruence-semisimple
semirings is a proper subclass of the class of semisimple semirings.

\begin{thm}\label{thm:34}
  For any semiring~$S$, the following statements are equivalent:
  \begin{enumerate}[label=\upshape(\arabic*)]
  \item $S$ is a right congruence-semisimple semiring;
  \item $S \cong M_{n_{1}}(\mathbb{B}) \times \cdots \times
    M_{n_{k}}(\mathbb{B}) \times M_{m_{1}}(D_{1}) \times \cdots \times
    M_{m_{r}}(D_{r})$, where~$\mathbb{B}$ is the Boolean semifield,
    $D_{1}, \dots, D_{r}$ are division rings, $k \ge 0$, $r \ge 0$, and
    $n_{i}$~$(i = 1, \dots, k)$ and $m_{j}$~$(j = 1, \dots, r)$ are
    positive integers;
  \item $S$ is a left congruence-semisimple semiring.
  \end{enumerate}
\end{thm}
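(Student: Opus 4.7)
The plan is to prove $(1) \Longrightarrow (2) \Longrightarrow (1)$ directly and to deduce $(2) \Longleftrightarrow (3)$ by applying $(2) \Longrightarrow (1)$ to the opposite semiring, noting that the class in~(2) is closed under taking opposites, since $\mathbb{B}$ is commutative and $M_{n}(D)^{\on{op}} \cong M_{n}(D^{\on{op}})$ with $D^{\on{op}}$ again a division ring.

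For $(1) \Longrightarrow (2)$, I write $S_{S} = \bigoplus_{i \in I} I_{i}$ with each $I_{i}$ a congruence-simple right ideal.  Because $1 \in S$ decomposes as a finite sum in this coproduct, only finitely many $I_{i}$ are nonzero, and each summand equals $e_{i} S$ for a family of pairwise orthogonal idempotents with $\smsum e_{i} = 1$.  Grouping the summands by isomorphism class as $P_{1}^{n_{1}} \oplus \dots \oplus P_{t}^{n_{t}}$ and using the standard identification $S \cong \on{End}_{S}(S_{S})^{\on{op}}$ together with Proposition~\ref{prop:33}(2) (which, since each $P_{k}$ is of the form $eS$, forces $\on{Hom}_{S}(P_{k}, P_{l}) = 0$ for $k \neq l$), one gets $S \cong \prod_{k=1}^{t} \on{End}_{S}(P_{k}^{n_{k}})^{\on{op}} \cong \prod_{k=1}^{t} M_{n_{k}}\bigl(\on{End}_{S}(P_{k})\bigr)^{\on{op}}$.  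Proposition~\ref{prop:33}(1) then identifies each $\on{End}_{S}(P_{k})$ with either $\mathbb{B}$ or a division ring, and the closure of this class under opposites delivers~(2).

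For $(2) \Longrightarrow (1)$, since a finite direct product of right congruence-semisimple semirings is plainly right congruence-semisimple, it suffices to treat each factor individually.  The case $M_{m}(D)$ with $D$ a division ring is the classical Wedderburn decomposition into column right ideals.  For $M_{n}(\mathbb{B})$ the same column decomposition works, provided I verify that the $i$-th column right ideal, which as a right $M_{n}(\mathbb{B})$-semimodule is $\mathbb{B}^{n}$ with the natural action, is congruence-simple.  This can be checked directly: any nontrivial congruence identifies two vectors of distinct supports, and right-multiplication by elementary matrices then propagates the identification until the congruence becomes universal; alternatively one can invoke Lemmas~\ref{lem:31} and~\ref{lem:32}.

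The main obstacle will be the endomorphism-ring bookkeeping inside $(1) \Longrightarrow (2)$: one has to justify the Peirce-type splitting $\on{End}_{S}(\bigoplus_{k} P_{k}^{n_{k}}) \cong \prod_{k} M_{n_{k}}\bigl(\on{End}_{S}(P_{k})\bigr)$ in a genuinely semiring setting where some endomorphism semirings fail to be rings, and then to recast the opposite of a matrix semiring over $\mathbb{B}$ or a division ring back into the prescribed form.  Everything else rests on the preparatory Lemmas~\ref{lem:31}--\ref{lem:32} and Proposition~\ref{prop:33}, which were assembled precisely for this purpose.
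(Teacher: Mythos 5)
Your proposal is correct and follows the paper's architecture almost exactly: for $(1)\Rightarrow(2)$ both arguments decompose $S_S$ into finitely many congruence-simple right ideals $e_iS$, group them by isomorphism type, invoke Proposition~\ref{prop:33} to identify each $\on{End}_S(I_i)$ with $\mathbb{B}$ or a division ring and to kill the cross-terms $\on{Hom}_S(I_i,I_j)=0$, and then read off $S$ from the Peirce decomposition of $\on{End}_S(S_S)$; the paper likewise treats the splitting $\on{End}_S(\bigoplus_k P_k^{n_k})\cong\prod_k M_{n_k}(\on{End}_S(P_k))$ as routine once the cross-Homs vanish, so the ``obstacle'' you flag is real but no harder than in the ring case. (The paper omits your $(-)^{\on{op}}$ because it composes endomorphisms of $S_S$ so that left multiplications give $S$ directly; your version with the opposite and the observation $M_n(D)^{\on{op}}\cong M_n(D^{\on{op}})$ is an equivalent bookkeeping choice, and the paper also disposes of $(1)\Leftrightarrow(3)$ ``by symmetry'' just as you do.) The one genuine divergence is in $(2)\Rightarrow(1)$: to see that the column ideals $e_{ii}M_n(\mathbb{B})\cong\mathbb{B}^n$ are congruence-simple, the paper transports congruence-simplicity of $\mathbb{B}_{\mathbb{B}}$ across the Morita equivalence $\mathcal{M}_{M_n(K)}\simeq\mathcal{M}_K$ of \cite[Thm.~5.14]{kat:thcos}, whereas you verify it by hand: if a congruence identifies $u\neq v$ in $\mathbb{B}^n$, pick $j$ with $u_j=1$, $v_j=0$, multiply by $E_{jk}$ to get $e_k\sim 0$ for all $k$, and conclude the congruence is universal. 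Your direct check is more elementary and self-contained; the paper's route buys uniformity (the same one-line argument covers $K$ a division ring and $K=\mathbb{B}$ simultaneously) at the cost of importing the Morita machinery.
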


\begin{proof}
  $(1) \!\Longrightarrow\! (2)$.  Let~$S$ be a right congruence-semisimple
  semiring, thus~$S$ is a finite direct sum of its congruence-simple 
  right ideals.  By applying Lemma~\ref{lem:31} and grouping those summands
  according to their isomorphism types as right $S$-semimodules, we
  obtain \[ S_{S} \,\cong\, I_{1}^{n_{1}} \oplus \cdots \oplus I_{k}^{n_{k}}
  \oplus I_{k+1}^{m_{1}} \oplus \cdots \oplus I_{k+r}^{m_{r}} \,, \]
  with mutually nonisomorphic congruence-simple right ideals $I_{1},
  \dots, I_{k+r}$ of~$S$, where the~$I_{i}$ for $i = 1, \dots, k$ are
  additively idempotent $S$-semimodules and the additive reducts of
  $I_{i}$ for $i = k+1, \dots, k+r$ are abelian groups.

  Notice that each $I_{i}$ for $i = 1, \dots, k+r$ is a direct summand
  of $S_{S}$, so $I_{i} = e_{i} S$ for some idempotent $e_{i} \in S$.
  By Proposition~\ref{prop:33}, $\on{End}_{S}(I_{i}) \cong \mathbb{B}$ for 
  $i = 1, \dots, k$, $D_{j} := \on{End}_{S}(I_{k+j})$ for $j = 1, \dots, r$
  are division rings, and $\on{Hom}_{S}(I_{i}, I_{j}) = 0$ for all
  distinct $i, j = 1, \dots, k + r$.

  Since elements of $\on{End}_{S}(S_{S})$ are presented by
  multiplications on the left by elements of~$S$, and 
  as $\on{Hom}_{S}(I_{i}, I_{j}) = 0$ for $i \ne j$, we infer
  \begin{align*}
    S &\cong \on{End}_{S}(S_{S}) \cong 
    \on{End}_{S}(I_{1}^{n_{1}} \oplus \cdots \oplus I_{k}^{n_{k}}
    \oplus I_{k+1}^{m_{1}} \oplus \cdots \oplus I_{k+r}^{m_{r}}) \\ 
    &\cong \on{End}_{S}(I_{1}^{n_{1}}) \times \cdots \times
    \on{End}_{S}(I_{k}^{n_{k}}) \times \on{End}_{S}(I_{k+1}^{m_{1}})
    \times \cdots \times \on{End}_{S}(I_{m+r}^{m_{r}}) \,.
  \end{align*}
  Finally, by noting that $\on{End}_{S}(M^{m}) \cong M_{m}(\on{End}_{S}(M))$
  for any $M \in |\mathcal{M}_{S}|$ and postive integer~$m$, we conclude
  that \[ S \cong M_{n_{1}}(\mathbb{B}) \times \cdots \times 
  M_{n_{k}}(\mathbb{B}) \times M_{m_{1}}(D_{1}) \times \cdots 
  \times M_{m_{r}}(D_{r}) \,. \]

  $(2) \!\Longrightarrow\! (1)$.  It suffices to show the
  congruence-semisimpleness of a matrix semiring $S := M_{n}(K)$ 
  with~$K$ to be either a division ring or the Boolean semifield.  
  To this end, let $e_{ii}$ for $i = 1, \dots, n$ be the matrix units in
  $M_{n}(K)$, so that $S = e_{11} S \oplus \cdots \oplus e_{nn} S$ with
  $e_{ii} S \cong K^{n}$ as right $S$-semimodules for each~$i$.
  As was shown in \cite[Thm.~5.14]{kat:thcos}, the functors $F :
  \mathcal{M}_{S} \leftrightarrows \mathcal{M}_{K} : G$ given by $F(A)
  = A e_{11}$ and $G(B) = B^{n}$ establish an equivalence of the
  semimodule categories~$\mathcal{M}_{S}$ and~$\mathcal{M}_{K}$.
  Therefore, taking into consideration that~$K$ is a congruence-simple
  right $K$-semimodule and \cite[Lem.~3.8]{aikn:ovsasaowcsai}, we have
  that each $e_{ii} S \cong K^{n} = G(K)$ for $i = 1, \dots, n$ is a
  congruence-simple right $S$-semimodule as well, whence~$S$ is a
  right congruence-semisimple semiring.

  The equivalence $(1) \!\Longleftrightarrow\! (3)$ follows by symmetry.
\end{proof}

Recently in \cite{ikr:domlzs}, introducing and studying the
\emph{decomposition socle} for semimodules over zerosumfree semirings,
the authors characterize zerosumfree semirings~$R$ such that the
regular semimodule~$R_{R}$ is a finite direct sum of indecomposable
projective $R$-subsemimodules (see \cite[Thm.~3.3]{ikr:domlzs}).  In
light of this and as a corollary of Theorem~\ref{thm:34}, we note a
description of such semirings in the class of congruence-semisimple
semirings as matricial algebras---a central subject of our
considerations in the following sections---over the Boolean
semifield.

\begin{cor}\label{cor:35}
  For any zerosumfree semiring~$S$, the following are equivalent:
  \begin{enumerate}[label=\upshape(\arabic*)]
  \item $S$ is a right congruence-semisimple semiring;
  \item $S \cong M_{n_{1}}(\mathbb{B}) \times \cdots \times
    M_{n_{k}}(\mathbb{B})$, with~$\mathbb{B}$ the Boolean semifield
    and $k, n_{1}, \dots, n_{k}$ some positive integers;
  \item $S$ is a left congruence-semisimple semiring.
  \end{enumerate}
\end{cor}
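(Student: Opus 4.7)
The plan is to derive the corollary from Theorem~\ref{thm:34} by showing that zerosumfreeness forces all ``ring-type'' factors to vanish, leaving only matrix algebras over the Boolean semifield~$\mathbb{B}$.

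For the implication $(1) \!\Longrightarrow\! (2)$, I would begin by invoking Theorem~\ref{thm:34} to obtain a decomposition
\[ S \,\cong\, M_{n_{1}}(\mathbb{B}) \times \cdots \times M_{n_{k}}(\mathbb{B}) \times M_{m_{1}}(D_{1}) \times \cdots \times M_{m_{r}}(D_{r}) \,, \]
with each $D_{j}$ a division ring.  The key observation is that the property of being zerosumfree is inherited by direct factors: if $A \times B$ is zerosumfree and $a_{1} + a_{2} = 0$ in~$A$, then $(a_{1},0) + (a_{2},0) = 0$ in $A \times B$, forcing $a_{1} = a_{2} = 0$.  Hence each factor $M_{m_{j}}(D_{j})$ is zerosumfree.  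But $M_{m_{j}}(D_{j})$ is a nonzero ring (since $D_{j}$ is a division ring and $m_{j} \ge 1$), so its identity matrix~$I$ satisfies $I + (-I) = 0$ with $-I \ne 0$, contradicting zerosumfreeness.  Therefore $r = 0$, yielding the desired form in~(2).

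For $(2) \!\Longrightarrow\! (1)$, one notes that $\mathbb{B}$ is zerosumfree, hence so is every matrix semiring $M_{n_{i}}(\mathbb{B})$ and their finite direct product~$S$; the congruence-semisimpleness is then immediate from the direction $(2) \!\Longrightarrow\! (1)$ of Theorem~\ref{thm:34}.  Finally, the equivalence $(1) \!\Longleftrightarrow\! (3)$ follows by the corresponding symmetry built into Theorem~\ref{thm:34}, since the structural form in~(2) is left--right symmetric.

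The only nontrivial step is eliminating the division-ring factors in $(1) \!\Longrightarrow\! (2)$, and the argument above handles this cleanly via the inheritance of zerosumfreeness under direct products; no deeper obstacle is expected.
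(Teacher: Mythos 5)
Your proof is correct and follows exactly the route the paper intends: the corollary is stated without proof as an immediate consequence of Theorem~\ref{thm:34}, and the only content to supply is that zerosumfreeness passes to direct factors and rules out the nonzero ring factors $M_{m_{j}}(D_{j})$ via $I + (-I) = 0$, which is precisely what you do. The remaining implications are, as you say, read off directly from Theorem~\ref{thm:34}.
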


\section{Strongly projective semimodules}\label{sec:4}

In~\cite{ikr:domlzs}, a very natural variation of the concept of a
projective semimodule has been introduced in a semiring setting, which
we here call ``strongly projective semimodule''.  In the present
section, we thoroughly investigate such kind of semimodules over
general semirings, semifields and semisimple semirings.

\begin{defn}[{cf.~\cite[Def.~3.1]{ikr:domlzs}}]
  A semimodule $P \in |\mathcal{M}_{S}|$ is \emph{(finitely generated)
    strongly projective} if it is isomorphic to a direct summand of a
  (finitely generated) free right $S$-semimodule.
\end{defn}

\begin{rem}\label{rem:42}
  We note a few easy facts on strongly projective semimodules.
  \begin{enumerate}[label=(\arabic*)]
  \item Any (finitely generated) strongly projective semimodule is a
    (finitely generated) projective semimodule as well, and the
    concepts of ``projectivity'' and ``strong projectivity'' for
    modules over rings coincide.
  \item A strongly projective semimodule $P \in |\mathcal{M}_{S}|$ over
    a zerosumfree semiring~$S$ is zerosumfree as well.
  \item Let $(P_{i})_{i\in I}$ be a family of right $S$-semimodules.
    Then, the right $S$-semimodule $\bigoplus_{i\in I} P_{i}$ is strongly
    projective iff $P_{i}$ is strongly projective for all $i \in I$.
  \end{enumerate}
\end{rem}

The next observation provides a simple criterion for (strong) projectivity.

\begin{lem}\label{lem:43}
  A finitely generated right $S$-semimodule~$P$ is (strongly)
  projective iff there exist a positive integer~$n$ and a (strongly)
  idempotent matrix $A\in M_{n}(S)$ such that $A(S^{n})\cong P$, where
  $A(S^{n})$ is the subsemimodule of the right $S$-semimodule~$S^{n}$
  generated by all column vectors of~$A$.
\end{lem}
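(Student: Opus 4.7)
The plan is to adapt the standard ring-theoretic correspondence between direct summands of $S^{n}$ and idempotent $n \times n$ matrices, tracking carefully the difference between a retract and a direct summand in the semimodule setting. Throughout, I would use the canonical isomorphism $\on{End}_{S}(S^{n}) \cong M_{n}(S)$ via left multiplication on column vectors, writing $\tilde A \colon S^{n} \to S^{n}$, $v \mapsto Av$, for the endomorphism associated with a matrix $A \in M_{n}(S)$, whose image is exactly $A(S^{n})$.

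For the ``if'' direction, suppose first that $A$ is idempotent. Then $\tilde A$ is an idempotent endomorphism, so the surjection $\tilde A \colon S^{n} \twoheadrightarrow P := A(S^{n})$ together with the inclusion $P \hookrightarrow S^{n}$ exhibits $P$ as a retract of the free semimodule $S^{n}$, making $P$ projective. If in addition $A$ is a strong idempotent with complement $B$, that is, $A + B = I_{n}$, $AB = BA = 0$, and $B^{2} = B$, then I would verify that the map $v \mapsto (Av, Bv)$ is an isomorphism $S^{n} \cong A(S^{n}) \oplus B(S^{n})$: surjectivity and the decomposition $v = Av + Bv$ are immediate from $A + B = I_{n}$, while the composite $(Ax, By) \mapsto Ax + By \mapsto \bigl(A(Ax + By), B(Ax + By)\bigr)$ returns $(Ax, By)$ after applying $A^{2} = A$, $B^{2} = B$, and $AB = BA = 0$. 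Thus $P = A(S^{n})$ is a direct summand of $S^{n}$, hence strongly projective.

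For the ``only if'' direction, the strongly projective case is immediate from the very definition: a finitely generated strongly projective $P$ is isomorphic to a direct summand $P'$ of some $S^{n}$, say $S^{n} = P' \oplus Q$, and the two projections correspond under $\on{End}_{S}(S^{n}) \cong M_{n}(S)$ to a strong-idempotent pair $A, B$ with $A(S^{n}) = P' \cong P$. For the merely projective case, finite generation of $P$ provides a surjection $h \colon S^{n} \twoheadrightarrow P$; the retract-of-free definition of projectivity readily yields the lifting property in $\mathcal{M}_{S}$, producing a section $\sigma \colon P \to S^{n}$ with $h \sigma = \on{id}_{P}$. Then $e := \sigma h$ is an idempotent endomorphism of $S^{n}$, and its matrix $A$ satisfies $A(S^{n}) = \sigma(P) \cong P$, the last isomorphism because any section is injective. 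The only point requiring semimodule-specific care, and what I expect to be the main obstacle, is the uniqueness step inside the direct-sum decomposition above: lacking subtraction, the argument must proceed by multiplying by the complementary idempotent to annihilate the opposite summand, which is precisely the reason one must explicitly demand $B$ itself to be idempotent in the definition of a strong idempotent (a condition automatic over rings but not over semirings).
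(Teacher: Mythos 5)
Your proposal is correct and follows essentially the same route as the paper's proof: both directions rest on the canonical identification $\on{End}_{S}(S^{n})\cong M_{n}(S)$, with the composite $g\circ f$ (resp.\ the pair of complementary projections of a direct-sum decomposition) giving the (strongly) idempotent matrix, exactly as in the paper. The only difference is one of emphasis --- you spell out the ``if'' direction, which the paper dismisses as obvious --- and your closing remark about why the complementary idempotent must itself be required to be idempotent over a semiring is a sound observation consistent with the paper's definition of a strong idempotent.
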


\begin{proof}
  If~$P$ is a finitely generated projective right $S$-semimodule,
  then there is some positive integer~$n$ and a homomorphism
  $f \colon S^n \to P$ with a right inverse homomorphism
  $g \colon P \to S^n$, \ie, $f \circ g = \on{id}_P$.  Then $\alpha
  := g \circ f \colon S^n \to S^n$ is an idempotent endomorphism
  with $\alpha(S^n) \cong P$.  If the semimodule~$P$ is in addition
  strongly projective, there exists, for some~$n$ and some
  right $S$-semimodule~$P'$, an isomorphism $S^n \to P \oplus P'$.
  Hence, the corresponding projections are homomorphisms $f \colon S^n
  \to P$, $f' \colon S^n \to P'$ with right inverses $g \colon P \to S^n$,
  $g' \colon P \to S^n$, such that $\alpha := g \circ f$ and $\alpha'
  := g' \circ f'$ are idempotent endomorphisms of~$S^n$ satisfying
  $\alpha + \alpha' = \on{id}_{S^n}$ and $\alpha \circ \alpha' = 0
  = \alpha' \circ \alpha$.  Applying now the standard interpretation
  of endomorphisms of the free right $S$-semimodule~$S^n$ as
  $n \times n$ matrices over~$S$ yields a (strongly) idempotent
  matrix $A \in M_n(S)$ such that $A(S^n) \cong P$.  The converse
  direction is obvious.
\end{proof}

A description of strongly projective semimodules over semifields is
our next goal, for which we need the following useful fact.  Recall
first (see, \eg, \cite[p.~154]{golan:sata}) that a subsemimodule~$K$
of a semimodule $M \in |\mathcal{M}_{S}|$ is said to be \emph{strong}
if $m + m' \in K$ implies $m, m' \in K$ for all $m, m' \in M$.

\begin{prop}\label{prop:44}
  Let $M = \bigoplus_{i\in I} T_i \in |\mathcal{M}_{S}|$ be a direct
  sum of minimal $S$-subsemimodules $T_i \in |\mathcal{M}_{S}|$.
  Then, for every strong subsemimodule $K \subseteq M$, there exists
  a subset $I_K \subseteq I$ such that
  \[ M = K \oplus \big( \textstyle\bigoplus\limits_{i \in I_K} T_i \big)
  \quad \text{ and } \quad
  K \cong \textstyle\bigoplus\limits_{i \in I \setminus I_K} T_i . \]
\end{prop}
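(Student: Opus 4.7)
The plan is to exploit strongness of $K$ to show that $K$ itself decomposes as a direct sum of a subfamily of the $T_i$. Specifically, I set $I_K := \{i \in I : T_i \not\subseteq K\}$ and aim to prove $K = \bigoplus_{i \in I \setminus I_K} T_i$; both the decomposition $M = K \oplus \bigoplus_{i \in I_K} T_i$ and the isomorphism $K \cong \bigoplus_{i \in I \setminus I_K} T_i$ then follow immediately, since $M = \bigoplus_{i \in I} T_i$ and subsums of a direct sum are automatically direct inside~$M$.

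The crux of the argument is the following claim: for every $k \in K$, if $k = t_{i_1} + \cdots + t_{i_n}$ is its (unique, finite) decomposition in the ambient direct sum $\bigoplus_{i \in I} T_i$, then each summand $t_{i_j}$ already lies in~$K$. I would prove this by induction on~$n$; the inductive step writes $k = t_{i_1} + (t_{i_2} + \cdots + t_{i_n})$ and invokes strongness of~$K$ once to conclude both $t_{i_1} \in K$ and $t_{i_2} + \cdots + t_{i_n} \in K$, after which the induction hypothesis handles the shorter sum. From this claim, each $t_{i_j}$ lies in $K \cap T_{i_j}$, and minimality of $T_{i_j}$ forces $K \cap T_{i_j} \in \{0, T_{i_j}\}$, so every nonzero component must satisfy $T_{i_j} \subseteq K$, \ie, $i_j \in I \setminus I_K$. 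Hence $K \subseteq \bigoplus_{i \in I \setminus I_K} T_i$, and the reverse inclusion is immediate from the very definition of $I \setminus I_K$.

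I do not anticipate a substantive obstacle: the strategy is structurally the classical proof for semisimple modules, with the strongness of~$K$ playing the role usually played by subtraction. The example $K = \{0,(1,1)\} \subset \mathbb{B}^{2}$ (where $(1,0) + (0,1) \in K$ but neither summand lies in $K$, and where no complementary direct summand exists) shows that strongness is indispensable; this is reassuring since the hypothesis is invoked at exactly one step in the induction above, and that step is precisely where a non-strong $K$ would collapse the argument.
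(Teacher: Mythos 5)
Your proof is correct, and it takes a genuinely different route from the paper's. The paper follows the classical semisimple-module template: it uses Zorn's lemma to pick a maximal subset $I_K\subseteq I$ with $K\cap\bigl(\bigoplus_{i\in I_K}T_i\bigr)=0$, invokes strongness and minimality to show $K+\bigoplus_{i\in I_K}T_i=M$ and that this sum is direct, and finally obtains $K\cong\bigoplus_{i\in I\setminus I_K}T_i$ only up to isomorphism, by passing to the quotient $M/\bigoplus_{i\in I_K}T_i$ via the Bourne congruence. You instead take $I_K=\{i: T_i\not\subseteq K\}$ and prove the stronger statement that $K$ \emph{equals} $\bigoplus_{i\in I\setminus I_K}T_i$ on the nose: your induction correctly shows that strongness forces every component $t_{i_j}$ of an element of $K$ into $K$, and minimality upgrades $0\ne t_{i_j}\in K\cap T_{i_j}$ to $T_{i_j}\subseteq K$, giving $K\subseteq\bigoplus_{i\in I\setminus I_K}T_i$; the reverse inclusion and the complementary decomposition of $M$ are then immediate. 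What your approach buys is the elimination of Zorn's lemma and of the quotient step, plus the sharper conclusion that a strong subsemimodule of a semisimple semimodule is literally a subsum of the given decomposition (a phenomenon specific to the zerosumfree setting --- over a ring the only strong submodule is $M$ itself, so the strengthening is not vacuous but also not visible classically). What the paper's approach buys is closer alignment with the standard module-theoretic argument it generalizes. Your counterexample $K=\{0,(1,1)\}\subset\mathbb{B}^2$ correctly isolates where strongness is needed.
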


\begin{proof}
  By Zorn's lemma, there exists a maximal subset $I_K \subseteq I$
  satisfying the property $K \cap (\bigoplus_{i \in I_K} T_i) = 0$; let
  $N := K + (\bigoplus_{i\in I_K} T_i)$.  We claim that $M = N$.  Indeed,
  for any $i \in I$, as~$T_i$ is minimal, we have either $N \cap T_i = 0$
  or $N \cap T_i = T_i$.  Suppose that $N \cap T_i = 0$ for some $i \in I$.
  Then let $J := I_K \cup \{ i \}$, and for any $x \in K \cap
  (\bigoplus_{i\in J}T_i)$ we can write $x = x_1 + x_2$ with $x_1 \in
  \bigoplus_{i\in I_K}T_i$ and $x_2 \in T_i$.  Since~$K$ is strong, we get
  that $x_1, x_2 \in K$, and thus $x_1\in K \cap (\bigoplus_{i\in I_K}T_i)$
  and $x_2 \in K \cap T_i$.  From the latter one has $x_1 = 0 = x_2$ and
  thus $x = 0$.  Thus, $K\cap (\bigoplus_{i\in J} T_i) = 0$, contradicting
  the maximality of the subset $I_K \subseteq I$.  Therefore,
  $N \cap T_i = T_i$ for all $i \in I$, whence $M = N$.

  Now let $x_1 + x_2 = x_1' + x_2'$, where $x_1, x_1' \in K$
  and $x_2, x_2' \in \bigoplus_{i \in I_K} T_i$.  Using the direct sum
  \[\tag{$\ast$} M \,=\, \big( \textstyle\bigoplus\limits_{i\in I\setminus I_K}
  T_i \big) \oplus \big( \textstyle\bigoplus\limits_{i\in I_K} T_i \big) , \]
  we can write $x_1 = y_1 + y_2$ and $x_1' = y_1' + y_2'$ with
  $y_1, y_1' \in \bigoplus_{i \in I \setminus I_K} T_i$ and
  $y_2, y_2' \in \bigoplus_{i \in I_K} T_i$.  As~$K$ is strong, we
  infer that $y_2, y_2' \in K \cap ( \bigoplus_{i \in I_K} T_i ) = 0$,
  thus $x_1 = y_1$ and $x_1' = y_1'$, so that $y_1 + x_2 = y_1' + x_2'$.
  By $(\ast)$ this implies $x_1 = y_1 = y_1' = x_1'$ and $x_2 = x_2'$,
  whence $M = K \oplus (\bigoplus_{i \in I_K} T_i)$.  The direct sum
  $(\ast)$ also shows, using the corresponding Bourne congruence,
  that $K \cong M / \bigoplus_{i \in I_K} T_i \cong
  \bigoplus_{i \in I \setminus I_K} T_i$.
\end{proof}

\begin{thm}\label{thm:45}
  Every strongly projective right~$D$-semimodule over a division
  semiring~$D$ is free.  In particular, a finitely generated
  semimodule $P \in |\mathcal{M}_{D}|$ is strongly projective if and
  only if there exists a unique nonnegative integer~$n$ such that
  $P \cong D^n$.
\end{thm}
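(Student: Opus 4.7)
The plan is to show first that every strongly projective $D$-semimodule is free, after which the ``in particular'' characterization follows at once with IBN. So let $P \in |\mathcal{M}_{D}|$ be strongly projective, meaning $P \oplus Q \cong F = \bigoplus_{i \in I} D_{i}$ for some free $F$ with each $D_{i} \cong D_{D}$. My first move is to exploit the dichotomy that $D$ is either a division ring or zerosumfree: if nonzero $a, b \in D$ satisfy $a + b = 0$, then $c := a^{-1}b$ satisfies $1 + c = 0$, and $x + xc = x(1+c) = x \cdot 0 = 0$ supplies additive inverses in every $D$-semimodule, turning $D$ into a division ring. In the division-ring case $P$ is then a projective module over a division ring, hence free by classical dimension theory, so the substantive case is that $D$ is zerosumfree.

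In that case $F$, and hence its subsemimodule $Q$, are zerosumfree, and I would first verify that $P$ is a \emph{strong} subsemimodule of $F$: given $x, y \in F$ with $x + y \in P$, the unique decompositions $x = p_{1} + q_{1}$, $y = p_{2} + q_{2}$ together with the uniqueness of the representation of $x+y = (x+y) + 0 \in P \oplus Q$ force $q_{1} + q_{2} = 0$, and the zerosumfree-ness of $Q$ then yields $q_{1} = q_{2} = 0$, so $x, y \in P$. Since $D$ is a division semiring, the only subsemimodules of $D_{D}$ are $\{0\}$ and $D$ (any nonzero right ideal contains some $a \neq 0$, hence $1 = a \cdot a^{-1}$), so each summand $D_{i}$ of $F$ is minimal as a $D$-subsemimodule. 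Applying Proposition~\ref{prop:44} with $M = F$ and $K = P$ then yields $P \cong \bigoplus_{i \in I \setminus I_{P}} D_{i}$, which is free.

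For the ``in particular'' statement, a free $D$-semimodule is finitely generated iff its basis is finite, so $P \cong D^{n}$ for some $n \ge 0$; uniqueness of $n$ reduces to the IBN property of $D$. This is classical in the division-ring case, and in the zerosumfree case it follows via base change along the semiring homomorphism $D \to \mathbb{B}$ that sends every nonzero element to $1$---well-defined since a zerosumfree division semiring is entire and sums of nonzero elements remain nonzero---together with the obvious IBN of $\mathbb{B}$, since $|\mathbb{B}^{n}| = 2^{n}$.

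The main obstacle I anticipate is the verification of the strong-subsemimodule condition in the zerosumfree case: Proposition~\ref{prop:44} requires strongness of $K$, and in the absence of additive inverses this has to be squeezed out of the fact that the direct-summand complement $Q$ is zerosumfree. Once this is in hand, the machinery of Proposition~\ref{prop:44} applied to the canonical minimal-summand decomposition of $F$ delivers freeness essentially for free.
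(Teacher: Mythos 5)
Your proposal is correct and follows essentially the same route as the paper's proof: reduce via the division ring/zerosumfree dichotomy, verify that $P$ is a strong subsemimodule of the ambient free semimodule using zerosumfreeness, apply Proposition~\ref{prop:44} to the decomposition of $F$ into minimal summands $D_i \cong D$, and settle uniqueness of $n$ by the IBN property (your $\mathbb{B}$-valued homomorphism argument is exactly the one the paper uses in Corollary~\ref{cor:52}). The only cosmetic difference is that you invoke zerosumfreeness of $Q$ where the paper invokes that of $F$, which is the same fact.
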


\begin{proof}
  It is clear that~$D$ is either a division ring or a zerosumfree
  division semiring.  Also, the statement is the well-known
  ``classical'' result when~$D$ is a division ring.  So let~$D$ be a
  zerosumfree division semiring and let $P \in |\mathcal{M}_{D}|$ be a
  strongly projective semimodule.  There is a free semimodule
  $F \in |\mathcal{M}_{D}|$, which obviously is zerosumfree, such that
  $F = P \oplus Q$ for some semimodule $Q \in |\mathcal{M}_{D}|$,
  and we claim that~$P$ is a strong subsemimodule of~$F$.  Indeed, let
  $x, y \in F$ such that $x + y \in P$.  Then, $x = p + q$ and
  $y = p' + q'$ for some $p, p'\in P$ and $q, q'\in Q$, hence
  \[ x + y = (p + p') + (q + q') \in P \oplus Q \,. \]
  Therefore, we have that $q +q' = 0$, so that $q =0 = q'$, since~$F$
  is zerosumfree.  This implies that $x = p \in P$ and $y = p' \in P$,
  and thus~$P$ is strong.

  Now noticing that $F = \bigoplus_{i\in I} D_{i}$, where
  $D_{i} \cong D$ for all $i \in I$, is a direct sum of minimal
  right $D$-subsemimodules, by applying Proposition~\ref{prop:44} we get
  that $P \cong \bigoplus_{i\in J} D_{i}$ for some subset $J \subseteq I$,
  whence $P$~is a free semimodule.

  If $P \in |\mathcal{M}_{D}|$ is a finitely generated strongly
  projective semimodule, then there exists a positive integer~$m$ such
  that $D^{m} = P \oplus Q$ for some $Q \in |\mathcal{M}_{D}|$.  By
  the observation above, there exists a nonnegative integer $n \le m$
  such that $P \cong D^n$.  The uniqueness of such a number~$n$ 
    follows from the IBN property of division semirings (see
  \cite[Thm.~5.3]{hebwei:otrosos} or Corollary~\ref{cor:52} below).
\end{proof}

Next we illustrate that the concepts of ``projectivity'' and ``strong
projectivity'' for semimodules, in general, are quite different.  It is
clear that any $\mathbb{B}$-semimodule $M \in |\mathcal{M}_{\mathbb{B}}|$
is an idempotent semimodule and an upper semilattice under the partial
ordering~$\le$ on~$M$ defined for any two elements $x, y \in M$ by
$x \le y$ iff $x + y = y$.  Let us recall the following projectivity
criterion for $\mathbb{B}$-semimodules.

\begin{fac}[{\cite[Thm.~5.3]{hk:tcos}}]\label{fac:46}
  A $\mathbb{B}$-semimodule~$M$ is projective if and only if~$M$ is a
  distributive lattice and $\{ m \in M \mid m \le x \}$ is finite for
  all $x \in M$.
\end{fac}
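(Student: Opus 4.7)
The plan is to exploit the concrete description of a free $\mathbb{B}$-semimodule $F = \mathbb{B}^{(I)}$ as the distributive lattice of finite subsets of~$I$ under union: the order~$\le$ coincides with set inclusion, joins are unions, meets are intersections, and the principal down-set $\{y \in F : y \le x\}$ of any $x \in F$ is the (finite) power set of~$x$.  Both directions are then obtained by passing between~$M$ and a suitable free $\mathbb{B}$-semimodule.

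For the necessity, suppose~$M$ is projective, so that there exist $\mathbb{B}$-semimodule homomorphisms $\pi \colon F \to M$ and $\sigma \colon M \to F$ with $\pi \circ \sigma = \on{id}_{M}$.  Setting $e := \sigma \circ \pi$ gives an idempotent endomorphism of~$F$ with image a copy of~$M$; identifying $M = \sigma(M) \subseteq F$, one has $e|_{M} = \on{id}_{M}$, and the order on~$M$ agrees with the restriction of the order on~$F$.  Hence every principal down-set in~$M$ sits inside the corresponding (finite) down-set in~$F$ and is itself finite.  Moreover, for $x, y \in M$ the finite set $D := \{z \in M : z \le x,\ z \le y\}$ admits the sum $\sum_{z \in D} z \in M$, which is the greatest lower bound $x \wedge y$ of~$x$ and~$y$ in~$M$, so~$M$ is a lattice.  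For distributivity, whenever $s \le x$ and $s \le y + z$ in~$M$, the distributivity of~$F$ gives $s = (s \wedge^{F} y) + (s \wedge^{F} z)$; applying the order-preserving idempotent~$e$ yields $s = e(s \wedge^{F} y) + e(s \wedge^{F} z)$, and each summand $e(s \wedge^{F} y) \in M$ satisfies $e(s \wedge^{F} y) \le e(x) = x$ and $e(s \wedge^{F} y) \le e(y) = y$, hence is bounded by $x \wedge y$ in~$M$, so $s \le (x \wedge y) + (x \wedge z)$.

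For the sufficiency, let~$M$ be a distributive lattice with finite principal down-sets and let $J := J(M)$ be the set of nonzero join-irreducibles of~$M$.  Distributivity forces each $j \in J$ to be join-prime: from $j \le a + b$ one gets $j = (j \wedge a) + (j \wedge b)$, and join-irreducibility gives $j \le a$ or $j \le b$.  Define $\pi \colon \mathbb{B}^{(J)} \to M$ by $\pi(e_{j}) := j$ and $\sigma \colon M \to \mathbb{B}^{(J)}$ by $\sigma(m) := \sum\{e_{j} : j \in J,\ j \le m\}$, a finite sum because $\{j \in J : j \le m\}$ is contained in the finite principal down-set of~$m$.  Join-primality makes~$\sigma$ a $\mathbb{B}$-semimodule homomorphism, and Birkhoff's theorem applied to the finite distributive sublattice $\{z \in M : z \le m\}$ gives $\pi(\sigma(m)) = \bigvee\{j \in J : j \le m\} = m$, realizing~$M$ as a retract of the free $\mathbb{B}$-semimodule~$\mathbb{B}^{(J)}$.

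The main technical obstacle will be distributivity in the forward direction: meets inside~$M$ can differ from meets inside~$F$, so the argument hinges on using the retracting idempotent~$e$ to transfer the $F$-distributivity decomposition of~$s$ back into~$M$ in an order-compatible way.
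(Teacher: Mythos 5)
Your proof is correct. Note, however, that the paper does not prove this statement at all: it is imported as a \emph{Fact} with a citation to Horn--Kimura \cite{hk:tcos}, so there is no in-paper argument to compare against. What you have supplied is a sound, self-contained proof along the standard Birkhoff-representation lines, and both directions check out. In the necessity direction, the one point you leave implicit is that $\sigma$ is an order-embedding (it is injective since $\pi\sigma=\on{id}_M$, and an injective homomorphism of idempotent monoids reflects the order because $\sigma(m)+\sigma(m')=\sigma(m')$ forces $m+m'=m'$); this is what licenses identifying $\le$ on $M$ with the restriction of $\le$ on $F$ and hence the finiteness of down-sets, and your subsequent use of the retraction $e=\sigma\pi$ to push the $F$-meet decomposition $s=(s\wedge^F y)+(s\wedge^F z)$ back into $M$ is exactly the right device, since $e$ is order-preserving, fixes $M$ pointwise, and each image $e(s\wedge^F y)$ is then a common lower bound of $x$ and $y$ inside $M$, hence below $x\wedge_M y$. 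In the sufficiency direction, join-irreducible implies join-prime via distributivity as you say, and the identity $m=\bigvee\{j\in J: j\le m\}$ actually needs only the finiteness of principal down-sets (induction on $|\{z\le m\}|$), not the full force of Birkhoff's theorem; invoking Birkhoff on the finite down-set is harmless but slightly more than required. With those two routine points made explicit, the argument is complete and gives a legitimate proof of the cited result.
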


By applying Theorem~\ref{thm:45} and Fact~\ref{fac:46} it is fairly
easy to provide counterexamples demonstrating the difference of the
concepts of ``projectivity'' and ``strong projectivity'' for general
semimodules.

\begin{exa}\label{exa:47}
  Consider the subsemimodule $P_{\mathbb{B}} := \{ (0, 0), (0, 1), (1, 1) \}$
  of the free semimodule $\mathbb{B}^{2} \in |\mathcal{M}_{\mathbb{B}}|$.
  By Fact~\ref{fac:46}, the semimodule~$P$ is finitely generated projective.
  However, it is obvious that there is no positive integer~$n$ with
  $P \cong \mathbb{B}^{n}$, and therefore, by Theorem~\ref{thm:45},
  $P$ is not a strongly projective $\mathbb{B}$-semimodule.
\end{exa}

Now let us consider strongly projective semimodules under a change
of semirings.  We need the following two functors introduced in
\cite{kat:thcos}.  Given any semirings~$R, S$ and a homomorphism
$\pi \colon R \to S$, every right $S$-semimodule $B_{S}$ may be
considered as a right $R$-semimodule by \emph{pullback} along~$\pi$,
\ie, by defining $b \cdot r := b \cdot \pi (r)$ for any $b \in B$,
$r \in R$.  The resulting $R$-semimodule is written $\pi^{\#} B$, and
it is easy to see that the assignment $B \mapsto \pi^{\#} B$ naturally
constitutes a \emph{restriction} functor $\pi^{\#} \colon \mathcal{M}_{S}
\to \mathcal{M}_{R}$.  The restriction functor $\pi^{\#}$ for left
semimodules is similarly defined.  In particular, the restriction
functor $\pi^{\#} \colon {}_{S} \mathcal{M} \to {}_{R} \mathcal{M}$,
applied to the left $S$-semimodule ${}_{S} S$, gives the
$R$-$S$-bisemimodule $_{R} S_{S} = \pi^{\#} S$.  Then, tensoring
by $\pi^{\#} S$ we have the \emph{extension} functor $\pi_{\#} :=
- \otimes_{R} \pi^{\#} S = - \otimes_{R} S \colon \mathcal{M}_{R} \to
\mathcal{M}_{S}$, which is a left adjoint to the restriction
functor $\pi^{\#} \colon \mathcal{M}_{S} \to \mathcal{M}_{R}$, by
\cite[Prop.~4.1]{kat:thcos}.  Using \cite[Prop.~3.8]{kat:tpaieosoars},
we obtain the following observation, which will prove to be useful.

\begin{prop}\label{prop:48}
  Let $\pi \colon R \to S$ be a semiring homomorphism.
  \begin{enumerate}[label=(\arabic*)]
  \item The extension functor $\pi_{\#} \colon \mathcal{M}_{R} \to
    \mathcal{M}_{S}$ preserves the subcategory of (finitely generated)
    strongly projective semimodules.
  \item The restriction functor $\pi^{\#} \colon \mathcal{M}_{S}
    \to \mathcal{M}_{R}$ preserves the subcategory of (finitely
    generated) strongly projective semimodules if and only if
    $\pi^{\#}(S)$ is a (finitely generated) strongly projective
    right $R$-semimodule.
  \end{enumerate}
\end{prop}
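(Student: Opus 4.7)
The plan is to exploit the adjunction $\pi_{\#} \dashv \pi^{\#}$ together with the facts that both functors preserve direct sums --- $\pi_{\#}$ because it is the tensor product $- \otimes_{R} S$ (and hence a left adjoint), and $\pi^{\#}$ because it only alters the $R$-action componentwise through~$\pi$, leaving the underlying additive structure untouched. Throughout I will use the definition of (finitely generated) strongly projective semimodules as direct summands of (finitely generated) free semimodules, and Remark~\ref{rem:42}(3), which states that a direct sum of semimodules is strongly projective iff each summand is.

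For part~(1), I would first observe that $\pi_{\#}(R_{R}) = R \otimes_{R} S \cong S_{S}$, so by preservation of direct sums (the relevant statement being \cite[Prop.~3.8]{kat:tpaieosoars}, invoked already in the paragraph preceding the proposition) $\pi_{\#}$ carries $R^{(I)}$ to $S^{(I)}$, and in particular $R^{n}$ to $S^{n}$. Since any functor preserves split epimorphisms and their splittings, if $R^{(I)} \cong P \oplus Q$ then $S^{(I)} \cong \pi_{\#}(P) \oplus \pi_{\#}(Q)$, so $\pi_{\#}(P)$ is strongly projective over~$S$; the same argument in the finite case handles finite generation.

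For part~(2), the ``only if'' direction is immediate by applying the hypothesis to the rank-one free semimodule $S_{S}$, which is (finitely generated) strongly projective over~$S$. For the converse, suppose $\pi^{\#}(S)$ is (f.g.) strongly projective over~$R$ and let $P$ be (f.g.) strongly projective over~$S$, so $P \oplus Q \cong S^{(I)}$ (resp.\ $S^{n}$) for some~$Q$. Applying $\pi^{\#}$, which commutes with direct sums componentwise, gives $\pi^{\#}(P) \oplus \pi^{\#}(Q) \cong \pi^{\#}(S)^{(I)}$ (resp.\ $\pi^{\#}(S)^{n}$). By Remark~\ref{rem:42}(3) the right-hand side is strongly projective over~$R$, and by the same remark its summand $\pi^{\#}(P)$ is strongly projective as well. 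In the finitely generated case $\pi^{\#}(P)$ is also finitely generated, being a homomorphic image (via the projection) of the finitely generated $R$-semimodule $\pi^{\#}(S)^{n}$.

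I do not expect any genuine obstacle: the whole argument is a transport of structure through the adjunction. The one point that deserves explicit mention is that $\pi^{\#}$ preserves \emph{arbitrary} direct sums, not merely finite biproducts --- but this is clear from the componentwise construction of direct sums in $\mathcal{M}_{R}$ and $\mathcal{M}_{S}$, since $\pi^{\#}$ does not change the underlying monoid.
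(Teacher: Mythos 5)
Your proposal is correct and follows essentially the same route as the paper: part~(1) is the observation that $-\otimes_R S$ preserves direct sums and sends $R^{(I)}$ to $S^{(I)}$, and part~(2) applies $\pi^{\#}$ to $S^{(J)}\cong P\oplus B$ and composes with $R^{(I)}\cong \pi^{\#}(S)\oplus A$; the paper merely unfolds your appeal to Remark~\ref{rem:42}(3) into the explicit isomorphism $R^{(I\times J)}\cong P\oplus B\oplus A^{(J)}$.
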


\begin{proof}
  (1) Let~$P$ be a strongly projective $R$-semimodule.  There is
  then a right $R$-semimodule~$Q$ such that $R^{(I)} \cong
  P \oplus Q$ for some basis set~$I$.  Now according to
  \cite[Prop.~3.8]{kat:tpaieosoars}, we obtain that
  \[ S^{(I)} \cong R^{(I)} \otimes_{R} S \cong (P \oplus Q)
  \otimes_{R} S \cong (P \otimes_{R} S) \oplus (Q \otimes_{R} S) , \]
  whence $\pi_{\#}(P) = P \otimes_{R} S$ is a strongly projective right
  $S$-semimodule.

  (2) $(\Longrightarrow)$. It is obvious.

  $(\Longleftarrow)$.  Assume that $\pi^{\#}(S)$ is a strongly
  projective right $R$-semimodule and let~$P$ be a strongly projective
  right $S$-semimodule.  Then,
  \[ R^{(I)} \cong S \oplus A \quad \text{ and } \quad
  S^{(J)}\cong P\oplus B \]%
  for some right $R$-semimodule~$A$ and some right $S$-semimodule~$B$.
  This implies 
  \[ R^{(I\times J)} \cong S^{(J)} \oplus A^{(J)} \cong P \oplus B
  \oplus A^{(J)} \]
  as right $R$-semimodules, thus $\pi^{\#}(P)$ is strongly projective.
\end{proof}

Applying Propositions~\ref{prop:44} and~\ref{prop:48}, the next result
gives a full description of the (finitely generated) strongly
projective semimodules over semisimple semirings.

\begin{thm}\label{thm:49}
  Let~$S$ be a semisimple semiring with direct product representation
  \[ S \cong M_{n_1}(D_{1}) \times \cdots \times M_{n_{r}}(D_{r}) \,, \]
  where $D_{1}, \dots, D_{r}$ are division semirings.  For each
  $1 \le j \le r$ and $1 \le i \le n_{j}$, let~$e_{ii}^{(j)}$ denote
  the $n_{i} \times n_{i}$ matrix units in $M_{n_{j}}(D_{j})$.
  Then, the following holds:
  \begin{enumerate}[label=(\arabic*)]
  \item A right $S$-semimodule is strongly projective if and only if
    it is isomorphic~to 
    \[ (e_{11}^{(1)} S)^{(I_{1})} \oplus \cdots \oplus (e_{11}^{(r)}S)^{(I_{r})} \]
    for some sets $I_{1}, \dots, I_{r}$.
  \item A finitely generated right $S$-semimodule is strongly projective
    if and only if it can be uniquely written in the form
    \[ (e_{11}^{(1)} S)^{k_{1}} \oplus \cdots \oplus (e_{11}^{(r)} S)^{k_{r}} , \]
    where $k_{1}, \dots, k_{r}$ are nonnegative integers.
  \end{enumerate}
\end{thm}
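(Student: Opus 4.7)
The plan is a two-stage reduction, ending in the already-proved case of division semirings (Theorem~\ref{thm:45}): first separate the factors of the direct product representation of~$S$, then apply the Morita-type equivalence from the proof of Theorem~\ref{thm:34}. For the first stage, let $\epsilon_{j} \in S$ denote the identity of the factor $M_{n_{j}}(D_{j})$; then $\epsilon_{1}, \ldots, \epsilon_{r}$ is a complete system of orthogonal central idempotents of~$S$, so every right $S$-semimodule~$M$ decomposes canonically as $M = \bigoplus_{j} M\epsilon_{j}$, with each summand $M\epsilon_{j}$ carrying a natural right $M_{n_{j}}(D_{j})$-semimodule structure. I would check that $P$ is a (finitely generated) strongly projective $S$-semimodule if and only if every $P\epsilon_{j}$ is a (finitely generated) strongly projective $M_{n_{j}}(D_{j})$-semimodule: the forward direction is immediate, as an isomorphism $P \oplus Q \cong S^{(I)}$ yields $P\epsilon_{j} \oplus Q\epsilon_{j} \cong M_{n_{j}}(D_{j})^{(I)}$ upon multiplication by~$\epsilon_{j}$; the backward direction uses that $M_{n_{j}}(D_{j}) = S\epsilon_{j}$ is itself a direct summand of~$S_{S}$, which makes $M_{n_{j}}(D_{j})^{(I_{j})}$ strongly projective over~$S$, after which Remark~\ref{rem:42}(3) reassembles the pieces.

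For the second stage, the functor pair $F_{j}(A) = A e_{11}^{(j)}$ and $G_{j}(B) = B^{n_{j}}$ from \cite[Thm.~5.14]{kat:thcos} (recalled in the proof of Theorem~\ref{thm:34}) is a category equivalence $\mathcal{M}_{M_{n_{j}}(D_{j})} \simeq \mathcal{M}_{D_{j}}$, and any such equivalence preserves direct sums, free objects, (strong) projectivity, and finite generation; moreover one readily identifies $G_{j}(D_{j}) \cong e_{11}^{(j)} M_{n_{j}}(D_{j}) = e_{11}^{(j)} S$. By Theorem~\ref{thm:45}, the (finitely generated) strongly projective right $D_{j}$-semimodules are exactly the free ones $D_{j}^{(I_{j})}$ (with $I_{j}$ finite of unique cardinality in the finitely generated case), so applying $G_{j}$ yields $P\epsilon_{j} \cong (e_{11}^{(j)}S)^{(I_{j})}$; summing over~$j$ produces the descriptions in~(1) and~(2). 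Uniqueness of the $k_{j}$ in~(2) follows by multiplying a putative isomorphism $\bigoplus_{j}(e_{11}^{(j)}S)^{k_{j}} \cong \bigoplus_{j}(e_{11}^{(j)}S)^{k'_{j}}$ by~$\epsilon_{i}$, applying~$F_{i}$, and invoking the IBN property of the division semiring~$D_{i}$ (Theorem~\ref{thm:45}) to conclude $k_{i}=k'_{i}$.

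The subtler point I would treat most carefully is the backward implication of the first stage: one must verify that the $M_{n_{j}}(D_{j})$-structure on $P\epsilon_{j}$ coincides with its $S$-structure (because the remaining~$\epsilon_{i}$ act as zero) and, crucially, recognize $M_{n_{j}}(D_{j})$ as a genuine direct summand of~$S_{S}$, not merely as a quotient of~$S$ via the projection~$\pi_{j}$; this identification is what lets strong projectivity over the factor lift back to strong projectivity over~$S$. Once that bookkeeping is in place, the second stage is a routine Morita-like translation combined with Theorem~\ref{thm:45}.
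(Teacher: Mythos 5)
Your proof is correct, and its first stage---splitting $P$ along the orthogonal central idempotents $\epsilon_{j}$---is in substance the same decomposition the paper performs with the extension functors $(\pi_{j})_{\#}$ along the canonical projections (indeed $(\pi_{j})_{\#}(P)\cong P\epsilon_{j}$), including the key point you rightly flag: that $M_{n_{j}}(D_{j})=\epsilon_{j}S$ is a direct summand of $S_{S}$, which is exactly what makes the restriction functor preserve strong projectivity via Proposition~\ref{prop:48}\,(2), after which Remark~\ref{rem:42}\,(3) reassembles the pieces. Where you genuinely diverge is the second stage. The paper does not invoke the category equivalence of \cite[Thm.~5.14]{kat:thcos} at this point; instead it re-runs, over the matrix semiring $M_{m}(D)$, the argument of Theorem~\ref{thm:45}: it shows that a strongly projective $P$ is a \emph{strong} subsemimodule of a free semimodule $F\cong (e_{11}M_{m}(D))^{(K)}$, observes that the summands $e_{11}M_{m}(D)$ are minimal, and applies Proposition~\ref{prop:44} to get $P\cong (e_{11}M_{m}(D))^{(I)}$, with uniqueness obtained by restricting scalars to $D$ and invoking IBN. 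Your route transports the problem across the equivalence $(F_{j},G_{j})$ and quotes Theorem~\ref{thm:45} as a black box, which is shorter and avoids repeating the Proposition~\ref{prop:44} argument; the price is that you must justify that the equivalence preserves strong projectivity in both directions. Here there is one small imprecision to repair: the equivalence does \emph{not} ``preserve free objects''---$G_{j}(D_{j})\cong e_{11}^{(j)}M_{n_{j}}(D_{j})$ is not free over $M_{n_{j}}(D_{j})$---but the conclusion you need survives, because $G_{j}(D_{j})$ is a direct summand of the regular semimodule, so $G_{j}$ carries direct summands of free semimodules to direct summands of free semimodules, while $F_{j}$ sends the regular semimodule to the free $D_{j}$-semimodule $D_{j}^{n_{j}}$, making the other direction immediate. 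With that noted, both stages are sound, and your uniqueness argument via the IBN property of $D_{i}$ (Corollary~\ref{cor:52}) agrees with the paper's.
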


\begin{proof}
  For each $1 \le j \le r$, let $\pi_{j} \colon S \to M_{n_{j}}(D_{j})$
  be the canonical projection.  Then $\pi_{j}^{\#}(M_{n_{j}}(D_{j}))$ is
  obviously a (finitely generated) strongly projective right
  $S$-semimodule and $e_{ii}^{(j)} M_{n_{j}}(D_{j})$ for $1 \le i \le n_{j}$
  is a (finitely generated) strongly projective right $M_{n_{j}}(D_{j})$%
  -semimodule.  Therefore, by Proposition~\ref{prop:48}\,(2),
  $e_{ii}^{(j)} S = \pi_{j}^{\#} (e_{ii}^{(j)} M_{n_{j}}(D_{j}))$ is a
  (finitely generated) strongly projective right $S$-semimodule.
  From this and Remark~\ref{rem:42}\,(3), we immediately see that the
  sufficient conditions of statements~(1) and~(2) are true.

  Assuming that $P \in |\mathcal{M}_{S}|$ is a (finitely generated)
  strongly projective semimodule, we may write it in the form 
  \[ P \,=\, (\pi_{1})_{\#}(P) \oplus \cdots \oplus (\pi_{r})_{\#}(P) \,. \]
  By Proposition~\ref{prop:48}\,(1), $(\pi_{i})_{\#}(P)$ is a
  (finitely generated) strongly right $M_{n_{i}}(D_{i})$-semimodule
  for each $1 \le i \le r$.

  Now we consider the structure of a (finitely generated) strongly
  projective right semimodule over a matrix semiring $M_{m}(D)$ for
  some positive integer~$m$ and division semiring~$D$.  For
  $1 \le i, j \le m$, let $e_{ij}$ be the $m \times m$ matrix units
  in $M_{m}(D)$.  Then each $e_{ii} M_{m}(D)$, for $i = 1, \dots, m$,
  is a minimal right $M_{m}(D)$-semimodule, and $M_{m}(D) = 
  \bigoplus_{i=1}^{m} e_{ii} M_{m}(D)$.  As it is clear that, for all
  $i, j = 1, \dots, m$, the $M_{m}(D)$-semimodules $e_{ii}M_{m}(D)$
  and $e_{jj} M_{m}(D)$ are isomorphic, the semimodules $M_{m}(D)$ and
  $(e_{11}M_{m}(D))^{m}$ are isomorphic as right $M_{m}(D)$-semimodules, too.

  Again, it is easy to see that $D$ is either a division ring or a
  zerosumfree division semiring. In the first scenario, it is a
  well-known ``classical'' result (\eg, \cite[Thm.~1.3.3]{lam:afcinr})
  that every (finitely generated) projective right $M_{m}(D)$-module can
  be uniquely written in the form
  \[ (e_{11} M_{m}(D))^{(I)} \quad \text{ for some (finite) set~$I$} . \]

  Thus from now on, let~$D$ be a zerosumfree division semiring, and
  hence, $M_{m}(D)$ is a zerosumfree semisimple semiring.  Let~$P$ be
  a (finitely generated) strongly projective right
  $M_{m}(D)$-semimodule, \ie, $P$~is a direct summand of a free right
  $M_{m}(D)$-semimodule $M_{m}(D)^{(J)} =: F$ for some (finite)
  set~$J$.  As in the proof of Theorem~\ref{thm:45}, it is easy to show
  that~$P$ is a strong subsemimodule of~$F$.  Also, since $M_{m}(D)
  \cong (e_{11}M_{m}(D))^{m}$, we have that~$F$ is a semisimple
  semimodule and $F \cong (e_{11}M_{m}(D))^{(K)}$ for some set~$K$
  (which can be taken to be finite in case~$P$ is finitely generated).
  From the latter, by Proposition~\ref{prop:44}, there exists a set $I
  \subseteq K$ such that $P \cong (e_{11} M_{m}(D))^{(I)}$ (and~$I$ is
  finite when~$P$ is finitely generated).

  If $(e_{11} M_{m}(D))^{n} \cong (e_{11} M_{m}(D))^{k}$ as right
  $M_{m}(D)$-semimodules, then it easy to see that
  $(e_{11} M_{m}(D))^{n} \cong (e_{11} M_{m}(D))^{k}$ as right
  $D$-semimodules, which means that $D^{mn}\cong D^{mk}$ as right
  $D$-semimodules.  From the latter, by the IBN property of division
  semirings (see \cite[Thm.~5.3]{hebwei:otrosos} or
  Corollary~\ref{cor:52} below), one gets $m n = m k$ and $m = k$.
  Therefore, every finitely generated strongly projective right
  $M_{m}(D)$-semimodule can be uniquely written in the form
  \[ (e_{11} M_{m}(D))^{k} \quad \text{ for some nonnegative integer~}k . \]

  Finally, we notice that $e_{11}^{(j)} S \cong \pi_{j}^{\#} (e_{11}^{(j)}
  M_{n_{j}}(D_{j}))$ as right $S$-semimodules for all $j = 1, \dots, r$,
  and conclude that the necessary conditions of statements~(1) and~(2)
  are true as well.
\end{proof}

At the end of this section we establish some preparatory results
regarding strongly projective semimodules over semirings that are
direct limits of directed families of semirings.  Let us recall a few
general notions from universal algebra (see, \eg, \cite[Ch.~3]%
{gratzer:ua}) in a semiring context.  A partially ordered set~$I$ is
\emph{directed} if any two elements of~$I$ have an upper bound in~$I$.
Denoting by $\mathcal{SR}$ the category of semirings, a \emph{direct
  system} $\{ S_{i} \mid \phi_{ij} \}$ of semirings over a directed
set~$I$ consists of a family $\{ S_{i} \}_{i \in I}$ of semirings
$S_{i} \in |\mathcal{SR}|$, together with semiring homomorphisms
$\phi_{ij} \colon S_{i} \to S_{j}$ for $i \le j$, such that, for all
$i, j, k \in I$, if $i \le j \le k$, then $\phi_{jk} \phi_{ij} = \phi_{ik}$
and $\phi_{ii} = \on{id}_{S_{i}}$.  If one defines a binary relation
``$\equiv$'' on the disjoint union $\bigcup_{i \in I} S_{i}$ of the
sets~$S_{i}$ by $x \equiv y$ iff $x \in S_{i}$, $y \in S_{j}$ for some
$i, j \in I$, and there exists $z \in S_{k}$ such that $i, j \le k$
and $\phi_{ik}(x) = z = \phi_{jk}(y)$, then this is easily seen to be an
equivalence relation.  Considering its set $S := \{ [x] \mid x \in
\bigcup_{i \in I} S_{i} \}$ of equivalence classes, it is not hard to
verify that by defining
\[ [x] + [y] := [ \phi_{ik}(x) + \phi_{jk}(y) ] \quad \text{ and }
\quad [x] \cdot [y] := [ \phi_{ik}(x) \cdot \phi_{jk}(y) ] \,, \]
where $x \in S_{i}$, $y \in S_{j}$ and $i, j \le k$, one obtains a
semiring $\varinjlim_{I} S_{i} := S = (S, +, \cdot, [0], [1])$ called
the \emph{direct limit} of the direct system $\{ S_{i} \mid \phi_{ij} \}$
of semirings.  It is also easy that there is a family
$\{ \phi_{i} \}_{i \in I}$ of canonical homomorphisms $\phi_{i} \colon
S_{i} \to S$ defined by $\phi_{i}(x) := [x]$ for any $x \in S_{i}$,
so that $\phi_{i} = \phi_{j} \phi_{ij}$ for all $i \le j$; and if
all~$\phi_{ij}$ for $i \le j$ are embeddings, then all $\phi_{i}$,
$i \in I$, are embeddings, too.

Our next result, needed in a sequel, is of a ``technical'' nature and
can be justified by using Lemma~\ref{lem:43} and repeating verbatim
the proof of \cite[Lemma 15.10]{g:vnrr} in the ring setting.  However,
for the reader's convenience, we briefly sketch here an alternative,
more homological, proof based on the tensor product construction and
related results considered in~\cite{kat:tpaieosoars} and~\cite{kat:thcos}.

\begin{prop}[{cf.~\cite[Lem.~15.10]{g:vnrr}}]\label{prop:410}
  Let~$S$ be a direct limit of a direct system $\{ S_{i} \mid \phi_{ij} \}$
  of semirings.  Then the following statements are true:
  \begin{enumerate}[label=(\arabic*)]
  \item If~$P$ is a finitely generated (strongly) projective right
    $S$-semimodule, then, for some~$m$, there exists a finitely
    generated (strongly) projective right $S_{m}$-semimodule~$Q$ such
    that $Q \otimes_{S_{m}} S \cong P$.
  \item If $P \otimes_{S_{i}} S \cong Q \otimes_{S_{i}} S$ for some~$i$
    and finitely generated (strongly) projective right $S_{i}$-semimodules
    $P, Q$, then $P \otimes_{S_{i}} S_{k} \cong Q \otimes_{S_{i}} S_{k}$ for
    some $k \ge i$.
  \end{enumerate}
\end{prop}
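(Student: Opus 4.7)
The plan is to combine the matrix description of (strongly) projective semimodules from Lemma~\ref{lem:43} with the elementary fact that in a direct limit $S = \varinjlim S_i$, finitely many elements and finitely many polynomial relations among them can always be lifted to a single $S_k$ with $k$ large enough in the directed set.

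For part~(1), I would start from $P \cong A(S^n)$ with $A \in M_n(S)$ idempotent, and in the strongly projective case together with a companion matrix $A' \in M_n(S)$ satisfying $A + A' = I_n$, $(A')^2 = A'$, and $AA' = 0 = A'A$. Since each entry of $A$ (and of $A'$) lies in some $S_j$, one can choose $m$ large enough that $A$ lifts entrywise to $\bar A \in M_n(S_m)$ (and $A'$ to $\bar A'$); after enlarging $m$ further if necessary, the defining idempotency (respectively strong idempotency) relations will already hold in $M_n(S_m)$. Setting $Q := \bar A(S_m^n)$, Lemma~\ref{lem:43} ensures that $Q$ is a finitely generated (strongly) projective $S_m$-semimodule, and $Q$ is a retract of $S_m^n$ via $\bar A$. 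Since $-\otimes_{S_m}S$ preserves direct sums and hence retracts, by \cite[Prop.~3.8]{kat:tpaieosoars}, the semimodule $Q \otimes_{S_m} S$ is the retract of $S^n \cong S_m^n \otimes_{S_m} S$ cut out by the induced idempotent $A$, namely $A(S^n) \cong P$.

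For part~(2), I would use Lemma~\ref{lem:43} to write $P \cong \bar A(S_i^n)$ and $Q \cong \bar B(S_i^n)$ for (strongly) idempotent matrices $\bar A, \bar B \in M_n(S_i)$, after padding to a common size, and set $A := \phi_i(\bar A)$, $B := \phi_i(\bar B) \in M_n(S)$. Any $S$-semimodule homomorphism $f \colon A(S^n) \to B(S^n)$ extends via $x \mapsto f(Ax)$ to an endomorphism of $S^n$ represented by a matrix $F \in M_n(S)$, so the given pair of mutually inverse isomorphisms $A(S^n) \leftrightarrow B(S^n)$ will produce matrices $F, G \in M_n(S)$ satisfying $GF = A$ and $FG = B$. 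Lifting the finitely many entries of $F, G$ to some $S_j$ with $j \ge i$ and then enlarging the index to some $k \ge j$ so that $\bar G\bar F = \phi_{ik}(\bar A) =: A_k$ and $\bar F\bar G = \phi_{ik}(\bar B) =: B_k$ already hold in $M_n(S_k)$ is the standard direct-limit maneuver.

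To finish, I would observe that from $\bar G\bar F = A_k$ and $\bar F\bar G = B_k$ one derives $\bar F A_k = \bar F(\bar G\bar F) = B_k\bar F$ and similarly $\bar G B_k = A_k\bar G$, so $\bar F$ restricts to a map $A_k(S_k^n) \to B_k(S_k^n)$ and $\bar G$ to its opposite, while on these images the compositions $\bar G\bar F = A_k$ and $\bar F\bar G = B_k$ act as the respective identities. Hence $A_k(S_k^n) \cong B_k(S_k^n)$, which by the tensor--retract argument of part~(1) yields $P \otimes_{S_i} S_k \cong Q \otimes_{S_i} S_k$. The main obstacle throughout is the absence of subtraction in semirings, which forbids the usual ring-theoretic manipulation of complementary idempotents such as $I_n - A$; this is bypassed by using Lemma~\ref{lem:43}'s explicit (strongly) idempotent matrix presentation in the strongly projective case and by reasoning directly with retracts of free semimodules in the merely projective case.
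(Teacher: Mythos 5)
Your proof is correct, but it is not the route the paper actually writes out. The authors explicitly remark before Proposition~\ref{prop:410} that the result ``can be justified by using Lemma~\ref{lem:43} and repeating verbatim the proof of \cite[Lem.~15.10]{g:vnrr} in the ring setting'' --- which is precisely your argument: present $P$ as $A(S^n)$ for a (strongly) idempotent matrix, lift the finitely many entries and the finitely many defining relations ($A^2=A$, and in the strong case $A+A'=I_n$, $AA'=A'A=0$) to some $S_m$, and in part~(2) encode the mutually inverse isomorphisms as matrices $F,G$ with $GF=A$, $FG=B$ and push those relations down to some $S_k$. They then choose instead to sketch an ``alternative, more homological'' proof: for~(1) they take a finite generating set $P_0$ of $P$ with components in some $S_m$, set $Q:=P_0S_m$, and compute $Q\otimes_{S_m}S\cong P$ by exploiting that ${}_SS=\varinjlim{}_SS_i$ and that the tensor product commutes with direct limits; for~(2) they argue directly that an isomorphism of the tensored-up semimodules involves only finitely many elements of~$S$ and so descends to some $S_k$. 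The trade-off is that your matrix version is more explicit and handles the parenthetical ``(strongly)'' uniformly through the (strongly) idempotent matrix presentation, and it makes the descent of the isomorphism in part~(2) completely concrete via the identities $\bar F A_k=B_k\bar F$ and $\bar G B_k=A_k\bar G$; the paper's tensor-product version is shorter and avoids matrix bookkeeping, at the cost of leaving steps such as the (strong) projectivity of $Q=P_0S_m$ to the reader. Both arguments are sound, and yours fills in exactly the details the paper delegates to Goodearl.
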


\begin{proof}
  (1) Since the semimodule $P \in |\mathcal{M}_{S}|$ is a finitely
  generated summand of a free $S$-semimodule, we can consider all
  components of a finite generator~$P_0$ to be elements of some
  semiring~$S_{m}$, and let $Q := P_0 S_{m} \in |\mathcal{M}_{S_{m}}|$.
  It is easy to see that~$Q$ is a (strongly) projective $S_{m}$-semimodule
  and $\varinjlim_{I} S S_{i} = {}_{S}S$ in the category $_{S}\mathcal{M}$.
  Then, $P = P_0 S = P_0 S \otimes_{S} S = P_0 S \otimes_{S} \varinjlim_{I}
  S S_{i} = \varinjlim_{I}(P_0 S \otimes_{S} S S_{i}) = \varinjlim_{I}(P_0 S_{m}
  \otimes_{S_{m}} S_{S} \otimes_{S} S S_{i}) = \varinjlim_{I}(P_0 S_{m}
  \otimes_{S_{m}} S S_{i}) = P_0 S_{m} \otimes_{S_{m}} \varinjlim_{I} S S_{i}
  = Q \otimes_{S_{m}} \varinjlim_{I} S S_{i} = Q \otimes_{S_{m}} S$.

  (2) Since semimodules $P, Q \in |\mathcal{M}_{S_{i}}|$ are finitely
  generated summands of free $S_{i}$-semimodules, the semimodules
  $P \otimes _{S_{i}} S, \, Q \otimes_{S_{i}} S \in |\mathcal{M}_{S}|$
  are finetely generated summands of free $S$-semimodules as well.
  Since any isomorphism between $S$-semimodules $P\otimes_{S_{i}} S$
  and $Q \otimes_{S_{i}} S$ is defined by the finite number of their
  generators and a finite number of elements of~$S$, and taking
  into consideration the nature of the congruence relation in the
  construction of the tensor product, we can consider that all
  elements of~$S$ involved into the isomorphism $P \otimes_{S_{i}} S
  \cong Q \otimes _{S_{i}}S$ are elements of some semiring~$S_{k}$
  with $k \ge i$.  
  Therefore, $P \otimes_{S_{i}} S \cong P \otimes_{S_{i}}(S_{k} \otimes
  _{S_{k}} S) \cong (P \otimes_{S_{i}} S_{k}) \otimes_{S_{k}}S$ and $Q \otimes
  _{S_{i}} S \cong Q \otimes_{S_{i}}(S_{k} \otimes_{S_{k}} S)\cong (P \otimes
  _{S_{i}} S_{k}) \otimes_{S_{k}} S$, and it is clear that $P \otimes
  _{S_{i}} S_{k}$ $\cong Q \otimes_{S_{i}} S_{k}$.
\end{proof}

\section{Characterizing ultramatricial algebras
  by monoids of isomorphism classes of projective
  semimodules}\label{sec:5}

In this section, we introduce the monoids $\mathcal{V}(S)$ and
$\mathcal{SV}(S)$ of isomorphism classes of finitely generated
projective and strongly projective, respectively, semimodules over a
semiring~$S$ and demonstrate their roles in the characterization of
the class of ultramatricial algebras over a semifield.  The proof of
the main result is essentially based on the presentation in
\cite[Ch.~15]{g:vnrr}.

From now on, let $\mathcal{V}(S)$ be the set of isomorphism classes of
finitely generated projective right $S$-semimodules and, for a finitely
generated projective $S$-semimodule $P\in |\mathcal{M}_{S}|$ let
$\overline{P} \in \mathcal{V}(S)$ denote the class of finitely
generated projective right $S$-semimodules isomorphic to~$P$.
Furthermore, defining for any isomorphism classes $\overline{P}$ and
$\overline{Q}$ an addition ``$+$'' by $\overline{P} + \overline{Q}
:= \overline{P \oplus Q}$, it is easy to see that the set $\mathcal{V}(S)$
becomes a commutative monoid $(\mathcal{V}(S), +, \overline{0})$ with the
zero element~$\overline{0}$.  The monoid $\mathcal{V}(S)$ is always a
\emph{zerosumfree} monoid (or a \emph{strict cone} in the terminology of
\cite[p.~202]{g:vnrr}), \ie, $x + y = 0$ implies $x = y = 0$.

Let $\mathcal{SV}(S) \subseteq \mathcal{V}(S)$ denote the submonoid of
the monoid $\mathcal{V}(S)$ consisting of all classes $\overline{P}$
with a finitely generated strongly projective $S$-semimodule
$P \in |\mathcal{M}_{S}|$, \ie, $\mathcal{SV}(S) := \{ \overline{P}
\in \mathcal{V}(S) \mid P \text{ is a strongly projective
  $S$-semimodule} \}$.

Before presenting ``computational'' examples of the monoids
$\mathcal{V}(S)$ and $\mathcal{SV}(S)$, we start with some useful
observations.  Recall that a semiring~$S$ has the IBN property if
$S^m \cong S^n$ (in $\mathcal{M}_S$) implies $m = n$, for any natural
numbers $m, n$.

\begin{lem}\label{lem:51}
  If $\phi \colon S \to T$ is a semiring homomorphism and~$T$ has the
  IBN property, then~$S$ has it as well.
\end{lem}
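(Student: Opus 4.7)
The plan is to transport the isomorphism of free $S$-semimodules along the homomorphism $\phi$ to obtain an isomorphism of free $T$-semimodules, and then invoke IBN of $T$.

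Concretely, suppose $S^{m} \cong S^{n}$ in $\mathcal{M}_{S}$ for some natural numbers $m, n$. I would apply the extension functor $\phi_{\#} = - \otimes_{S} \phi^{\#}T \colon \mathcal{M}_{S} \to \mathcal{M}_{T}$ introduced in the paragraph before Proposition~\ref{prop:48}. Since $\phi_{\#}$ is a left adjoint (it preserves colimits, in particular finite direct sums), and since $\phi_{\#}(S_{S}) = S \otimes_{S} \phi^{\#}T \cong \phi^{\#}T$ is isomorphic to $T_{T}$ as a right $T$-semimodule, we obtain
\[ T^{m} \,\cong\, \phi_{\#}(S^{m}) \,\cong\, \phi_{\#}(S^{n}) \,\cong\, T^{n} \]
in $\mathcal{M}_{T}$. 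Now the IBN property of $T$ forces $m = n$, which is precisely the IBN property for $S$.

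The only point that needs a moment's care is that $\phi_{\#}$ commutes with finite direct sums and sends $S_{S}$ to $T_{T}$; both facts follow directly from \cite[Prop.~3.8]{kat:tpaieosoars} (invoked already in the proof of Proposition~\ref{prop:48}\,(1)), so no obstacle is expected here. The argument is essentially a one-line application of functoriality, and there is no serious technical hurdle.
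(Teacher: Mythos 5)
Your proof is correct and is essentially the same as the paper's: both apply the extension functor $-\otimes_{S}T$ to the isomorphism $S^{m}\cong S^{n}$, use that it preserves finite direct sums and sends $S_{S}$ to $T_{T}$ (via \cite[Prop.~3.8]{kat:tpaieosoars}), and then invoke the IBN property of~$T$.
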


\begin{proof}
  Indeed, if $S^{m} \cong S^{n}$ in $\mathcal{M}_{S}$, then by
  \cite[Prop.~3.8]{kat:tpaieosoars} and \cite[Thm.~3.3]%
  {kat:thcos}, one readily has $T^{m} \cong S^{m} \otimes_{S}
  T\cong S^{n} \otimes_{S} T\cong T^{n}$ in $\mathcal{M}_{T}$,
  whence $m = n$.
\end{proof}

\begin{cor}\label{cor:52}
  Division semirings and commutative semirings satisfy IBN.
\end{cor}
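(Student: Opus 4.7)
The plan is to invoke Lemma~\ref{lem:51} in both cases, reducing IBN to that of the Boolean semifield~$\mathbb{B}$ and of fields. The Boolean semifield trivially has IBN because $|\mathbb{B}^{n}| = 2^{n}$, so any bijection $\mathbb{B}^{m} \cong \mathbb{B}^{n}$ forces $m = n$; and fields satisfy IBN by classical dimension theory for vector spaces.

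For a division semiring~$D$, I would use the dichotomy (noted in the proof of Theorem~\ref{thm:45}) that~$D$ is either a division ring or a zerosumfree division semiring. The first case is the classical one and is immediate. In the second case, since~$D$ is zerosumfree (so that $a \ne 0$ forces $a + b \ne 0$ for any $b$, because $a + b = 0$ would entail $a = b = 0$) and entire (as $(D \setminus \{0\},\cdot)$ is a group), the assignment $\phi \colon D \to \mathbb{B}$ with $\phi(0) = 0$ and $\phi(a) = 1$ for $a \ne 0$ is routinely checked to preserve both operations and the constants; applying Lemma~\ref{lem:51} then transfers IBN from~$\mathbb{B}$ to~$D$.

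For a nontrivial commutative semiring~$S$, the plan is to produce a semiring homomorphism $S \to F$ into some semifield~$F$ and again apply Lemma~\ref{lem:51}, invoking the division-semiring case just established. The existence of such a semifield quotient is the semiring analogue of the classical fact that every nonzero commutative ring admits a field quotient: one applies Zorn's lemma to an appropriate poset of congruences on~$S$ (for example, those whose quotient is a nontrivial entire commutative semiring with $[0] \ne [1]$), obtains a maximal such congruence, and argues that the resulting quotient is either already a semifield or embeds into one via the commutative-semiring analogue of the field-of-fractions construction.

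The main obstacle is the commutative case, where producing the semifield quotient requires a careful Zorn-type construction, since the interaction between congruences and ideals in the semiring setting is more delicate than in the ring setting (subtractive versus non-subtractive ideals, Bourne congruences, the possibility of non-cancellative addition, etc.). Once this step is in hand, both halves of the corollary follow uniformly by pulling IBN back along the relevant homomorphism via Lemma~\ref{lem:51}.
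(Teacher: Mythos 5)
Your treatment of division semirings coincides with the paper's own argument: the dichotomy ``division ring or zerosumfree division semiring'', the homomorphism $\phi \colon D \to \mathbb{B}$ sending every nonzero element to~$1$, and the transfer of IBN back along~$\phi$ via Lemma~\ref{lem:51} are exactly what the paper does. The commutative case, however, has a genuine gap. The paper takes a congruence~$\rho$ on~$S$ that is maximal among the \emph{proper} congruences---a poset that is nonempty (it contains the diagonal) and closed under unions of chains (a congruence on a unital semiring is universal iff it contains the pair $(0,1)$, since $(0,1)\in\rho$ forces $(0,x)=(0\cdot x,1\cdot x)\in\rho$ for all~$x$)---and then invokes the classification of congruence-simple commutative semirings \cite[Thm.~10.1]{bshhurtjankepka:scs} to conclude that $S/\rho$ is a field or~$\mathbb{B}$, after which Lemma~\ref{lem:51} finishes. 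You instead apply Zorn's lemma to the poset of congruences whose quotient is a nontrivial \emph{entire} commutative semiring. That poset is indeed closed under unions of chains, but you never show it is nonempty: if~$S$ itself has zero-divisors, the identity congruence does not belong to it, and the existence of \emph{some} congruence with a nontrivial entire quotient is precisely the nontrivial content your argument needs (in the ring case this is the existence of a prime ideal, obtained from a maximal ideal; the analogous step here is missing from your sketch).

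The gap is repairable without citing the classification theorem: take~$\rho$ maximal among proper congruences, so that $T := S/\rho$ is a nontrivial congruence-simple commutative semiring; for $0 \ne a \in T$ the relation $x \sim_a y \Leftrightarrow xa = ya$ is a congruence on~$T$ (commutativity is used for compatibility with multiplication), and it is not universal since $1 \cdot a \ne 0 \cdot a$; hence it is the identity, so~$T$ is multiplicatively cancellative and in particular entire. From there your localization $T \to T[(T \setminus \{0\})^{-1}]$ does produce a semifield, and Lemma~\ref{lem:51} applies twice (once for $S \to T \to \mathrm{Frac}(T)$ and once for the semifield, which falls under the division-semiring case already established). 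One small correction along the way: without multiplicative cancellativity the localization map need not be injective, so ``embeds into'' should be ``maps to''---which is all that Lemma~\ref{lem:51} requires anyway. As written, though, the commutative half of your proof is not complete.
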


\begin{proof}
  The case when~$S$ is a division semiring was justified in
  \cite[Thm.~5.3]{hebwei:otrosos}; alternatively, one can use
  Lemma~\ref{lem:51} and the fact that for any zerosumfree division
  semiring~$S$ there is a semiring homomorphism $\phi \colon S \to
  \mathbb{B}$ into the finite IBN semiring $\mathbb B$, given by
  $\phi(0) = 0$ and $\phi(s) = 1$ for $0 \ne s \in S$.

  If~$S$ is a commutative semiring, by Zorn's lemma there exists a
  maximal congruence $\rho $ on $S$, so that $T := S/\rho$ is a
  congruence-simple commutative semiring (\ie, $T$ has only the
  trivial congruences).  By \cite[Thm.~10.1]{bshhurtjankepka:scs}, $T$
  is either a field or the Boolean semifield~$\mathbb{B}$, and
  hence~$T$ has the IBN property.  From Lemma~\ref{lem:51} it follows
  that~$S$ has the IBN property, too.
\end{proof}

\begin{exas}\label{exa:53}
  Cases of semirings~$S$, for which the monoids $\mathcal{V}(S)$ and
  $\mathcal{SV}(S)$ are essentially known, include the following.
  \begin{enumerate}[label=(\arabic*)]
  \item By Theorem~\ref{thm:45}, we have $\mathcal{SV}(D) \cong
    \mathbb{Z}^{+}$ for any division semiring~$D$.  In particular,
    $\mathcal{SV} (\mathbb{B}) \cong \mathbb{Z}^{+}$.  However,
    Example~\ref{exa:47} shows that $\mathcal{SV}(\mathbb{B})$ is a
    proper submonoid of $\mathcal{V}(\mathbb{B})$.  In fact, the
    monoid $\mathcal{V}(\mathbb{B})$ contains a free commutative monoid
    with countable basis. 
  \item For any semiring~$S$, obviously $\mathcal{V}(S) = \mathcal{SV}(S)$
    iff all finitely generated projective right $S$-semimodules are
    strongly projective.  In particular, for any division semiring~$D$,
    we have $\mathcal{V}(D) = \mathcal{SV}(D)$ iff~$D$ is \emph{weakly
      cancellative}, \ie, $a + a = a + b$ implies $a = b$, for all
    $a, b \in S$ (\cite[p.~4026]{ik:ospopsops}).  Indeed, by
    Theorem~\ref{thm:45}, one has $\mathcal{V}(D) = \mathcal{SV}(D)$
    iff every finitely generated projective right $D$-semimodule is
    free, that is, by \cite[Prop.~3.1, Thm.~3.2]{ik:ospopsops},
    iff~$D$ is a weakly cancellative division semiring.
  \item Let~$S$ be a semiring for which every finitely generated
    projective right~$S$-semimodule is free.  (For example, in
    \cite{patch:psoswvini} and \cite{ik:ospopsops}, polynomial
    semirings are considered having this property.)  Then the monoids
    $\mathcal{V}(S)$ and $\mathcal{SV}(S)$ are cyclic monoids
    generated by the element~$\overline{S}$.  If in addition~$S$ has
    the IBN property (\eg, if~$S$ is a commutative semiring), then
    the monoids $\mathcal{V}(S)$ and $\mathcal{SV}(S)$ are exactly
    $\mathbb{Z}^{+}$.
  \end{enumerate}
\end{exas}

Notice that if $\phi \colon R \to S$ is a semiring homomorphism, then,
taking into account Proposition~\ref{prop:48}, we see that~$\phi$ induces
a well-defined monoid homomorphism $\mathcal{V}(\phi) \colon \mathcal{V}(R)
\to \mathcal{V}(S)$ such that $\mathcal{V}(\phi)(\overline{P}) =
\overline{\phi_{\#}(P)} = \overline{P \otimes_R S}$; furthermore,
it holds that $\mathcal{V}(\phi)(\mathcal{SV}(R)) \subseteq
\mathcal{SV}(S)$.  From these observations, it is routine to check
that $\mathcal{V}$ and $\mathcal{SV}$ give covariant functors from the
category of semirings to the category of commutative monoids.

Recall that a commutative monoid~$M$ is \emph{conical} if $x + y = 0$
implies $x = 0 = y$, for any $x, y \in M$.  An \emph{order-unit} in
the monoid~$M$ is an element~$u$ in~$M$ such that for every $x \in M$
there exist $y \in M$ and a positive integer~$n$ such that $x + y = nu$.
Throughout this section, we denote by~$\mathcal{C}$ the category
consisting of all pairs $(M, u)$, where~$M$ is a conical monoid
and~$u$ is an order-unit in~$M$, with morphisms from an object
$(M, u)$ to an object $(M', u')$ to be the monoid homomorphisms
$f \colon M \to M'$ satisfying $f(u) = u'$.

For any semiring~$S$, observe that~$\overline{S}$ is an order-unit in
the conical monoid $\mathcal{SV}(S)$.  (Note that~$\overline{S}$ is
no order-unit in the monoid $\mathcal{V}(S)$, unless $\mathcal{V}(S)
= \mathcal{SV}(S)$.)  Therefore, we have an object $(\mathcal{SV}(S),
\overline{S})$ in the category~$\mathcal{C}$ defined above.  Given
any semiring homomorphism $\phi \colon R \to S$, note that
$\mathcal{SV}(\phi)$ maps $\overline{R}$ to $\overline{S}$, so
that $\mathcal{SV}(\phi)$ is a morphism in $\mathcal{C}$ from
$(\mathcal{SV}(R), \overline{R})$ to $(\mathcal{SV}(S), \overline{S})$.
Thus, $(\mathcal{SV}(-), \overline{-})$ defines a covariant functor
from the category of semirings to the category~$\mathcal{C}$.

In order to apply $\mathcal{SV}$ to direct limits and finite products
of semirings, we consider direct limits and finite products in the
category~$\mathcal{C}$.  Given a direct system of objects $(M_i, u_i)$
and morphisms~$f_{ij}$ in~$\mathcal{C}$, we first form the direct
limit~$M$ of the commutative monoids~$M_i$ and let $f_i \colon M_i \to M$
denote the canonical homomorphisms.  One can easily check that~$M$ is a
conical monoid.  Since $f_{ij}(u_i) = u_j$ whenever $i \le j$, there is a
unique element $u \in M$ such that $f_i(u_i) = u$ for all~$i$, and we
observe that~$u$ is an order-unit in~$M$.  Thus, $(M, u)$ is an object in
$\mathcal{C}$, and each~$f_i$ is a morphism from $(M_i, u_i)$ to $(M, u)$.
It is easy to see that $(M, u)$ is the direct limit of the $(M_i, u_i)$.

It is standard how to form finite products in the category~$\mathcal{C}$.
Namely, given objects $(M_1, u_1), \dots, (M_n, u_n)$ in $\mathcal{C}$,
we set $M = M_1 \times \cdots \times M_n$, which is a conical monoid,
together with $u = (u_1, \dots, u_n)$, which is an order-unit in~$M$.
It is easy to check that $(M, u)$ is the product of the $(M_i, u_i)$
in~$\mathcal{C}$.

\begin{prop}\label{prop:54}
  The functor $(\mathcal{SV}(-), \overline{-}) \colon \mathcal{SR}
  \to \mathcal{C}$ preserves direct limits and finite products.
\end{prop}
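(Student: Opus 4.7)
My plan is to prove the two preservation properties separately, handling finite products first via the standard central-idempotent decomposition of semimodules over a product semiring, and then handling direct limits by invoking Proposition~\ref{prop:410} in both directions.

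For finite products, let $S = S_{1} \times \cdots \times S_{n}$ with canonical projections $\pi_{i} \colon S \to S_{i}$ and central idempotents $e_{i} \in S$ (the units of $S_{i}$) satisfying $1 = e_{1} + \cdots + e_{n}$ and $e_{i} e_{j} = 0$ for $i \ne j$. I will show that the map
\[ \Phi := (\mathcal{SV}(\pi_{1}), \ldots, \mathcal{SV}(\pi_{n})) \colon \mathcal{SV}(S) \to \textstyle\prod_{i=1}^{n} \mathcal{SV}(S_{i}) \]
is an isomorphism in $\mathcal{C}$. Well-definedness is ensured because each extension functor $(\pi_{i})_{\#}$ preserves strong projectivity by Proposition~\ref{prop:48}\,(1). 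For the inverse, given a tuple $(\overline{P_{1}}, \ldots, \overline{P_{n}})$ with each $P_{i}$ finitely generated strongly projective over $S_{i}$, each restriction $\pi_{i}^{\#}(P_{i})$ is finitely generated strongly projective over~$S$ by Proposition~\ref{prop:48}\,(2), since $\pi_{i}^{\#}(S_{i}) = e_{i} S$ is a direct summand of $S_{S}$; hence $\bigoplus_{i} \pi_{i}^{\#}(P_{i})$ provides a preimage. That these assignments are mutually inverse follows from the natural decomposition $P \cong \bigoplus_{i} \pi_{i}^{\#}((\pi_{i})_{\#}(P))$, where $(\pi_{i})_{\#}(P) \cong P e_{i}$ as $S_{i}$-semimodules --- exactly the decomposition already used in the proof of Theorem~\ref{thm:49}. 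Finally, $\Phi$ sends the order-unit $\overline{S}$ to $(\overline{S_{1}}, \ldots, \overline{S_{n}})$, confirming that it is a morphism in~$\mathcal{C}$.

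For direct limits, let $\{ S_{i} \mid \phi_{ij} \}$ be a direct system of semirings with limit~$S$ and canonical maps $\phi_{i} \colon S_{i} \to S$. The induced cocone $\mathcal{SV}(\phi_{i}) \colon (\mathcal{SV}(S_{i}), \overline{S_{i}}) \to (\mathcal{SV}(S), \overline{S})$ in~$\mathcal{C}$ yields a canonical morphism $\Psi \colon \varinjlim_{I} (\mathcal{SV}(S_{i}), \overline{S_{i}}) \to (\mathcal{SV}(S), \overline{S})$, and I need to show $\Psi$ is bijective. Surjectivity is exactly Proposition~\ref{prop:410}\,(1): every finitely generated strongly projective right $S$-semimodule~$P$ has the form $Q \otimes_{S_{m}} S = (\phi_{m})_{\#}(Q)$ for some finitely generated strongly projective right $S_{m}$-semimodule~$Q$, so $\overline{P}$ lies in the image of $\mathcal{SV}(\phi_{m})$. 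Injectivity is Proposition~\ref{prop:410}\,(2): if $\overline{P}, \overline{Q} \in \mathcal{SV}(S_{i})$ satisfy $\mathcal{SV}(\phi_{i})(\overline{P}) = \mathcal{SV}(\phi_{i})(\overline{Q})$, \ie, $P \otimes_{S_{i}} S \cong Q \otimes_{S_{i}} S$, then some $k \ge i$ already satisfies $P \otimes_{S_{i}} S_{k} \cong Q \otimes_{S_{i}} S_{k}$, so $\overline{P}$ and $\overline{Q}$ are identified in the colimit monoid. Preservation of order-units is immediate from $\mathcal{SV}(\phi_{i})(\overline{S_{i}}) = \overline{S_{i} \otimes_{S_{i}} S} = \overline{S}$.

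The main technical point will be the natural decomposition $P \cong \bigoplus_{i} \pi_{i}^{\#}((\pi_{i})_{\#}(P))$ in the finite-product case; one must verify that $P \otimes_{S} S_{i} \cong P e_{i}$ as $S_{i}$-semimodules and that the evident direct-sum map is an isomorphism of $S$-semimodules. Everything else amounts to bookkeeping: the conicality of the monoids is inherited automatically, and the direct-limit half is essentially a repackaging of Proposition~\ref{prop:410} in the category~$\mathcal{C}$.
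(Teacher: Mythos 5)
Your proof is correct and follows essentially the same route as the paper: the direct-limit half is exactly the paper's argument (surjectivity from Proposition~\ref{prop:410}\,(1), injectivity from Proposition~\ref{prop:410}\,(2)), and your finite-product half via the central idempotents $e_{i}$ and the decomposition $P \cong \bigoplus_{i} P e_{i}$ is precisely the argument the paper leaves implicit with the phrase ``using the same argument above and Proposition~\ref{prop:48}'' (and which it does spell out later, in the proof of Proposition~\ref{prop:615}). No gaps.
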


\begin{proof}
  We adapt the proof of~\cite[Prop.~15.11]{g:vnrr} to our situation.
  Let~$S$ be the direct limit of a direct system $\{ S_{i} \mid
  \phi_{ij} \}$ of semirings, and for each~$i$ let $\phi_{i} \colon
  S_{i} \to S$ be the canonical homomorphism.  Let $(M, u)$ be the
  direct limit of the monoids $(\mathcal{SV}(S_{i}), \overline{S_{i}})$
  in~$\mathcal{C}$, and for each~$i$ let $f_{i} \colon (\mathcal{SV}(S_{i}),
  \overline{S_{i}}) \to (M,u)$ be the canonical homomorphism.  We have
  morphisms $\mathcal{SV}(\phi_{i}) \colon (\mathcal{SV}(S_{i}),
  \overline{S_{i}}) \to (\mathcal{SV}(S), \overline{S})$ such that
  $\mathcal{SV}(\phi_{j}) \mathcal{SV}(\phi_{ij}) = \mathcal{SV}(\phi_{i})$
  whenever $i \le j$; hence, there exists a unique monoid homomorphism
  $g \colon (M,u) \to (\mathcal{SV}(S), \overline{S})$ such that
  $g f_{i} = \mathcal{SV}(\phi _{i})$ for all~$i$.  We are going to
  prove that~$g$ is an isomorphism.

  Given $\overline{P} \in \mathcal{SV}(S)$, we see from 
  Proposition~\ref{prop:410}\,(1) that there is a finitely generated
  strongly projective right $S_{i}$-semimodule~$Q$ for some~$i$ with
  $Q \otimes_{S_{i}} S \cong P$.  Then $\overline{Q} \in
  \mathcal{SV}(S_{i})$, and so $f_{i}(\overline{Q})\in M$, and also
  \[ g f_{i}(\overline{Q}) = \mathcal{SV}(\phi_{i})(\overline{Q})
  = \overline{Q \otimes_{S_{i}} S} = \overline{P} . \]%
  This implies that~$g$ is surjective.

  Now, let $x, y \in M$ be such that $g(x) = g(y)$.  Then there exist~$i$
  and~$j$ such that $f_{i}(\overline{P}) = x$ and $f_{j}(\overline{Q}) = y$
  for some $\overline{P} \in \mathcal{SV}(S_{i})$ and $\overline{Q}
  \in \mathcal{SV}(S_{j})$.  Choosing~$k$ such that $i, j \le k$, we have
  that \[ \mathcal{SV}(\phi_{k}) \mathcal{SV}(\phi_{ik})(\overline{P})
  = g f_{i}(\overline{P}) = g(x) = g(y) = g f_{j}(\overline{Q})
  = \mathcal{SV}(\phi_{k}) \mathcal{SV}(\phi_{jk})(\overline{Q}) \,, \]
  which means $\mathcal{SV}(\phi_{ik})(\overline{P}) \otimes_{S_{k}} S
  = \mathcal{SV}(\phi_{jk})(\overline{Q}) \otimes_{S_{k}} S$.  
  By Proposition~\ref{prop:410}\,(2),
  \[ \mathcal{SV}(\phi_{ik})(\overline{P}) \otimes_{S_{k}} S_{t}
  = \mathcal{SV}(\phi_{jk})(\overline{Q}) \otimes_{S_{k}} S_{t} \]
  for some $t \ge k$.  Then, $\mathcal{SV}(\phi_{it})(\overline{P})
  = \mathcal{SV}(\phi_{kt}) \mathcal{SV}(\phi _{ik})(\overline{P})
  = \mathcal{SV}(\phi_{ik}) (\overline{P}) \otimes_{S_{k}} S_{t}
  = \mathcal{SV}(\phi_{jk}) (\overline{Q}) \otimes_{S_{k}} S_{t}
  = \mathcal{SV}(\phi_{kt}) \mathcal{SV} (\phi_{jk}) (\overline{Q})
  = \mathcal{SV}(\phi_{jt}) (\overline{Q})$, and hence, 
  \[ x = f_{i}(\overline{P}) = f_{t} \mathcal{SV}(\phi _{it})
  (\overline{P}) = f_{t} \mathcal{SV}(\phi_{jt}) (\overline{Q})
  = f_{j}(\overline{Q}) = y \,. \]
  Thus~$g$ is injective.

  Using the same argument above and Proposition~\ref{prop:48}, we get
  that the functor $(\mathcal{SV}(-), \overline{-})$ preserves finite
  products.
\end{proof}

The following fact shows that every free commutative monoid of finite
rank occurs as a monoid of isomorphism classes of strongly projective
semimodules of a zerosumfree semisimple semiring.

\begin{prop}\label{prop:55}
  Let~$S$ be a semisimple semiring with direct product representation
  \[ S \,\cong\, M_{n_{1}}(D_{1}) \times \cdots \times M_{n_{r}}(D_{r}) \,, \]
  where $D_{1}, \dots, D_{r}$ are division semirings.  For each
  $1 \le j \le r$ and $1 \le i \le n_{j}$, let~$e_{ii}^{(j)}$ be the
  $n_{i}\times n_{i}$ matrix units in $M_{n_{j}}(D_{j})$.  Then,
  $\mathcal{SV}(S)$ is a free commutative monoid with basis
  $\{ \overline{{e_{11}}^{(1)} S}, \dots, \overline{{e_{11}}^{(r)} S} \}$,
  and $(\mathcal{SV}(S), \overline{S}) \cong \big( (\mathbb{Z}^{+})^{r},
  (n_{1}, \dots, n_{r}) \big)$.
\end{prop}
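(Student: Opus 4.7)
The plan is to deduce Proposition~\ref{prop:55} as a direct corollary of Theorem~\ref{thm:49}\,(2), together with a short bookkeeping computation of the class $\overline{S}$ in the basis produced by that theorem.

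First I would invoke Theorem~\ref{thm:49}\,(2): every finitely generated strongly projective right $S$-semimodule is isomorphic to $(e_{11}^{(1)} S)^{k_{1}} \oplus \cdots \oplus (e_{11}^{(r)} S)^{k_{r}}$ for a \emph{unique} tuple $(k_{1}, \dots, k_{r}) \in (\mathbb{Z}^{+})^{r}$.  Translating this statement into the language of isomorphism classes, it says exactly that every element of $\mathcal{SV}(S)$ has a unique representation as a $\mathbb{Z}^{+}$-linear combination of the classes $\overline{e_{11}^{(1)} S}, \dots, \overline{e_{11}^{(r)} S}$; hence $\mathcal{SV}(S)$ is the free commutative monoid on this set of $r$ generators, and the assignment $(k_{1}, \dots, k_{r}) \mapsto \sum_{j} k_{j}\,\overline{e_{11}^{(j)} S}$ is a monoid isomorphism $(\mathbb{Z}^{+})^{r} \xrightarrow{\sim} \mathcal{SV}(S)$.

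Next I would identify the image of $\overline{S}$ under the inverse of this isomorphism.  Using the direct product representation, $S_{S} = \bigoplus_{j=1}^{r} \pi_{j}^{\#}(M_{n_{j}}(D_{j}))$ as a right $S$-semimodule.  Inside each block, the matrix-unit decomposition $M_{n_{j}}(D_{j}) = \bigoplus_{i=1}^{n_{j}} e_{ii}^{(j)} M_{n_{j}}(D_{j})$ together with the well-known isomorphisms $e_{ii}^{(j)} M_{n_{j}}(D_{j}) \cong e_{11}^{(j)} M_{n_{j}}(D_{j})$ (given by left multiplication by the matrix units $e_{i1}^{(j)}$, $e_{1i}^{(j)}$) yield, after pulling back along $\pi_{j}$, an isomorphism of right $S$-semimodules $\pi_{j}^{\#}(M_{n_{j}}(D_{j})) \cong (e_{11}^{(j)} S)^{n_{j}}$.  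Assembling these gives
\[ S_{S} \,\cong\, (e_{11}^{(1)} S)^{n_{1}} \oplus \cdots \oplus (e_{11}^{(r)} S)^{n_{r}} , \]
so $\overline{S} = n_{1} \overline{e_{11}^{(1)} S} + \cdots + n_{r} \overline{e_{11}^{(r)} S}$, which corresponds to $(n_{1}, \dots, n_{r})$ under the isomorphism.  Combining both parts delivers the required isomorphism $(\mathcal{SV}(S), \overline{S}) \cong \bigl((\mathbb{Z}^{+})^{r}, (n_{1}, \dots, n_{r})\bigr)$ in the category $\mathcal{C}$.

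There is essentially no obstacle here, since the genuine content of the proposition --- the uniqueness of the decomposition of strongly projective semimodules --- has already been established in Theorem~\ref{thm:49}; the present statement is a repackaging of that result together with a routine identification of $\overline{S}$.  The only minor point to be careful about is ensuring that the isomorphism $e_{ii}^{(j)} M_{n_{j}}(D_{j}) \cong e_{11}^{(j)} M_{n_{j}}(D_{j})$ is one of right $M_{n_{j}}(D_{j})$-semimodules (it is, since left multiplication by matrix units commutes with the right action), so that it descends to an isomorphism of right $S$-semimodules after applying $\pi_{j}^{\#}$.
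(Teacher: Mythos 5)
Your proof is correct. The only difference from the paper's own argument is one of routing: the paper first invokes Proposition~\ref{prop:54} (the functor $(\mathcal{SV}(-),\overline{-})$ preserves finite products) to split $\mathcal{SV}(S)$ as $\mathcal{SV}(M_{n_{1}}(D_{1}))\times\cdots\times\mathcal{SV}(M_{n_{r}}(D_{r}))$, and then applies Theorem~\ref{thm:49}\,(2) only to each single matrix block to see that each factor is free of rank one; you instead apply Theorem~\ref{thm:49}\,(2) directly to the whole semisimple semiring~$S$, which is legitimate since that theorem is stated (and proved) for an arbitrary direct product representation, and its uniqueness clause is literally the statement that $\mathcal{SV}(S)$ is free on the classes $\overline{e_{11}^{(j)}S}$. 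Your route is therefore slightly shorter and bypasses Proposition~\ref{prop:54} entirely. You also spell out the computation $\overline{S}=\sum_{j}n_{j}\,\overline{e_{11}^{(j)}S}$ via the matrix-unit decomposition $S_{S}\cong\bigoplus_{j}(e_{11}^{(j)}S)^{n_{j}}$, a step the paper leaves implicit in ``we immediately get the statement'' but which is needed to pin down the order-unit as $(n_{1},\dots,n_{r})$; this is a welcome addition. What the paper's detour through Proposition~\ref{prop:54} buys is consistency with the surrounding machinery (the same product-preservation argument is reused for $K_{0}$ and $SK_{0}$ later), but for this particular proposition your direct argument is entirely adequate.
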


\begin{proof}
  According to Proposition~\ref{prop:54}, we have 
  \[ \mathcal{SV}(S) \,\cong\, \mathcal{SV}(M_{n_{1}}(D_{1}))
  \oplus \cdots \oplus \mathcal{SV}(M_{n_{r}}(D_{r})) \,. \]
  Moreover, for each $1 \le j \le r$, the monoid $\mathcal{SV}%
  (M_{n_{j}}(D_{j}))$ is a free commutative monoid with basis
  $\{ \overline{{e_{11}}^{(j)} S} \}$, by Theorem~\ref{thm:49}\,(2).
  Using those observations, we immediately get the statement.
\end{proof}

From Corollary~\ref{cor:35} and Proposition~\ref{prop:55} we readily
see that every free commutative monoid of finite rank appears as a
monoid $\mathcal{SV}(S)$ for some additively idempotent
congruence-semisimple semiring~$S$.  Motivated by this remark and the
Realization Problem, which constitutes a very active area in
non-stable $K$-theory (we refer the reader to \cite{ag:trpfswmatap}
and the references given there for a recent progress on the
Realization Problem), it is natural to pose the following problem.

\begin{prob}
  Describe commutative monoids which can be realized as either a
  monoid $\mathcal{SV}(S)$ or $\mathcal{V}(S)$ for an additively
  idempotent semiring~$S$.
\end{prob}

Now we define a central notion for the present article, which has been
investigated in Section~3 for a special case.

\begin{defn}
  Let~$F$ be a semifield.
  \begin{enumerate}[label=(\arabic*)]
  \item A \emph{matricial $F$-algebra} is an $F$-algebra isomorphic to
    $M_{n_{1}}(F) \times \cdots \times M_{n_{r}}(F)$, for some positive
    integers $n_{1}, \dots, n_{r}$.
  \item An $F$-algebra is said to be \emph{ultramatricial} if it is
    isomorphic to the direct limit (in the category of unital $F$-algebras)
    of a sequence $S_{1} \to S_{2} \to \cdots$ of matricial $F$-algebras.
  \end{enumerate}
\end{defn}

We note the following simple observation, pointing out that its
justification differs significantly from the arguments in the
``classical'' ring case.

\begin{prop}\label{prop:57}
  An algebra~$S$ over a semifield~$F$ is ultramatricial if and only
  if~$S$ is the union of an ascending sequence $S_{1} \subseteq S_{2}
  \subseteq \cdots$ of matricial subalgebras.
\end{prop}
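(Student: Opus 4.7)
The ``if'' direction is immediate: if $S = \bigcup_n S_n$ is an ascending union of matricial $F$-subalgebras, then $(S_n)$ with inclusion transition maps forms a direct system in the category of unital $F$-algebras whose colimit is $S$, so $S$ is ultramatricial by definition.

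For the ``only if'' direction, assume $S = \varinjlim_n S_n$ with matricial $F$-algebras $S_n$, transition maps $\phi_{nm}$, and canonical maps $\phi_n \colon S_n \to S$.  Set $T_n := \phi_n(S_n) \subseteq S$.  The compatibility $\phi_{n+1} \circ \phi_{n,n+1} = \phi_n$ gives $T_n \subseteq T_{n+1}$, and the construction of the direct limit yields $S = \bigcup_n T_n$.  It remains to show each $T_n$ is a matricial $F$-subalgebra of~$S$.

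Here lies the main obstacle, and the reason this direction differs significantly from the classical ring case.  In the ring setting, $T_n \cong S_n/\ker\phi_n$ is a quotient of a matricial algebra by an ideal, and the fact that ideals of $M_{n_1}(F) \times \cdots \times M_{n_r}(F)$ are products of matrix factors immediately implies $T_n$ is matricial.  In the semiring setting, however, kernels of homomorphisms are congruences -- strictly richer than ideals -- and congruence quotients of matricial $F$-algebras need not remain matricial.  My plan is therefore to bypass quotient arguments and prove directly that each $\phi_n$ is \emph{injective}, whence $T_n \cong S_n$ is automatically matricial.

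To prove injectivity of $\phi_n$, decompose $S_n$ via its central idempotents $u_j$ corresponding to the matrix blocks $M_{k_j}(F)$; since the images $\phi_n(u_j) \in S$ are pairwise orthogonal idempotents summing to~$1$, it suffices to show that each restriction $\psi := \phi_n|_{M_{k_j}(F)} \colon M_{k_j}(F) \to \phi_n(u_j) \, S \, \phi_n(u_j)$, a unital $F$-algebra map into the corner, is injective.  Using the matrix units $e_{ij}$ of $M_{k_j}(F)$: first $\psi(e_{11}) \neq 0$, since otherwise $\psi(e_{1j}) = \psi(e_{11})\psi(e_{1j}) = 0$ for all $j$, hence $\psi(e_{ij}) = \psi(e_{i1})\psi(e_{1j}) = 0$ for all $i,j$, forcing $\psi(1) = 0$ and contradicting unitality.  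Next, for $a = \sum a_{ij} e_{ij}$ and $b = \sum b_{ij} e_{ij}$ with $\psi(a) = \psi(b)$, pinching by $\psi(e_{1i})$ on the left and $\psi(e_{j1})$ on the right yields $a_{ij}\, \psi(e_{11}) = b_{ij}\, \psi(e_{11})$ in $S$ for all $i,j$.  Finally, writing $\psi(e_{11}) = \phi_\ell(y)$ with $y \in S_\ell$, the hypothesis $\psi(e_{11}) \neq 0$ forces $\phi_{\ell m}(y) \neq 0$ in the matricial $F$-algebra $S_m$ for every $m \geq \ell$; and the equation transfers, for sufficiently large $m$, to $a_{ij} \cdot \phi_{\ell m}(y) = b_{ij} \cdot \phi_{\ell m}(y)$ in $S_m$.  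Reading off any nonzero entry of $\phi_{\ell m}(y)$ and using that $F$ is a semifield (so nonzero elements cancel), we conclude $a_{ij} = b_{ij}$, yielding injectivity of $\psi$ and hence of $\phi_n$.  The delicate point is this final transfer back into a matricial $S_m$, where the scalar action of~$F$ is visibly faithful.
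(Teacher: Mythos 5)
Your ``if'' direction and the overall strategy for the ``only if'' direction (reduce to the images $T_n=\phi_n(S_n)$, decompose via the central block idempotents, pinch with matrix units to reduce to scalars, and transfer a scalar identity back into some matricial $S_m$ where nonzero elements of the semifield~$F$ cancel) are sound and closely parallel the paper's computation. But there is a genuine error in the key claim: the canonical maps $\phi_n\colon S_n\to S$ of a direct limit need \emph{not} be injective, so $T_n\cong S_n$ can fail. Nothing in the definition of an ultramatricial algebra forces the transition maps to be injective; for instance, take $S_1=F\times F$ and $S_m=F$ for $m\ge 2$ with $\phi_{12}(a,b)=a$. Then $S\cong F$ and $\phi_1$ kills an entire matrix block. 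Your argument breaks at the exact point where you claim $\psi(e_{11})\ne 0$ ``contradicting unitality'': the map $\psi$ lands in the corner $\phi_n(u_j)\,S\,\phi_n(u_j)$, whose unit is $\phi_n(u_j)$, and this unit may well be~$0$, in which case $\psi$ is the zero map and there is no contradiction. In other words, you have implicitly assumed $\phi_n(u_j)\ne 0$ for every block, which is precisely what can fail.

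The repair is small and brings you to what the paper actually proves: for each block either $\phi_n(u_j)=0$ (the block is annihilated) or your injectivity argument applies verbatim, so $T_n\cong\prod_{j\in J}M_{k_j}(F)$ where $J$ is the set of surviving blocks ($J\ne\emptyset$ since the $\phi_n(u_j)$ sum to $1\ne 0$); this is still a matricial algebra, which is all the proposition requires. The paper phrases this via the kernel congruence, showing that its restriction $\rho_j$ to each block is either the identity or the universal congruence --- the dichotomy you are missing --- using essentially your computation ($aE_{jk}\,\rho\,bE_{jk}$ for all $j,k$, hence $aI\,\rho\,bI$, hence $a\phi(I)=b\phi(I)$, and then either $\phi(I)\ne0$ forces $a=b$ or $\phi(I)=0$ forces $\rho$ universal). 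Your explicit handling of the transfer into a finite stage $S_m$ where the $F$-action is visibly faithful is a point the paper passes over quickly, and is worth keeping once the dichotomy is stated correctly.
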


\begin{proof}
  Suppose that~$S$ is the direct limit of a sequence $S_1 \to S_2 \to \dots$
  of matricial $F$-algebras with canonical homomorphisms $\phi_i \colon
  S_i \to S$, then~$S$ is the union of the ascending sequence
  $\phi_1(S_1) \subseteq \phi_2(S_2) \subseteq \cdots$.  Therefore, it
  is left to show that each $\phi_i(S_i)$ is a matricial $F$-algebra.
  
  This follows, as is easy to verify, from the following claim.  If~$R$
  and~$S$ are matricial $F$-algebras, where $R = R_1 \times \dots
  \times R_r$ and $R_i = M_{n_i}(F)$ for some positive integers~$n_i$,
  and if $\phi \colon R \to S$ is any algebra homomorphism, then $\phi(R)
  \cong \prod_{j \in J} R_j$ for some subset $J \subseteq \{ 1, \dots, r \}$;
  thus $\phi(R)$ is also a matricial $F$-algebra.

  To prove this claim, note that $\phi \colon R \to S$ as above induces an
  isomorphism $R / \ker(\phi) \to \phi(R)$, $\overline{r} \mapsto \phi(r)$,
  where $\ker(\phi) = \{ (x, y) \in R \times R \mid \phi(x) = \phi(y) \}$
  is its kernel congruence.  Using the congruences $\rho_{i}
  := \ker(\phi) \,\cap\, R_i \!\times\! R_i$, we have
  \[ \phi(R) \,\cong\, R / \ker(\phi) \,\cong\, R_1 / \rho_{1}
  \times \cdots \times R_r / \rho_{r} \,. \]
  We argue that each~$\rho_i$ is a trivial congruence on $R_i$, which
  proves the claim.  Now the semifield~$F$ is either a field or a
  zerosumfree semifield.  If~$F$ is a field, then $\rho_i$ is
  obviously a trivial congruence, since $R_i$ is a simple ring.

  Assume then that~$F$ is a zerosumfree semifield, and that~$\rho_{i}$
  is not the identity one.  There are distinct elements $A = (a_{jk})$
  and $B = (b_{jk})$ in $R_i$ such that $A \,\rho_i\, B$, thus $a := a_{jk}
  \ne b_{jk} =: b$ for some $j, k \in \{ 1, \dots, n_i \}$.  Denoting by
  $E_{jk}$ the matrix units in $R_i$ and using $E_{jk} E_{kt} = E_{jt}$,
  we readily infer that $a E_{jk} \,\rho_i\, b E_{jk}$ for all $j, k$,
  whence $a I_{n_i} \,\rho_i\, b I_{n_i}$.  This implies that 
  \[ a \phi(I_{n_i}) = \phi(a I_{n_i}) = \phi(b I_{n_i}) = b \phi(I_{n_i}) \,. \]
  If $\phi(I_{n_i}) \ne 0$, then since~$S$ is a matricial $F$-algebra
  we must have that $a = b$, giving a contradiction. 
  Therefore, $\phi(I_{n_i}) = 0$, \ie, $I_{n_i} \,\rho_i\, 0$, which
  implies $C = C I_{n_i} \,\rho_i\, 0 I_{n_i} = 0$ for all $C \in R_i$,
  so that $\rho_i = R_i \times R_i$ is the universal one.
\end{proof}

The subsequent fact, which immediately follows from
Propositions~\ref{prop:54} and~\ref{prop:55}, provides some
information on the monoid $\mathcal{SV}(S)$ for an ultramatricial
$F$-algebra over a semifield~$F$.

\begin{rem}\label{rem:58}
  Let~$S$ be a direct limit of a sequence $S_1 \to S_2 \to \dots$ of
  matricial $F$-algebras with canonical homomorphisms $\phi_i \colon
  S_i \to S$, and suppose that $S_i = \smash{M_{n_1^i}(F) \times \dots
    \times M_{n_{r(i)}^i}(F)}$ for some positive integers $n_1^i, \dots,
  n_{r(i)}^i$, for each~$i$, denoting by ${e_{11}}^{(j,i)} \in S_i$ the
  matrix units in $\smash{M_{n_j^i}(F)}$.  Then $\mathcal{SV}(S)$ is
  a cancellative monoid generated by the set $\{
  \overline{\phi_i({e_{11}}^{(j,i)}) S} \mid 1 \le j \le r(i) ,\,
  i = 1, 2, \dots \}$.
\end{rem}

In order to establish the main results of this section, we state the
following useful lemma, which proof is essentially based on the one of
\cite[Lem.~15.23]{g:vnrr}.

\begin{lem}\label{lem:59}
  Let~$F$ be a semifield, let~$S$ be a matricial $F$-algebra, and
  let~$T$ be any unital $F$-algebra.
  \begin{enumerate}[label=(\arabic*)]
  \item For any morphism $f \colon (\mathcal{SV}(S), \overline{S})
    \to (\mathcal{SV}(T), \overline{T})$ in the category~$\mathcal{C}$,
    there exists an $F$-algebra homomorphism $\phi \colon S \to T$
    such that $\mathcal{SV}(\phi) = f$.
  \item Let $\phi, \psi \colon S \to T$ be $F$-algebra homomorphisms.
    If $\mathcal{SV}(\phi) = \mathcal{SV}(\psi)$, then there exists an
    inner automorphism $\theta$ of~$S$ such that $\phi = \theta \psi$.
    Moreover, if in addition~$T$ is an ultramatricial $F$-algebra, then
    $\mathcal{SV}(\phi) = \mathcal{SV}(\psi)$ if and only if there exists
    an inner automorphism~$\theta$ of~$S$ such that $\phi = \theta \psi$.
  \end{enumerate}
\end{lem}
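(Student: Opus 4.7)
The plan is to work with the direct product representation of the matricial $F$-algebra $S$. Writing $S \cong M_{n_{1}}(F) \times \cdots \times M_{n_{r}}(F)$ with matrix units $e_{ij}^{(k)}$ in the $k$-th factor, Proposition~\ref{prop:55} identifies $\mathcal{SV}(S)$ as the free commutative monoid on the classes $\overline{e_{11}^{(k)} S}$, with order-unit $\overline{S} = \sum_{k=1}^{r} n_{k} \overline{e_{11}^{(k)} S}$.  Both parts then reduce to tracking what these morphisms do on these free generators.

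For part~(1), a morphism $f$ in $\mathcal{C}$ is determined by the choices $f(\overline{e_{11}^{(k)} S}) = \overline{P_{k}}$ for finitely generated strongly projective right $T$-semimodules $P_{k}$, and the order-unit condition unpacks to a right $T$-semimodule isomorphism $T \cong \bigoplus_{k=1}^{r} P_{k}^{n_{k}}$.  From this decomposition I would extract a complete orthogonal system of idempotents $\{f_{ii}^{(k)}\}$ in $T$ with $f_{ii}^{(k)} T \cong P_{k}$ and, via the isomorphisms between the summands $f_{ii}^{(k)} T$ for fixed $k$, produce transition elements $f_{ij}^{(k)} \in f_{ii}^{(k)} T f_{jj}^{(k)}$ satisfying the standard matrix unit relations.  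Sending $e_{ij}^{(k)} \mapsto f_{ij}^{(k)}$ and extending $F$-linearly then yields the required $F$-algebra homomorphism $\phi \colon S \to T$; since $\mathcal{SV}(\phi)(\overline{e_{11}^{(k)} S}) = \overline{f_{11}^{(k)} T} = \overline{P_{k}}$ on the free generators, we obtain $\mathcal{SV}(\phi) = f$.

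For part~(2), the hypothesis $\mathcal{SV}(\phi) = \mathcal{SV}(\psi)$ provides right $T$-semimodule isomorphisms $\psi(e_{11}^{(k)}) T \cong \phi(e_{11}^{(k)}) T$ for each $k$, which I would extend via the matrix unit actions to a right $T$-semimodule automorphism of the regular semimodule $T_{T}$.  Identifying $\on{End}_{T}(T_{T}) \cong T$ via left multiplication, this automorphism is left multiplication by some unit $u \in T$, and conjugation by $u$ produces the sought inner automorphism $\theta$ of~$T$ satisfying $\phi = \theta \psi$ (the reference to~$S$ in the statement being read as a typo for~$T$, consistently with the ring-theoretic model \cite[Lem.~15.23]{g:vnrr}).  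The reverse direction in the ``moreover'' clause is formal: for any inner automorphism $\theta_{u}$ of~$T$ and any idempotent $e \in T$, left multiplication by $u^{-1}$ is a right $T$-semimodule isomorphism $ueu^{-1} T \to eT$, so $\mathcal{SV}(\theta_{u}) = \on{id}$ and hence $\mathcal{SV}(\phi) = \mathcal{SV}(\theta\psi) = \mathcal{SV}(\psi)$ whenever $\phi = \theta\psi$ with $\theta$ inner.

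The principal technical hurdle I anticipate is ensuring the two key identifications underpinning the above liftings transfer cleanly from rings to semirings: namely, the natural bijection $\on{Hom}_{T}(eT, fT) \cong fTe$ (so that each module isomorphism between cyclic summands is realised by an honest element of~$T$ whose product relations recover the idempotents), and the identification of $\on{End}_{T}(T_{T})$ with~$T$ under which right $T$-semimodule automorphisms of $T_{T}$ correspond to two-sided units of~$T$.  Both hold for semirings by direct verification, after which the classical assembly argument --- packaging the transition elements $f_{ij}^{(k)}$ in part~(1) as matrix units and the corresponding sum $u = \sum_{k,i} a_{ii}^{(k)}$ in part~(2) as a two-sided unit of~$T$ --- proceeds as in \cite[Lem.~15.23]{g:vnrr}.
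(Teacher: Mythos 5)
Your proposal follows essentially the same route as the paper's proof (which is modelled on \cite[Lem.~15.23]{g:vnrr}): part~(1) is handled by lifting the values of~$f$ on the free basis $\{\overline{e_{11}^{(k)}S}\}$ of $\mathcal{SV}(S)$ to a complete system of matrix units in~$T$ and mapping matrix units to matrix units, and part~(2) by assembling the isomorphisms $\phi(e_{11}^{(k)})T \cong \psi(e_{11}^{(k)})T$ into a two-sided unit $x=\sum_{i,j} g_{j1}^{(i)}x_i h_{1j}^{(i)}$ whose conjugation gives the inner automorphism of~$T$ (and you correctly identify the ``of~$S$'' in the statement as a typo for~$T$). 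The only point to make explicit in the converse of the ``moreover'' clause is that knowing $\mathcal{SV}(\theta)(\overline{eT})=\overline{eT}$ for idempotents~$e$ suffices because, by Remark~\ref{rem:58}, $\mathcal{SV}(T)$ is generated by such classes when~$T$ is ultramatricial --- which is exactly how the paper concludes.
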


\begin{proof}
  There are orthogonal central idempotents $e_{1}, \dots, e_{r} \in S$
  with $e_{1} + \ldots + e_{r} = 1$ and each $e_{i} S \cong M_{n_i}(F)$
  for some positive integer $n_i$.  For each~$i$, denoting by
  $\smash{e_{jk}^{(i)}} \in e_{i} S$ the matrix units, we have that
  $e_{11}^{(i)} + \ldots + e_{n_i n_i}^{(i)} = e_{i}$.  According to
  Proposition~5.5, $\mathcal{SV}(S)$ is a free commutative monoid with
  basis $\{\overline{{e_{11}}^{(1)} S}, \dots, \overline{{e_{11}}^{(r)} S}\}$.

  (1) For each~$i$, we have $\overline{e_{i} S} \in \mathcal{SV}(S)$ and
  so $f(\overline{e_{i}S}) \in \mathcal{SV}(T)$, \ie, $f(\overline{e_{i} S})
  = \overline{P_{i}}$ for some finitely generated strongly projective right
  $T$-semimodule $P_{i}$.  Since
  \[ \overline{P_{1} \oplus \ldots \oplus P_{r}} = \overline{P_{1}} + \ldots
  + \overline{P_{r}} = f(\smsum_{i=1}^{r} \overline{e_{i} S}) = f(\overline{S})
  = \overline{T} \,, \]
  we have $P_{1} \oplus \cdots \oplus P_{r} \cong T$ as right $T$-semimodules.
  Consequently, there exist orthogonal idempotents $g_{1},\dots,g_{n}\in T$ such
  that $g_{1}+\cdots +g_{n}=1$ and each $g_{i}T\cong P_{i}$. Note that each $%
  \overline{g_{i}T}=\overline{P_{i}}=f(\overline{e_{i}S})$.

  Furthermore, for each~$i$, we have $f(\overline{{e_{11}}^{(i)} S}) =
  \overline{Q_{i}}$ for some finitely generated strongly projective
  right $T$-semimodule $Q_{i}$.  Because
  \[ \overline{Q_{i}^{n_i}} = f(n_i \, \overline{{e_{11}}^{(i)} S})
  = f(\overline{{e_{11}}^{(i)} S} + \ldots + \overline{{e_{n_i n_i}}^{(i)} S})
  = f(\overline{e_{i} S}) = \overline{g_{i} T} \,, \]
  we have $Q_{i}^{n_i} \cong g_{i}T$.  As a result, there exist $n_i \times n_i$
  matrix units $\smash{g_{jk}^{(i)}} \in g_{i}Tg_{i}$ such that $g_{11}^{(i)} T \cong
  Q_{i}$ and $g_{11}^{(i)} + \ldots + g_{n_i n_i}^{(i)} = g_{i}$.  Note that 
  $\overline{{g_{11}}^{(i)} T} = \overline{Q_{i}} = f(\overline{{e_{11}}^{(i)}
    S})$.  For every~$i$, there is a unique $F$-algebra homomorphism from 
  $e_{i}S$ into $g_{i} T g_{i}$ sending $e_{jk}^{(i)}$ to $g_{jk}^{(i)}$, for all 
  $j, k = 1, \dots, n_i$.  Consequently, there is a unique $F$-algebra
  homomorphism $\phi \colon S \to T$ such that $\smash{\phi(e_{jk}^{(i)})
    = g_{jk}^{(i)}}$ for all $i, j, k$.  Then 
  \[ \mathcal{SV}(\phi) (\overline{{e_{11}}^{(i)} S}) 
  = \overline{\phi({e_{11}}^{(i)}) T} = \overline{{g_{11}}^{(i)} T }
  = f(\overline{{e_{11}}^{(i)} S}) \]%
  for all $i = 1, \dots, r$.  \sloppy Since $\mathcal{SV}(S)$ is a free 
  commutative monoid with basis $\{ \overline{{e_{11}}^{(1)} S}, \dots, 
  \overline{{e_{11}}^{(r)} S} \}$, we conclude that $\mathcal{SV}(\phi) = f$.

  (2) Assume that $\mathcal{SV}(\phi) = \mathcal{SV}(\psi)$.  Set 
  $g_{jk}^{(i)} = \phi(e_{jk}^{(i)})$ and $h_{jk}^{(i)} = \psi(e_{jk}^{(i)})$
  for all $i, j, k$.  Note that the $\smash{g_{jj}^{(i)}}$ are pairwise
  orthogonal idempotents in~$T$ such that $\sum_{i=1}^{r} \sum_{j=1}^{n_i}
  g_{jj}^{(i)} = 1$, and similarly for the $h_{jj}^{(i)}$.  For $i = 1, 
  \dots, r$, we have \[ \overline{g_{11}^{(i)} T}
  = \mathcal{SV}(\phi) (\overline{e_{11}^{(i)} S})
  = \mathcal{SV}(\psi) (\overline{e_{11}^{(i)} S})
  = \overline{h_{11}^{(i)} T}; \] hence, $g_{11}^{(i)}T\cong h_{11}^{(i)}T$.
  Consequently, there are elements $x_{i}\in g_{11}^{(i)}Th_{11}^{(i)}$ and
  $y_{i}\in h_{11}^{(i)}Tg_{11}^{(i)}$ such that $x_{i}y_{i}=g_{11}^{(i)}$ and
  $y_{i}x_{i}=h_{11}^{(i)}$.

  Set $x = \sum_{i=1}^{r}\sum_{j=1}^{n_i}g_{j1}^{(i)}x_{i}h_{1j}^{(i)}$ and 
  $y = \sum_{i=1}^{r}\sum_{j=1}^{n_i}h_{j1}^{(i)}y_{i}g_{1j}^{(i)}$.  We then have 
  \begin{align*}
    xy &= \smsum_{i,k=1}^{r} \smsum_{j=1}^{n_i} \smsum_{m=1}^{n_k} g_{j1}^{(i)}
      x_{i} h_{1j}^{(i)} h_{m1}^{(k)} y_{k} g_{1m}^{(k)} \\
    &= \smsum_{i=1}^{r} \smsum_{j=1}^{n_i} g_{j1}^{(i)} x_{i} h_{1j}^{(i)}
      h_{j1}^{(i)} y_{i} g_{1j}^{(i)} 
      = \smsum_{i=1}^{r} \smsum_{j=1}^{n_i} g_{j1}^{(i)} g_{11}^{(i)} g_{1j}^{(i)}
      = \smsum_{i=1}^{r} \smsum_{j=1}^{n_i} g_{jj}^{(i)} = 1 \,,
  \end{align*}
  and, similarly, $y x = 1$.  As a result, there exists an inner
  automorphism~$\theta$ of~$T$ given by the rule $\theta(a) = x ay $ for
  all $a \in T$.

  For all $i,j,k$, we compute that 
  \begin{align*}
    xh_{jk}^{(i)} &= \smsum_{s=1}^{r} \smsum_{t=1}^{n_s} g_{t1}^{(s)}
      x_{s} h_{1t}^{(s)} h_{jk}^{(i)} = g_{j1}^{(i)} x_{i}h_{1j}^{(i)} h_{jk}^{(i)} \\
    &= g_{jk}^{(i)} g_{k1}^{(i)} x_{i} h_{1k}^{(i)}
      = \smsum_{s=1}^{n} \smsum_{t=1}^{n_s} g_{jk}^{(i)} g_{t1}^{(s)}
      x_{s} h_{1t}^{(s)} = g_{jk}^{(i)}x \,,
  \end{align*}
  whence $\theta \psi (e_{jk}^{(i)}) = x h_{jk}^{(i)} y = g_{jk}^{(i)}
  = \phi(e_{jk}^{(i)})$.  Since the $e_{jk}^{(i)}$ form a basis for~$S$
  over~$F$, we get that $\theta \psi = \phi$.

  Finally, assume that~$T$ is the limit of a sequence of matricial
  $F$-algebras, and that there is a unit $x \in T$ such that $\theta(a)
  = x a x^{-1}$ for all $a \in T$.  Given any strongly idempotent element
  $e \in T$, we have that $x e = x e x^{-1} x e \in \theta(e) T e$ and
  $e x^{-1} = e x^{-1} x e x^{-1} \in e T \theta(e)$, where $(x e) (e x^{-1})
  = \theta(e)$ and $(e x^{-1}) (x e) = e$, so that $\theta(e) T \cong e T$,
  and therefore $\mathcal{SV}(\theta)(\overline{e T}) =
  \overline{\theta(e) T} = \overline{e T}$.  Using this and
  Remark~\ref{rem:58}, we obtain that $\mathcal{SV}(\theta)$ is the
  identity map on $\mathcal{SV}(S)$.  Thus, $\mathcal{SV}(\phi) =
  \mathcal{SV}(\theta) \mathcal{SV}(\psi) = \mathcal{SV}(\psi)$,
  and we have finished the proof.
\end{proof}

Now we are ready to state the main results of this section.  The
following theorem shows a class of semirings in which the monoid
$\mathcal{SV}(S)$ determines~$S$ up to isomorphism, namely the class
of ultramatricial algebras over a semifield.  Note that its proof is
essentially based on the one of \cite[Thm.~15.26]{g:vnrr}.

\begin{thm}\label{thm:510}
  Let~$S$ and~$T$ be ultramatricial algebras over a semifield~$F$.
  Then $(\mathcal{SV}(S), \overline{S}) \cong (\mathcal{SV}(T),
  \overline{T})$ if and only if $S \cong T$ as $F$-algebras.
\end{thm}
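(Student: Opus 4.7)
The proof plan is as follows. The forward implication is immediate: an $F$-algebra isomorphism $S \cong T$ induces, by functoriality of $\mathcal{SV}$, an isomorphism of commutative monoids sending order-unit to order-unit. So the real content is the converse, and I would follow the Elliott intertwining strategy adapted to the semiring setting, in the spirit of \cite[Thm.~15.26]{g:vnrr}, using Lemma~\ref{lem:59} as the essential technical input.

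\smallskip

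By Proposition~\ref{prop:57}, I would write $S = \bigcup_{n \ge 1} S_n$ and $T = \bigcup_{n \ge 1} T_n$ as ascending unions of matricial $F$-subalgebras, and by Proposition~\ref{prop:54} the induced morphisms $\mathcal{SV}(S_n) \to \mathcal{SV}(S)$ and $\mathcal{SV}(T_n) \to \mathcal{SV}(T)$ realize these latter monoids as the corresponding direct limits in~$\mathcal{C}$. Given an isomorphism $f \colon (\mathcal{SV}(S), \overline{S}) \to (\mathcal{SV}(T), \overline{T})$ in~$\mathcal{C}$, since each $\mathcal{SV}(S_n)$ is finitely generated (Proposition~\ref{prop:55}), the composition $\mathcal{SV}(S_n) \to \mathcal{SV}(S) \to \mathcal{SV}(T)$ factors through $\mathcal{SV}(T_{m(n)})$ for some index $m(n)$; the analogous statement holds for $f^{-1}$.

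\smallskip

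Now I would carry out the zigzag construction. Choose indices $n_1 < n_2 < \cdots$ and $m_1 < m_2 < \cdots$, and use Lemma~\ref{lem:59}\,(1) to lift the factored morphisms to $F$-algebra homomorphisms
\[ S_{n_1} \xrightarrow{\phi_1} T_{m_1} \xrightarrow{\psi_1} S_{n_2}
   \xrightarrow{\phi_2} T_{m_2} \xrightarrow{\psi_2} S_{n_3} \to \cdots \]
whose induced $\mathcal{SV}$-morphisms agree with the maps determined by~$f$ and $f^{-1}$. By construction $\mathcal{SV}(\psi_k \phi_k)$ equals the $\mathcal{SV}$-morphism of the inclusion $S_{n_k} \hookrightarrow S_{n_{k+1}}$, and likewise for $\phi_{k+1} \psi_k$ on the $T$-side. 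By Lemma~\ref{lem:59}\,(2) (noting that the statement is to be applied with an inner automorphism of the target ultramatricial algebra, as its proof makes clear), at each stage I can post-compose $\psi_k$ or $\phi_{k+1}$ with a suitable inner automorphism of $S_{n_{k+1}}$ or $T_{m_{k+1}}$ to arrange that $\psi_k \phi_k$ is literally the inclusion $S_{n_k} \hookrightarrow S_{n_{k+1}}$ and, symmetrically, that $\phi_{k+1} \psi_k$ is the inclusion $T_{m_k} \hookrightarrow T_{m_{k+1}}$. This is a standard back-and-forth adjustment, exploiting that the inner-automorphism correction produces the same $\mathcal{SV}$-morphism so the subsequent stage can still be chosen to meet the compatibility requirement.

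\smallskip

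Once the diagram commutes strictly, taking the direct limit along the two interleaved chains yields mutually inverse $F$-algebra homomorphisms $\phi \colon S \to T$ and $\psi \colon T \to S$, whence $S \cong T$ as $F$-algebras. The principal obstacle, as anticipated, is the intertwining step: one must ensure that the inner-automorphism adjustments at stage~$k$ do not destroy the commutativity of earlier squares. I would resolve this by performing the adjustments in the correct order---fixing $\phi_k, \psi_k$ before choosing $\phi_{k+1}$, and then modifying $\psi_{k+1}$ rather than earlier maps---so that commutativity propagates forward and the limit construction is consistent.
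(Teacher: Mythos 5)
Your proposal is correct and follows essentially the same route as the paper: Elliott's intertwining argument via Proposition~\ref{prop:57}, Proposition~\ref{prop:54} and Lemma~\ref{lem:59}, building an alternating chain of algebra maps between finite matricial stages whose consecutive composites are (after inner-automorphism corrections of the target ultramatricial algebra, with the image then absorbed into a larger finite stage) the inclusion maps, and passing to the limit. The paper packages the same construction as two symmetric ``Claims'' and phrases all $\mathcal{SV}$-equalities after composing with the canonical maps into $\mathcal{SV}(S)$ and $\mathcal{SV}(T)$ --- which is the precise form needed, since the factorizations through finite stages are only unique in the limit --- but this is a presentational difference, not a different argument.
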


\begin{proof}
  The direction ($\Longleftarrow$) is obvious, so we show the
  direction ($\Longrightarrow$).

  Assume that $f \colon (\mathcal{SV}(S), \overline{S}) \to
  (\mathcal{SV}(T),\overline{T})$ is an isomorphism in $\mathcal{C}$.
  Using Proposition~\ref{prop:57}, we may assume that~$S$ and~$T$ are
  the union of an ascending sequence $S_{1} \subseteq S_{2} \subseteq
  \cdots$ and $T_{1} \subseteq T_{2} \subseteq \cdots$, respectively,
  of matricial subalgebras.  For each $n = 1, 2, \dots$, let 
  $\phi_{n} \colon S_{n} \to S$ and $\psi_{n} \colon T_{n} \to T$ denote
  the corresponding inclusion maps.  We prove first the following
  useful claims.

  \emph{Claim~1}: If $\alpha \colon T_{k} \to S_{n}$ is an $F$-algebra
  homomorphism such that $\mathcal{SV}(\phi_{n} \alpha ) = f^{-1}
  \mathcal{SV}(\psi_{k})$, then there exist an integer $j > k$ and an
  $F$-algebra homomorphism $\beta \colon S_{n}\to T_{j}$ such that
  $\psi_{j} \beta \alpha = \psi_{k}$ and $\mathcal{SV}(\psi_{j} \beta)
  = f \mathcal{SV}(\phi_{n})$.

  \emph{Proof of the claim}.  By Lemma~\ref{lem:59}\,(1), there exists an
  $F$-algebra homomorphism $\beta' \colon S_{n} \to T$ such that
  $\mathcal{SV}(\beta') = f \mathcal{SV}(\phi_{n})$.  Since~$S_{n}$ is a
  free $F$-semimodule of finite rank, we must have $\beta'(S_{n})
  \subseteq T_{i}$ for some~$i$.  Then~$\beta'$ defines an $F$-algebra
  homomorphism $\beta'' \colon S_{n} \to T_{i}$ such that $\psi_{i} \beta''
  = \beta '$, and we have $\mathcal{SV}(\psi_{i} \beta'') 
  = f \mathcal{SV}(\phi_{n})$.  This implies that
  \[ \mathcal{SV}(\psi_{i} \beta'' \alpha ) = f \mathcal{SV} (\phi_{n})
  \mathcal{SV}(\alpha ) = f\mathcal{SV}(\phi_{n}\alpha ) 
  = \mathcal{SV}(\psi_{k}) \,. \]
  Applying Lemma~5.9~(2), there exists an inner automorphism~$\theta$ of~$T$
  such that $\psi_{k} = \theta \psi_{i} \beta''\alpha$.  Since~$T_{i}$ is
  also a free $F$-semimodule of finite rank, there exists an integer 
  $j > k$ such that $\theta(T_{i}) \subseteq T_{j}$.  Then~$\theta$ defines
  an $F$-algebra homomorphism $\theta' \colon T_{i} \to T_{j}$ such
  that $\psi_{j} \theta' = \theta \psi_{i}$.  Set $\beta = \theta'
  \beta''$, so that~$\beta$ is an $F$-algebra homomorphism from~$S_{n}$
  into $S_{j}$ and $\psi_{j} \beta \alpha = \psi_{j} \theta' \beta'' \alpha
  =\theta \psi_{i} \beta'' \alpha = \psi_{k}$.  Using Lemma~5.9~(2), we see
  that \[ \mathcal{SV}(\psi_{j} \beta) = \mathcal{SV}(\psi \theta' \beta'')
  = \mathcal{SV}(\theta \psi_{i} \beta'') = \mathcal{SV}(\psi_{i} \beta'')
  = f\mathcal{SV}(\phi_{n}) \,. \]
  Thus, the claim is proved.

  Similarly, we get the following claim.

  \emph{Claim 2}: If $\alpha \colon S_{n}\to T_{k}$ is an $F$-algebra
  homomorphism such that $\mathcal{SV}(\psi_{k} \alpha) = 
  f \mathcal{SV}(\phi_{n})$, then there exist an integer $m > n$ and
  an $F$-algebra homomorphism $\beta \colon T_{k} \to S_{m}$ such that
  $\phi_{m} \beta \alpha = \phi_{n}$ and $\mathcal{SV}(\phi_{m} \beta )
  = f^{-1} \mathcal{SV}(\psi_{k})$.
  
  We next construct positive integers $n(1) < n(2) < \cdots$ and $F$-algebra
  homomorphisms $\beta_{k} \colon S_{n(k)} \to T$ such that:

  (a) For all $k = 1, 2, \dots$, we have $T_{k} \subseteq \beta_{k}(S_{n(k)})$
  and $\mathcal{SV}(\beta_{k}) = f\mathcal{SV} (\phi_{n(k)})$.

  (b) For all $k = 1, 2, \dots$, it is $\beta_{k}$ injective and
  $\beta_{k+1}$ an extension of $\beta_{k}$.

  By Lemma~\ref{lem:59}\,(1), there exists an $F$-algebra homomorphism
  $\alpha' \colon T_{1} \to S$ such that $\mathcal{SV}(\alpha') = f^{-1}
  \mathcal{SV}(\psi_{1})$.  Because~$T_{1}$ is free $F$-semimodule of finite
  rank, we have $\alpha'(T_{1}) \subseteq S_{n(1)}$ for some positive 
  integer~$n(1)$.  Then~$\alpha '$ defines an $F$-algebra homomorphism 
  $\alpha \colon T_{1} \to S_{n(1)}$ such that $\phi_{n(1)} \alpha = \alpha'$,
  and $\mathcal{SV}(\phi_{n(1)} \alpha ) = f^{-1}\mathcal{SV}(\psi_{1})$. 
  By Claim~1, there exists an integer $j > 1$ and an $F$-algebra
  homomorphism $\beta \colon S_{n(1)} \to S_{j}$ such that $\psi_{j} \beta
  \alpha =\psi_{1}$ and $\mathcal{SV}(\psi_{j} \beta) = f \mathcal{SV}
  (\phi_{n(1)})$.  Then $\beta_{1} := \psi_{j} \beta$ is an $F$-algebra
  homomorphism from~$S_{n(1)}$ to~$T$ such that $\beta_{1} \alpha
  = \psi_{1}$ and $\mathcal{SV}(\beta_{1}) = f\mathcal{SV}(\phi_{n(1)})$.
  Moreover, $T_{1} = \psi_{1}(T_{1}) = \beta_{1} \alpha (T_{1}) \subseteq 
  \beta_{1}(S_{n(1)})$, and we see that~(a) is satisfied for $k = 1$.

  Assume that we have $n(1), \dots, n(k)$ and $\beta_{1}, \dots, \beta_{k}$
  for some positive integer~$k$ such that (a) is satisfied up to~$k$ and
  (b) up to $k - 1$.  Since~$S_{n(k)}$ is a free $F$-semimodule of finite
  rank, there is an integer $i > k$ such that $\beta_{k}(S_{n(k)}) \subseteq
  T_{i}$.  Then~$\beta_{k}$ defines an $F$-algebra homomorphism $\beta '
  \colon S_{n(k)} \to T_{i}$ such that $\psi_{i} \beta' = \beta_{k}$, and
  we note that $\mathcal{SV}(\psi_{i} \beta') = f \mathcal{SV}
  (\phi_{n(k)})$.  By Claim~2, there exist a positive integer 
  $n(k+1) > n(k)$ and an $F$-algebra homomorphism $\delta \colon 
  T_{i} \to S_{n(k+1)}$ such that $\phi_{n(k+1)} \delta \beta'
  = \phi_{n(k)}$ and $\mathcal{SV}(\phi_{n(k+1)} \delta)
  = f^{-1} \mathcal{SV}(\psi_{i})$.  Since $\phi_{n(k+1)} \delta \beta'
  = \phi_{n(k)}$, we get that~$\beta'$ is injective; hence,
  $\beta_{k} = \psi_{i} \beta'$ is also injective.

  Applying Claim~1, there exists an integer $j > i$ and an $F$-algebra
  homomorphism $\gamma \colon S_{n(k+1)} \to S_{j}$ such that $\psi_{j}
  \gamma \delta = \psi_{i}$ and $\mathcal{SV}(\psi_{j} \gamma) = f 
  \mathcal{SV}(\phi_{n(k+1)})$.  Then $\beta_{k+1} := \psi_{j} \gamma$ is
  an $F$-algebra homomorphism from~$S_{n(k+1)}$ into~$T$ such that 
  $\mathcal{SV}(\beta_{k+1}) = f \mathcal{SV}(\phi_{n(k+1)})$. 
  From $\beta_{k+1} \delta = \psi_{j} \gamma \delta = \psi_{i}$ and 
  $i \ge k+1$, we get
  \[ T_{k+1} = \psi_{i}(T_{k+1}) = \beta_{k+1} \delta(T_{k+1}) 
  \subseteq \beta_{k+1} \delta(T_{i}) \subseteq \beta_{k+1} (S_{n(k+1)}) \,. \]
  Finally, since $\phi_{n(k+1)} \delta \beta' = \phi_{n(k)}$ and
  $\beta_{k+1} \delta \beta' = \psi_{i} \beta' = \beta_{k}$, we obtain
  that~$\beta_{k+1}$ is an extension of~$\beta_{k}$.  Therefore, (a) holds
  for $k+1$ and (b) for~$k$, so that the induction works.

  Since $n(1) < n(2) < \cdots$ and $n(k) \ge k$, we immediately get
  $\bigcup S_{n(k)} = S$.  As a result, the $\beta_{k}$ induce an
  injective $F$-algebra homomorphism $\beta \colon S \to T$ such that
  $\beta \phi_{n(k)} = \beta_{k}$.  Then $T_{k} \subseteq
  \beta_{k}(S_{n(k)}) = \beta(S_{n(k)}) \subseteq \beta(S)$ for all~$k$,
  hence, $T = \beta(S)$.  Thus~$\beta$ is surjective and,
  therefore, establishes an isomorphism $S \cong T$.
\end{proof}

Finally, we deduce that ultramatricial algebras over a semifield are
characterized also by their monoid of finitely generated projective
semimodules.

\begin{thm}\label{thm:511}
  Let~$S$ and~$T$ be ultramatricial algebras over a semifield~$F$.
  Then, there exists a monoid isomorphism $f \colon \mathcal{V}(S)
  \to \mathcal{V}(T)$ such that $f(\overline{S}) = \overline{T}$
  if and only if $S \cong T$ as $F$-algebras.
\end{thm}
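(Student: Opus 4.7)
The $(\Longleftarrow)$ direction is immediate from the functoriality of $\mathcal{V}$: any $F$-algebra isomorphism $S \cong T$ induces a monoid isomorphism $\mathcal{V}(S) \to \mathcal{V}(T)$ carrying $\overline{S}$ to $\overline{T}$. For the substantive direction, my plan is to reduce to Theorem~\ref{thm:510}, which has already classified ultramatricial $F$-algebras via $\mathcal{SV}$. Concretely, given an isomorphism $f \colon \mathcal{V}(S) \to \mathcal{V}(T)$ with $f(\overline{S}) = \overline{T}$, I would show that $f$ automatically restricts to an isomorphism $(\mathcal{SV}(S), \overline{S}) \to (\mathcal{SV}(T), \overline{T})$ in the category $\mathcal{C}$, and then invoke Theorem~\ref{thm:510}.

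The central step is an intrinsic characterization of the submonoid $\mathcal{SV}(S) \subseteq \mathcal{V}(S)$ that refers only to the monoid operation and the distinguished element $\overline{S}$. Such a characterization comes directly from the definition of strong projectivity: an element $\overline{P} \in \mathcal{V}(S)$ belongs to $\mathcal{SV}(S)$ if and only if there exist $\overline{Q} \in \mathcal{V}(S)$ and a positive integer $n$ such that $\overline{P} + \overline{Q} = n \overline{S}$. Indeed, this equation, unpacked via the definition of the monoid operation, is equivalent to $P \oplus Q \cong S^n$, which by definition says that $P$ is isomorphic to a direct summand of the finitely generated free $S$-semimodule $S^n$. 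Moreover, requiring only $\overline{Q} \in \mathcal{V}(S)$ rather than $\overline{Q} \in \mathcal{SV}(S)$ entails no loss, since any complement of $P$ in $S^n$ is automatically finitely generated strongly projective.

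Once the characterization is in place, the conclusion follows directly: if $\overline{P} + \overline{Q} = n \overline{S}$ in $\mathcal{V}(S)$, then applying the monoid isomorphism $f$ yields $f(\overline{P}) + f(\overline{Q}) = n \overline{T}$ in $\mathcal{V}(T)$, with $f(\overline{Q}) \in \mathcal{V}(T)$, whence $f(\overline{P}) \in \mathcal{SV}(T)$. The symmetric argument applied to $f^{-1}$ gives the reverse inclusion, so $f(\mathcal{SV}(S)) = \mathcal{SV}(T)$. Together with $f(\overline{S}) = \overline{T}$, this means $f$ restricts to the desired isomorphism in $\mathcal{C}$, and Theorem~\ref{thm:510} produces the $F$-algebra isomorphism $S \cong T$. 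I do not foresee a genuine obstacle here: all the content lies in recognizing that $\mathcal{SV}(S)$ is definable inside $(\mathcal{V}(S), +, \overline{S})$ by a purely monoid-theoretic condition, after which the hard work has already been carried out in Theorem~\ref{thm:510}.
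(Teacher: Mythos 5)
Your proposal is correct and follows essentially the same route as the paper: both proofs observe that $\overline{P}\in\mathcal{SV}(S)$ exactly when $\overline{P}+\overline{Q}=n\overline{S}$ for some $\overline{Q}\in\mathcal{V}(S)$ and $n\ge 1$, transport this relation through $f$ (and $f^{-1}$) to see that $f$ restricts to an isomorphism $(\mathcal{SV}(S),\overline{S})\cong(\mathcal{SV}(T),\overline{T})$ in $\mathcal{C}$, and then invoke Theorem~\ref{thm:510}. Your write-up merely makes explicit the intrinsic, purely monoid-theoretic definability of $\mathcal{SV}$ inside $(\mathcal{V},+,\overline{S})$ that the paper uses implicitly.
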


\begin{proof}
  Again, the direction ($\Longleftarrow$) is obvious, and we show the
  direction ($\Longrightarrow$).

  Let~$P$ be a finitely generated strongly projective right
  $S$-semimodule.  Then there is a positive integer~$n$ such that
  $S^{n} \cong P \oplus Q$ for some finitely generated projective
  right $S$-semimodule, \ie, we have $\overline{n S} = \overline{P}
  + \overline{Q}$ in $\mathcal{V}(S)$, whence
  \[ \overline{nT} = f(\overline{nS}) = f(\overline{P}) + f(\overline{Q}) \]
  in $\mathcal{V}(T)$.  This implies that $f(\overline{P}) \in
  \mathcal{VS}(T)$, so~$f$ induces an isomorphism from 
  $(\mathcal{SV}(S), \overline{S})$ onto $(\mathcal{SV}(T), 
  \overline{T})$ in the category~$\mathcal{C}$.  Now, applying
  Theorem~\ref{thm:510}, we immediately get that $S \cong T$ as
  $F$-algebras, as desired.
\end{proof}

\section{The $K_{0}$-group characterization 
  of ultramatricial algebras\\ over semifields}\label{sec:6}

The main goal of this section is to investigate Grothendieck's
$K_{0}$-groups on finitely generated projective semimodules, whose
study was initiated by Di Nola and Russo~\cite{dr:tstatmaas},
and to introduce and examine $K_{0}$-theory on finitely generated
strongly projective semimodules, as well as to establish ``semiring''
analogs of Elliott's celebrated classification theorem for
ultramatricial algebras over an arbitrary field \cite%
{elliott:otcoilososfa}.  Consequently, we classify zerosumfree
congruence-semisimple semirings in terms of $K_{0}$-theory.

We begin this section by recalling the $K_{0}$-group of a semiring
which was mentioned by Di Nola and Russo in \cite[Sec.~4]{dr:tstatmaas}.

\begin{defn}[{cf.~\cite[Sec.~4]{dr:tstatmaas}}]
  Let~$S$ be a semiring.  The \emph{Grothendieck group} $K_{0}(S)$ is
  the additive abelian group presented by the set of generators
  $\mathcal{V}(S)$ and the following set of relations:
  $\overline{P \oplus Q} = \overline{P} + \overline{Q}$ for all 
  $\overline{P}, \overline{Q} \in \mathcal{V}(S)$.
\end{defn}


\begin{rem}\label{rem:62}
  The Grothendieck group $K_{0}(S)$ can be described as follows.
  \begin{enumerate}[label=(\arabic*)]
  \item Let~$G$ be the free abelian group generated by $\overline{P}
    \in \mathcal{V}(S)$, and~$H$ the subgroup of~$G$ generated by
    \[ \overline{P \oplus Q} - \overline{P} - \overline{Q} \,, \]
    where $\overline{P}, \overline{Q}\in \mathcal{V}(S)$.
    Then $K_{0}(S) = G / H$, and we denote by $[P]$ the image of
    $\overline{P}$ in $K_{0}(S)$.
  \item A general element of $K_{0}(S)$ has the form \[ x \,=\, [P_{1}]
    + \ldots + [P_{m}] - [Q_{1}] - \ldots - [Q_{n}] \,=\, [P] - [Q] \,, \]
    with $P := P_{1} \oplus \cdots \oplus P_{m}$, $Q := Q_{1} \oplus
    \cdots \oplus Q_{n}$ and $\overline{P}, \overline{Q} \in \mathcal{V}(S)$.
  \item Defining on $X := \mathcal{V}(S) \!\times\! \mathcal{V}(S)$ an
    equivalence relation by
    \[ (\overline{P}, \overline{Q}) \sim (\overline{P'}, \overline{Q'})
    \quad \Longleftrightarrow \quad \exists \overline{T} \in \mathcal{V}(S)
    : \, P \oplus Q' \oplus T \cong P' \oplus Q \oplus T \,, \]
    it is a routine matter to check that $X / \!\!\sim\,\, :=
    \{ [ \overline{P}, \overline{Q} ] \mid \overline{P}, \overline{Q}
    \in \mathcal{V}(S) \}$ becomes an abelian group by defining
    $[ \overline{P}, \overline{Q} ] + [ \overline{T}, \overline{U} ] :=
    [ \overline{P} \oplus \overline{T} , \overline{Q} \oplus \overline{U} ]$
    for $\overline{P}, \overline{Q}, \overline{T}, \overline{U} \in
    \mathcal{V}(S)$, and that there is a group isomorphism given by
    \[ K_0(S) \to  X / \!\!\sim \,, \quad 
    [P] - [Q] \mapsto [ \overline{P}, \overline{Q} ] \,. \]
  \item For any finitely generated projective right $S$-semimodules~$P$
    and~$Q$ we have $[P] = [Q] \in K_{0}(S)$ if and only if $P \oplus T
    \cong Q \oplus T$ for some finitely generated projective right
    $S$-semimodule~$T$.

    This follows immediately from~(3); see also \cite[Prop.~I.6.1]%
    {lam:spopm} for a direct proof given in the case of rings, which
    serves in our semiring setting as well.
  \end{enumerate}
\end{rem}

Notice that if $\phi \colon R \to S$ is a semiring homomorphism, 
then~$\phi$ induces a well-defined group homomorphism $K_{0}(\phi)
\colon K_{0}(R) \to K_{0}(S)$ such that $K_{0}(\phi)([P]) = [\phi_{\#}(P)]
= [P \otimes_{R} S]$.  From this observation, we easily check that~$K_{0}$
gives a covariant functor from the category of semirings to the category
of abelian groups.  This fact was also mentioned by Di Nola and
Russo~\cite{dr:tstatmaas}.

Furthermore, we note the following useful fact.  The proof is quite
similar as it was done in the one of Proposition~\ref{prop:54} (also,
we can refer to \cite[Prop.~15.11, Prop.~15.13]{g:vnrr}); hence, we
will not reproduce it here.

\begin{prop}\label{prop:63}
  The functor $K_{0}(-) \colon \mathcal{SR} \to \mathcal{A}$ preserves
  direct limits and finite products, where $\mathcal{A}$ is the
  category of abelian groups.
\end{prop}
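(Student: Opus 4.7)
The plan is to adapt the argument of Proposition~\ref{prop:54} to the setting of Grothendieck groups, substituting the monoid $\mathcal{V}$ for $\mathcal{SV}$ and then passing to the associated abelian groups. The two main ingredients are Proposition~\ref{prop:410} (which produces, for every finitely generated projective semimodule over a direct limit~$S$, a ``source'' over some~$S_i$, and which also controls when extensions become isomorphic), together with the standard description in Remark~\ref{rem:62}\,(4) of equality in $K_{0}(S)$ modulo stabilization by some finitely generated projective semimodule.

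For direct limits, suppose $S=\varinjlim_{I} S_{i}$ with canonical maps $\phi_{i}\colon S_{i}\to S$. The homomorphisms $K_{0}(\phi_{i})\colon K_{0}(S_{i})\to K_{0}(S)$ are compatible with the $K_{0}(\phi_{ij})$, so they induce a canonical group homomorphism $g\colon \varinjlim_{I} K_{0}(S_{i})\to K_{0}(S)$. To prove surjectivity, take a general element $[P]-[Q]\in K_{0}(S)$; by Proposition~\ref{prop:410}\,(1), there exist an index~$i$ and finitely generated projective $S_{i}$-semimodules $P_{0},Q_{0}$ with $P_{0}\otimes_{S_{i}} S\cong P$ and $Q_{0}\otimes_{S_{i}} S\cong Q$, so $g$~hits the image of $[P_{0}]-[Q_{0}]$. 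For injectivity, suppose $g(x)=0$ for $x$ represented by $[P_{0}]-[Q_{0}]\in K_{0}(S_{i})$. Then $[P_{0}\otimes_{S_{i}}S]=[Q_{0}\otimes_{S_{i}}S]$ in $K_{0}(S)$, so by Remark~\ref{rem:62}\,(4) there is a finitely generated projective $T\in|\mathcal{M}_{S}|$ with
\[ (P_{0}\otimes_{S_{i}}S)\oplus T \,\cong\, (Q_{0}\otimes_{S_{i}}S)\oplus T . \]
Using Proposition~\ref{prop:410}\,(1) again we can find $k\ge i$ and a finitely generated projective $S_{k}$-semimodule $T_{0}$ with $T_{0}\otimes_{S_{k}} S\cong T$; pulling everything up to level~$k$ via $\phi_{ik}$ and applying Proposition~\ref{prop:410}\,(2) to the resulting isomorphism over~$S$, we obtain an index $t\ge k$ at which the corresponding semimodules become isomorphic after tensoring with~$S_{t}$. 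This forces the image of $[P_{0}]-[Q_{0}]$ in $K_{0}(S_{t})$, and hence $x$ itself in the direct limit, to vanish.

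For finite products, given $S\cong S_{1}\times\cdots\times S_{n}$ with projections $\pi_{j}\colon S\to S_{j}$, the maps $K_{0}(\pi_{j})$ assemble into a homomorphism $K_{0}(S)\to \prod_{j} K_{0}(S_{j})$. Because the central idempotents corresponding to the factors induce a natural decomposition $P\cong\bigoplus_{j}(P\otimes_{S}S_{j})$ for every finitely generated projective $P\in|\mathcal{M}_{S}|$, the reverse map $([P_{j}])_{j}\mapsto \sum_{j}[(\pi_{j})^{\#}(P_{j})]$ is a well-defined two-sided inverse; this step is entirely parallel to the product portion of Proposition~\ref{prop:54} and requires no additional work beyond observing that the Grothendieck group construction commutes with finite direct products of monoids.

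The only nontrivial point compared with Proposition~\ref{prop:54} is that $K_{0}$, unlike $\mathcal{SV}$, consists of \emph{formal} differences rather than of honest isomorphism classes, so surjectivity and injectivity must be handled at the level of pairs $[P]-[Q]$ rather than single classes, and injectivity requires the stabilizing semimodule~$T$ from Remark~\ref{rem:62}\,(4) to be lifted to some~$S_{k}$ via Proposition~\ref{prop:410}\,(1) before Proposition~\ref{prop:410}\,(2) can be applied. This stabilization-then-lifting step is the main obstacle, but it is a routine two-step application of Proposition~\ref{prop:410} and does not create any genuinely new difficulty.
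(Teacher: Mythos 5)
Your proof is correct and follows essentially the same route the paper intends: the paper omits the argument for Proposition~\ref{prop:63}, referring to the proof of Proposition~\ref{prop:54} and to \cite[Prop.~15.11, Prop.~15.13]{g:vnrr}, and your write-up is a faithful adaptation of that argument to $K_{0}$, correctly identifying the one extra step (lifting the stabilizing semimodule~$T$ from Remark~\ref{rem:62}\,(4) to some~$S_{k}$ before invoking Proposition~\ref{prop:410}\,(2)).
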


We next consider the $K_{0}$-group of finitely generated strongly
projective semimodules over a semiring. Similarly to the group
$K_{0}(S)$ of a semiring~$S$, we introduce the following notion.

\begin{defn}
  Let~$S$ be a semiring.  The \emph{Grothendieck group} $SK_{0}(S)$ is
  the additive abelian group presented by the set of generators
  $\mathcal{SV}(S)$ and the following set of relations:
  $\overline{P \oplus Q} = \overline{P} + \overline{Q}$ for all
  $\overline{P}, \overline{Q} \in \mathcal{SV}(S)$.
\end{defn}

The Grothendieck group $SK_0(S)$ can also be described as follows.
Let~$G'$ be the free abelian group generated by $\overline{P} \in
\mathcal{SV}(S)$, and $H'$ the subgroup of~$G'$ generated by
\[ \overline{P \oplus Q} - \overline{P} - \overline{Q} \,, \]
where $\overline{P}, \overline{Q} \in \mathcal{SV}(S)$.  Then 
$SK_0(S) = G' / H'$, and we denote by~$\widehat{P}$ the image 
of~$\overline{P}$ in~$SK_0(S)$.

Similarly to the case of the group $K_{0}(S)$, a general element of
$SK_{0}(S)$ may be written in the form
\[ x = \widehat{P} - \widehat{Q} \,, \]
where~$P$ and~$Q$ are finitely generated strongly projective right
$S$-semimodules.  We may also choose a finitely generated strongly
projective right $S$-semimodule~$Q'$ such that $Q \oplus Q'
\cong S^{n}$ for some~$n$, and rewrite
\[ x = \widehat{P \oplus Q'} - \widehat{Q \oplus Q'}
= \widehat{P'} - \widehat{S^{n}} \,, \]
where $P' = P \oplus Q'$.

Again, it is not hard to see that $SK_{0}(-)$ defines a covariant
functor from the category of semirings to the category of abelian
groups.  Furthermore, we have the following lemma, whose proof is done
similarly to the ones of Remark~\ref{rem:62}, and hence, we will not
reproduce it here.

\begin{lem}[{cf.~\cite[Prop.~I.6.1]{lam:spopm}}]\label{lem:65}
  Let~$P$ and~$Q$ be finitely generated strongly projective right
  $S$-semimodules.  Then, the following are equivalent:
  \begin{enumerate}[label=(\arabic*)]
  \item $\widehat{P} = \widehat{Q} \in SK_{0}(S)$;
  \item $P \oplus T \cong Q \oplus T$ for some finitely generated
    strongly projective right $S$-semimodule~$T$;
  \item there exists a positive integer~$n$ such that 
  $P \oplus S^{n} \cong Q \oplus S^{n}$.
  \end{enumerate}
\end{lem}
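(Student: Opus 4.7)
The implications follow the pattern familiar from the ring-theoretic Grothendieck group, adapted to the submonoid $\mathcal{SV}(S) \subseteq \mathcal{V}(S)$. I would first handle the easy directions. The implication $(3) \Longrightarrow (2)$ is immediate, since $S^{n}$ is itself finitely generated strongly projective, so we may take $T := S^{n}$. For $(2) \Longrightarrow (1)$, the isomorphism $P \oplus T \cong Q \oplus T$ yields $\widehat{P} + \widehat{T} = \widehat{Q} + \widehat{T}$ in $SK_{0}(S)$; cancelling $\widehat{T}$ in the abelian group $SK_{0}(S)$ gives $\widehat{P} = \widehat{Q}$.

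The substantive step is $(1) \Longrightarrow (2)$. Here I would mirror the construction sketched in Remark~\ref{rem:62}(3), now for the submonoid $\mathcal{SV}(S)$, which is closed under finite direct sums by Remark~\ref{rem:42}(3). Define on $Y := \mathcal{SV}(S) \times \mathcal{SV}(S)$ the relation
\[ (\overline{A}, \overline{B}) \sim (\overline{A'}, \overline{B'}) \ \Longleftrightarrow \ \exists \, \overline{T} \in \mathcal{SV}(S) : A \oplus B' \oplus T \cong A' \oplus B \oplus T \,, \]
which is straightforwardly an equivalence relation respecting coordinatewise direct sum. The quotient $Y /\! \sim$ carries a well-defined abelian group structure in which $[\overline{0}, \overline{0}]$ is the neutral element and $[\overline{B}, \overline{A}]$ is the inverse of $[\overline{A}, \overline{B}]$. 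The assignment $\widehat{P} - \widehat{Q} \mapsto [\overline{P}, \overline{Q}]$ gives a group isomorphism $SK_{0}(S) \cong Y / \!\sim$, verified by checking that it is well-defined on the defining relations of $SK_{0}(S)$ and exhibiting an obvious inverse. From $\widehat{P} = \widehat{Q}$ we then obtain $(\overline{P}, \overline{0}) \sim (\overline{Q}, \overline{0})$, which directly produces the required finitely generated strongly projective~$T$ with $P \oplus T \cong Q \oplus T$.

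Finally, for $(2) \Longrightarrow (3)$ I would use the defining property of strong projectivity: since~$T$ is a finitely generated strongly projective right $S$-semimodule, there exists a finitely generated strongly projective right $S$-semimodule~$T'$ and a positive integer~$n$ with $T \oplus T' \cong S^{n}$. Adding~$T'$ to both sides of $P \oplus T \cong Q \oplus T$ yields
\[ P \oplus S^{n} \,\cong\, P \oplus T \oplus T' \,\cong\, Q \oplus T \oplus T' \,\cong\, Q \oplus S^{n} \,, \]
as desired.

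I expect the only point requiring genuine care to be the Grothendieck-group description $SK_{0}(S) \cong Y / \!\sim$ used in $(1) \Longrightarrow (2)$: one must verify that $\sim$ is well-defined (transitivity needs the monoid property of $\mathcal{SV}(S)$, including that a sum of strongly projective semimodules is strongly projective, which is Remark~\ref{rem:42}(3)) and that the map $\widehat{P} - \widehat{Q} \mapsto [\overline{P}, \overline{Q}]$ is both well-defined and an isomorphism. This is, however, exactly the standard construction of the Grothendieck group of a commutative monoid, so no new semiring-theoretic input beyond what is already in place is required, and, as the statement itself indicates, the argument is entirely parallel to the ring case from~\cite[Prop.~I.6.1]{lam:spopm}.
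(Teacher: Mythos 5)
Your proof is correct and is essentially the argument the paper intends: the paper omits the proof, referring to the same standard Grothendieck-group description (Remark~6.2\,(3)--(4)) adapted to $\mathcal{SV}(S)$, and your handling of $(2)\Rightarrow(3)$ via a complement $T\oplus T'\cong S^{n}$ is exactly the observation the paper records just before the lemma. No discrepancies to report.
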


\begin{rem}\label{rem:66}
  Let us note the following simple facts.
  \begin{enumerate}[label=(\arabic*)]
  \item For any semiring~$S$, there is always the canonical group
    homomorphism $\jmath \colon SK_{0}(S) \to K_{0}(S)$, defined by
    $\jmath(\widehat{P}) = [P]$.
  \item Let~$S$ be a semiring all of whose finitely generated
    projective right modules are free.  Then the groups $SK_{0}(S)$ and
    $K_{0}(S)$ are cyclic groups generated by $\widehat{S}$ and $[S]$,
    respectively.  And, if in addition~$S$ has the IBN property, then
    those groups are exactly $\mathbb{Z}$.
  \item The group $SK_{0}(\mathbb{B})$ is isomorphic to the free
    abelian group $\mathbb{Z}$, but $K_{0}(\mathbb{B})$ contains as a
    subgroup a free abelian group with countably infinite basis.
    Indeed, the first fact follows from Example~\ref{exa:53}\,(1), and
    the latter follows from the following proposition.
  \end{enumerate}
\end{rem}

\begin{prop}\label{prop:67}
  For an additively idempotent commutative semiring~$S$, the group
  $K_{0}(S)$ contains a free abelian group with countable basis as a
  subgroup.
\end{prop}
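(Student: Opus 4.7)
The plan is to show that $K_{0}(\mathbb{B})$ embeds as a direct summand of $K_{0}(S)$ and that $K_{0}(\mathbb{B})$ itself already contains a free abelian group of countably infinite rank.  The crucial step is producing a retraction $\mathbb{B} \xrightarrow{\iota} S \xrightarrow{\pi} \mathbb{B}$ of semiring homomorphisms; applying the functor~$K_{0}$ then splits off a copy of $K_{0}(\mathbb{B})$ inside $K_{0}(S)$.

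For the inclusion, the map $\iota \colon \mathbb{B} \to S$ sending $0 \mapsto 0_{S}$ and $1 \mapsto 1_{S}$ is a semiring homomorphism precisely because~$S$ is additively idempotent (so $1_{S} + 1_{S} = 1_{S}$).  For the surjection, Zorn's lemma produces a maximal proper congruence~$\rho$ on~$S$, and the quotient $S/\rho$ is then congruence-simple and commutative.  By the classification of congruence-simple commutative semirings \cite[Thm.~10.1]{bshhurtjankepka:scs}---already invoked in the proof of Corollary~\ref{cor:52}---$S/\rho$ is either a field or the Boolean semifield~$\mathbb{B}$.  The ``field'' alternative is impossible, since $S/\rho$ inherits additive idempotency from~$S$ and $1+1=1$ in a field forces $1=0$.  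Hence $S/\rho \cong \mathbb{B}$, yielding $\pi \colon S \twoheadrightarrow \mathbb{B}$.  The composite $\pi \circ \iota$ is a semiring endomorphism of~$\mathbb{B}$ sending $0 \mapsto 0$ and $1 \mapsto 1$, hence equals $\on{id}_{\mathbb{B}}$; by functoriality, $K_{0}(\pi) \circ K_{0}(\iota) = \on{id}_{K_{0}(\mathbb{B})}$, so $K_{0}(\iota) \colon K_{0}(\mathbb{B}) \hookrightarrow K_{0}(S)$ is a split injection.

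It remains to identify $K_{0}(\mathbb{B})$ as a free abelian group of countable rank.  By Fact~\ref{fac:46}, finitely generated projective $\mathbb{B}$-semimodules are exactly finite distributive lattices; and by Birkhoff's representation theorem, the isomorphism classes of finite distributive lattices correspond bijectively (via $L \mapsto J(L)$, the poset of join-irreducibles) to isomorphism classes of finite posets, with direct sum of $\mathbb{B}$-semimodules (\ie, product of lattices) going to disjoint union of posets.  Hence $\mathcal{V}(\mathbb{B})$ is the free commutative monoid on the countably infinite set of isomorphism classes of nonempty finite connected posets; in particular it is cancellative, so it embeds into its Grothendieck group $K_{0}(\mathbb{B})$, which is consequently free abelian of countable rank.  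Combining this with the preceding paragraph completes the proof.

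The main obstacle is producing the surjection $\pi \colon S \twoheadrightarrow \mathbb{B}$; everything else is either functoriality or a standard Birkhoff-type bookkeeping.  This is precisely the step where additive idempotency of~$S$ is used essentially, since it is exactly what rules out the ``field'' alternative in the classification of congruence-simple commutative semirings.
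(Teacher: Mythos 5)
Your retraction step---producing $\pi \colon S \twoheadrightarrow \mathbb{B}$ from a maximal congruence via \cite[Thm.~10.1]{bshhurtjankepka:scs} and then splitting off $K_{0}(\mathbb{B})$ inside $K_{0}(S)$ by functoriality from $\pi \circ \imath = \on{id}_{\mathbb{B}}$---is exactly what the paper does (it reuses the same device already employed in Corollary~\ref{cor:52}), and your explicit remark that additive idempotency rules out the ``field'' alternative only makes the paper's tacit step visible. Where you genuinely diverge is in exhibiting the free abelian subgroup of $K_{0}(\mathbb{B})$. The paper takes, for each prime~$p$, the chain $Q_{p} = \{0, 1, \dots, p-1\}$ under $\max$ (projective by Fact~\ref{fac:46}) and shows the classes $[Q_{p}]$ are $\mathbb{Z}$-independent by a cardinality count: a relation in $K_{0}(\mathbb{B})$ yields an isomorphism of \emph{finite} semimodules, cardinalities multiply across direct sums, and unique prime factorization forces all exponents to vanish. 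You instead determine $\mathcal{V}(\mathbb{B})$ completely: by Fact~\ref{fac:46} the finitely generated projectives are precisely the finite distributive lattices, and Birkhoff duality (join-irreducibles turning finite products of lattices into disjoint unions of posets) identifies $\mathcal{V}(\mathbb{B})$ with the free commutative monoid on isomorphism classes of nonempty connected finite posets, whence $K_{0}(\mathbb{B})$ is itself free abelian of countably infinite rank. Your route proves strictly more---a full description of $\mathcal{V}(\mathbb{B})$ and $K_{0}(\mathbb{B})$, including cancellativity of $\mathcal{V}(\mathbb{B})$, which sharpens the paper's Remark~\ref{rem:66}\,(3)---at the cost of importing Birkhoff's representation theorem and the (correct, but worth stating) facts that finite direct sums of semimodules are products of the underlying monoids and that a join-semilattice isomorphism of finite lattices is a lattice isomorphism; the paper's argument is more elementary and self-contained but only produces a countable independent family rather than a basis. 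Both arguments are correct.
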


\begin{proof}
  We first prove the statement for the case when $S = \mathbb{B}$.
  For any prime number~$p$, consider the subset $Q_{p} := \{ 0, 1,
  \dots, p\!-\!1 \}$ of $\mathbb{Z}^{+}$.  From Fact~\ref{fac:46} we see that
  the monoid $(Q_{p}, \max)$ is a projective $\mathbb{B}$-semimodule.
  We denote by~$G$ the subgroup of $K_{0}(S)$ generated by all
  elements~$[Q_{p}]$ and  show that the countably infinite set
  $\{ [Q_{p}] \mid p \text{ is prime} \}$ is a basis of~$G$.

  Indeed, assume that $\sum_{i=1}^{r} n_{i} [Q_{p_{i}}] = 0$ in
  $K_{0}(\mathbb{B})$, where the $n_{i}$ are integers and the $p_{i}$
  are pairwise distinct prime numbers.  We may assume that
  $n_{1}, \dots, n_{k} \ge 0$ and $n_{k+1}, \dots, n_{r} \le 0$, so
  that, writing $m_{j} = -n_{j}$ for $k \!+\! 1 \le j \le r$, we have
  \[ \smsum_{i=1}^{k} n_{i} [Q_{p_{i}}] = \smsum_{j=k+1}^{r} m_{j} [Q_{p_{j}}] \,, \]
  and hence, $\bigoplus_{i=1}^{k} Q_{p_{i}}^{n_{i}} \oplus T
  = \bigoplus_{j=k+1}^{r} Q_{p_{j}}^{m_{j}} \oplus T$ for some finitely
  generated projective $\mathbb{B}$-semimodule~$T$.

  Since every finitely generated projective $\mathbb{B}$-semimodule is
  finite, we get that $|\bigoplus_{i=1}^{k} Q_{p_{i}}^{n_{i}} \oplus T|
  = |\bigoplus_{j=k+1}^{r} Q_{p_{j}}^{m_{j}} \oplus T|$, and hence, 
  $|\bigoplus_{i=1}^{k} Q_{p_{i}}^{n_{i}}| = |\bigoplus_{j=k+1}^{r} 
  Q_{p_{j}}^{m_{j}}|$.  This implies that $p_{1}^{n_{1}} \ldots p_{k}^{n_{k}}
  = p_{k+1}^{m_{k+1}} \ldots p_{r}^{m_{r}}$, and hence, we must have 
  $n_{i} = 0$ for all $i = 1, \dots, r$.  Therefore, $K_{0}(\mathbb{B})$
  contains the subgroup~$G$ which is isomorphic to the free abelian group
  with countable basis $\{ p \in \mathbb{Z}^{+} \mid p \text{ is prime} \}$.
  
  Consider now the case when~$S$ is an arbitrary additively idempotent
  commutative semiring.  One may readily find a maximal congruence~$\rho$
  on~$S$ by using Zorn's lemma.  We then have that the additively
  idempotent commutative semiring $T : = S / \rho$ has only the
  trivial congruences.  By \cite[Thm.~10.1]{bshhurtjankepka:scs}, $T$~is
  the Boolean semifield $\mathbb{B}$.  Let $\imath \colon \mathbb{B} \to S$
  and $\pi \colon S \to \mathbb{B}$ be the canonical injection and
  surjection, respectively.  Since $\pi \circ \imath = \on{id}_{\mathbb{B}}$,
  we must have that $K_{0}(\pi) K_{0}(\imath) = \on{id}_{K_{0}(\mathbb{B})}$, by
  the functorial property of $K_{0}$.  This implies that $K_{0}(\imath)
  \colon K_{0}(\mathbb{B}) \to K_{0}(S)$ is an injective group homomorphism,
  and hence, we may consider $K_{0}(\mathbb{B})$ as a subgroup 
  of~$K_{0}(S)$ and finish the proof.
\end{proof}

The homomorphism $\jmath $ of Remark~\ref{rem:66}\,(1) is, in general,
not an isomorphism in semiring setting.

\begin{lem}\label{lem:68}
  The canonical homomorphism $\jmath \colon SK_{0}(\mathbb{B}) \to
  K_{0}(\mathbb{B})$ is injective but not surjective.
\end{lem}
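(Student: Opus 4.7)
The plan is to establish injectivity by a cardinality computation, and non-surjectivity by exhibiting a concrete finitely generated projective $\mathbb{B}$-semimodule whose class in $K_0(\mathbb{B})$ is provably not in the image of $\jmath$.

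For injectivity, recall that by Example~\ref{exa:53}\,(1), $\mathcal{SV}(\mathbb{B}) \cong \mathbb{Z}^{+}$, so $SK_{0}(\mathbb{B}) \cong \mathbb{Z}$ with generator $\widehat{\mathbb{B}}$, and the image of~$\jmath$ is the cyclic subgroup $\mathbb{Z}\,[\mathbb{B}] \subseteq K_{0}(\mathbb{B})$. Hence it suffices to check that $n\,[\mathbb{B}] \ne 0$ in $K_{0}(\mathbb{B})$ whenever $n \ne 0$. By Remark~\ref{rem:62}\,(4), the relation $n\,[\mathbb{B}] = 0$ (say $n > 0$) means $\mathbb{B}^{n} \oplus T \cong T$ for some finitely generated projective right $\mathbb{B}$-semimodule~$T$. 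Since any such~$T$ is a retract of some $\mathbb{B}^{k}$ and $|\mathbb{B}^{k}| = 2^{k}$, every finitely generated projective $\mathbb{B}$-semimodule is finite; comparing cardinalities gives $2^{n}\,|T| = |T|$, forcing $n = 0$.

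For non-surjectivity, I will use the finitely generated projective $\mathbb{B}$-semimodule $P_{\mathbb{B}} = \{(0,0),(0,1),(1,1)\} \subseteq \mathbb{B}^{2}$ of Example~\ref{exa:47} (its projectivity being guaranteed by Fact~\ref{fac:46}), and claim that $[P_{\mathbb{B}}] \in K_{0}(\mathbb{B})$ does not belong to $\mathbb{Z}\,[\mathbb{B}]$. Suppose for contradiction that $[P_{\mathbb{B}}] = k\,[\mathbb{B}]$ for some $k \in \mathbb{Z}$. If $k \ge 0$, Remark~\ref{rem:62}\,(4) yields an isomorphism $P_{\mathbb{B}} \oplus T \cong \mathbb{B}^{k} \oplus T$ for a finitely generated projective $T$, and counting elements gives $3\,|T| = 2^{k}\,|T|$, which is impossible. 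If $k < 0$, the same argument applied to $[P_{\mathbb{B}}] + (-k)[\mathbb{B}] = 0$ gives $P_{\mathbb{B}} \oplus \mathbb{B}^{-k} \oplus T \cong T$, so $3 \cdot 2^{-k}\,|T| = |T|$, again impossible.

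The substantive ingredients are therefore the finiteness of finitely generated projective $\mathbb{B}$-semimodules (immediate from being retracts of finite free semimodules) and the equality criterion of Remark~\ref{rem:62}\,(4); the only mild subtlety is splitting the non-surjectivity argument into the two sign cases for~$k$, which is a routine bookkeeping issue rather than a real obstacle.
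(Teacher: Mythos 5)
Your proof is correct and follows essentially the same route as the paper's: both parts rest on the finiteness of finitely generated projective $\mathbb{B}$-semimodules together with the cancellation criterion of Remark~\ref{rem:62}\,(4) (resp.\ Lemma~\ref{lem:65}), and the non-surjectivity argument uses the very same semimodule $P_{\mathbb{B}}$ from Example~\ref{exa:47} with the same cardinality count $3 = 2^{m-n}$. The only cosmetic difference is that you phrase everything through the generator $\widehat{\mathbb{B}}$ of $SK_0(\mathbb{B}) \cong \mathbb{Z}$, while the paper works with a general element $\widehat{P}-\widehat{Q}$ and invokes Theorem~\ref{thm:45} explicitly (and also notes an alternative non-surjectivity argument via Proposition~\ref{prop:67}).
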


\begin{proof} 
  Assume that $x = \widehat{P} - \widehat{Q} \in SK_{0}(\mathbb{B})$
  such that $\jmath(x) = 0$.  We then have that $[P] = [Q] \in
  K_{0}(\mathbb{B})$, so $P \oplus T \cong Q \oplus T$ for some
  finitely generated projective $\mathbb{B}$-semimodule~$T$, by 
  Remark~\ref{rem:62}\,(4).  For~$P$ and~$Q$ are finitely generated 
  strongly projective $\mathbb{B}$-semimodule and Theorem~\ref{thm:45},
  there exist nonnegative integers~$m$ and~$n$ such that $P \cong
  \mathbb{B}^{m}$ and $Q \cong \mathbb{B}^{n}$.  Furthermore,
  since~$T$ is a finitely generated $\mathbb{B}$-semimodule, we
  immediately get that~$T$ is a finite set.  Then, from the equality
  $P \oplus T \cong Q \oplus T$, we must have that $|\mathbb{B}^{m}
  \oplus T| = |\mathbb{B}^{n} \oplus T|$; hence, $m = n$.  This
  implies that $P \cong Q$, that means, $x = 0 \in
  SK_{0}(\mathbb{B})$.  Therefore, $\jmath$ is injective.

  From Proposition~\ref{prop:67} and since $SK_0(\mathbb{B}) \cong
  \mathbb{Z}$ by Remark~\ref{rem:66}\,(2), we see that~$\jmath$ is not
  surjective.  For the reader's convenience, we also give a direct
  argument.

  Assume that~$\jmath$ is surjective, and consider the projective
  $\mathbb{B}$-semimodule $P = \{ (0,0), (0,1), (1,1) \}$ as in 
  Example~\ref{exa:47}.  We then have that there exists an element
  $x = \widehat{A} - \widehat{B} \in SK_{0}(\mathbb{B})$ such that
  $\jmath(x) = [P]$, that means,
  \[ A \oplus C \cong B \oplus P \oplus C \]
  for some finitely generated projective $\mathbb{B}$-semimodule~$C$.
  Since~$A$, $B$ and~$C$ are finite, we get that $|A| = |B \oplus P|$.
  By Theorem~\ref{thm:45}, we write~$A$ and~$B$ of the form $A \cong
  \mathbb{B}^{m}$ and $B \cong \mathbb{B}^{n}$ for some
  nonnegative integers~$m$ and~$n$.  From these observations, we must
  have that $|P| = 2^{m-n}$, a contradiction.  Thus~$\jmath$ is not
  surjective, as claimed.
\end{proof}

In the next theorem we provide a criterion for checking the isomorphism
property of the homomorphism~$\jmath$ in a division semiring setting.
Before doing this, we need some useful notions and facts.  Following
\cite{agg:liotsab}, a family~$X$ of elements in a semimodule~$M$ over
a semiring~$S$ is \emph{weakly linearly independent} if there is no
element in~$X$ that can be expressed as a linear combination of other
elements of~$X$.  We define the \emph{weak dimension} of~$M$, denoted
by $\dim_{w}(M)$, as the minimum cardinality of a weakly linearly
independent generating family of~$M$.  It is not hard to see that the
weak dimension of a semimodule~$M$ is equal to the minimum cardinality
of a minimal generating family, or the minimum cardinality of any
generating family of~$M$.  Recall that a semiring~$S$ is \emph{entire}
if $a b = 0$ implies that $a = 0$ or $b = 0$ for any $a, b \in S$.

\begin{lem}\label{lem:69}
  Let~$S$ be a zerosumfree entire semiring, and~$P, Q$ finitely
  generated projective right $S$-semimodules.  Then $\dim_{w}(P \oplus Q)
  = \dim_{w}(P) + \dim_{w}(Q)$.
\end{lem}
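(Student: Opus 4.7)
My plan is to prove the two inequalities separately. The direction $\dim_{w}(P\oplus Q)\le \dim_{w}(P)+\dim_{w}(Q)$ is immediate: given minimum-size generating families $\{p_1,\dots,p_m\}$ of $P$ and $\{q_1,\dots,q_n\}$ of $Q$, the family $\{(p_i,0)\}_{i=1}^{m}\cup\{(0,q_j)\}_{j=1}^{n}$ obviously generates $P\oplus Q$.

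For the reverse inequality I would first record two structural properties enjoyed by every finitely generated projective right $S$-semimodule $M$ under the hypotheses on $S$. Namely, since $M$ is a retract of some free semimodule $S^{n}$, the retraction admits a splitting monomorphism, so $M$ embeds as a subsemimodule of~$S^{n}$. From the zerosumfreeness of $S^{n}$ one obtains that $M$ is zerosumfree, and from the entireness of $S$ applied to any nonzero coordinate of an element of $M$ one obtains that $ms=0$ (with $m\in M$, $s\in S$) forces $m=0$ or $s=0$. Both facts will be applied below to $P$ and to~$Q$.

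Now let $\{x_1,\dots,x_k\}$ be a generating family of $P\oplus Q$ realizing $k=\dim_{w}(P\oplus Q)$; minimality allows us to assume $x_i\ne 0$ for each $i$. Writing $x_i=(p_i,q_i)$, set $J_P:=\{i:q_i=0\}$ and $J_Q:=\{i:p_i=0\}$, which are disjoint subsets of $\{1,\dots,k\}$. I claim that $\{p_i\mid i\in J_P\}$ generates $P$: given $p\in P$, choose scalars $s_i\in S$ with $(p,0)=\sum_i x_i s_i$; then $\sum_i q_i s_i = 0$ in $Q$, so by zerosumfreeness of $Q$ we have $q_i s_i=0$ for every $i$, and by the second property each such $i$ satisfies $q_i=0$ (i.e.\ $i\in J_P$) or $s_i=0$. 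Consequently $p=\sum_{i\in J_P}p_i s_i$. Symmetrically $\{q_j\mid j\in J_Q\}$ generates $Q$, yielding $|J_P|\ge\dim_{w}(P)$ and $|J_Q|\ge\dim_{w}(Q)$. Disjointness then gives
\[ k \;\ge\; |J_P|+|J_Q| \;\ge\; \dim_{w}(P)+\dim_{w}(Q), \]
which, combined with the opposite inequality, finishes the proof.

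The only genuinely delicate point is justifying the second structural property for projective (not merely strongly projective) semimodules; I handle it through the splitting monomorphism attached to any retract, so that $P$ and $Q$ embed into free semimodules even though they are not assumed to be direct summands of free semimodules. Once this is in place, everything else reduces to the clean combinatorial decomposition of a minimum generating set of $P\oplus Q$ into the disjoint index sets $J_P$ and $J_Q$.
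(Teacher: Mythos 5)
Your proof is correct and follows essentially the same route as the paper's: both establish that finitely generated projective semimodules over a zerosumfree entire semiring are zerosumfree and satisfy $ms=0\Rightarrow m=0$ or $s=0$ (the paper merely asserts this where you justify it via the splitting monomorphism), and both then apply these facts coordinatewise to a minimum generating family of $P\oplus Q$ to kill the unwanted coefficients. The only difference is a minor streamlining at the end: the paper additionally argues that a minimal generating family contains no ``mixed'' generators $(x_j,y_j)$ with both components nonzero (its step $n=d$), whereas you bypass this by observing that the disjoint index sets $J_P$ and $J_Q$ already give $k\ge |J_P|+|J_Q|\ge\dim_w(P)+\dim_w(Q)$.
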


\begin{proof}
  Notice that $\dim_{w}(A \oplus B) \le \dim_{w}(A) + \dim_{w}(B)$ holds
  for any finitely generated right $S$-semimodules~$A$ and~$B$, so it
  suffices to prove the converse inequality for finitely generated
  projective right $S$-semimodules, which we may assume to be nonzero.
  It is easy to see that every projective right $S$-semimodule~$T$ is
  zerosumfree, and satisfies that $x s = 0$ implies $x = 0$ or
  $s = 0$, for $x \in T$, $s \in S$.

  Now let $d := \dim_{w}(P \oplus Q)$ and let $X = \{( x_{1} ,y_{1}),
  \dots, (x_{d}, y_{d}) \}$ be a minimal generating family of $P \oplus Q$
  for some $x_{i} \in P$, $y_{i} \in Q$.  We may assume that
  \[ X = \big\{ (x_1, 0), \dots, (x_k, 0), (0, y_{k+1}),
  \dots, (0, y_n), (x_{n+1}, y_{n+1}), \dots, (x_d, y_d) \big\} \]
  with all $x_i, y_j \ne 0$, for some $0 \le k \le n \le d$.

  For each $n \!+\!1 \le j \le d$, we have $(x_{j}, 0) \in P \oplus Q$
  and hence we can write $(x_{j}, 0) = \sum_{i=1}^d (x_{i}, y_{i}) s_{i}$
  for some $s_{i} \in S$,  so that \[ x_j = \smsum_{i=1}^{d} x_{i} s_{i} \in P
  \quad \text{ and } \quad 0 = \smsum_{i=1}^d y_{i} s_{i} \in Q \,. \]
  As~$Q$ is zerosumfree we have $y_i s_i = 0$ for all~$i$, and since
  $y_{k+1}, \dots, y_{d} \ne 0$ we get that $s_{k+1} = \ldots = s_{d} = 0$;
  thus we obtain $(x_{j}, 0) = \sum_{i=1}^{k} (x_{i}, 0) s_{i}$. 
  Similarly, we have that $(0, y_{j}) = \sum_{j=k+1}^{n} (0, y_{j}) r_{j}$
  for some $r_{j} \in S$, whence \[ (x_{j}, y_{j}) = \smsum_{i=1}^{k}(x_{i}, 0)
  s_{i} + \smsum_{j=k+1}^{n} (0, y_{j}) r_{j} \,. \]
  From this observation and the minimality of~$X$, we infer that
  $n = d$.

  It is then easy to see that $\{ x_{1}, \dots, x_{k} \}$ and $\{ y_{k+1},
  \dots, y_{d} \}$ are generating families of~$P$ and~$Q$, respectively.
  This implies that $k \ge \dim_{w}(P)$ and $d - k \ge \dim_{w}(Q)$,
  whence $\dim_{w}(P \oplus Q) = d \ge \dim_{w}(P) + \dim_{w}(Q)$,
  as desired.
\end{proof}

\begin{thm}\label{thm:610}
  For an arbitrary division semiring~$D$ the following is true:
  \begin{enumerate}[label=(\arabic*)]
  \item The canonical homomorphism $\jmath \colon SK_{0}(D) \to K_{0}(D)$
    is injective.
  \item The homomorphism $\jmath \colon SK_{0}(D) \to K_{0}(D)$ is an
    isomorphism if and only if~$D$ is a weakly cancellative division
    semiring.
  \end{enumerate}
\end{thm}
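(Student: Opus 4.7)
For part~(1), my plan is to reduce the injectivity claim to the weak-dimension invariant of Lemma~\ref{lem:69}.  I will take $x = \widehat{P} - \widehat{Q} \in \ker(\jmath)$, so that $[P] = [Q]$ in $K_{0}(D)$, and invoke Remark~\ref{rem:62}\,(4) to produce a finitely generated projective $T$ with $P \oplus T \cong Q \oplus T$.  Since $P$ and $Q$ are finitely generated strongly projective, Theorem~\ref{thm:45} gives $P \cong D^{m}$ and $Q \cong D^{n}$.  In the zerosumfree case I will apply Lemma~\ref{lem:69} to both sides, obtaining $m + \dim_{w}(T) = n + \dim_{w}(T)$ and hence $m = n$; in the division-ring case the classical IBN property gives this immediately.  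Either way $\widehat{P} = \widehat{Q}$ and $x = 0$.

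Direction~($\Leftarrow$) of part~(2) will follow immediately from Example~\ref{exa:53}\,(2): if $D$ is weakly cancellative then every finitely generated projective $D$-semimodule is free, so $\mathcal{V}(D) = \mathcal{SV}(D)$, the two Grothendieck groups literally coincide, and $\jmath$ is the identity.

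For direction~($\Rightarrow$) I plan to proceed by contrapositive.  Suppose $D$ is not weakly cancellative; a division ring is always weakly cancellative, so $D$ must be a zerosumfree division semiring.  Here the key structural input is the dichotomy that a zerosumfree division semiring is either additively idempotent or additively cancellative (with the latter implying weak cancellability), which forces $D$ to be additively idempotent.  Then the map $\iota \colon \mathbb{B} \to D$ sending $0 \mapsto 0$ and $1 \mapsto 1$ is a well-defined semiring homomorphism (the idempotence $1 + 1 = 1$ in $D$ ensures $\iota(1+1) = \iota(1) = \iota(1) + \iota(1)$), split by the canonical surjection $\pi \colon D \to \mathbb{B}$ sending each nonzero element to $1$.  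Functoriality of $K_{0}$ then produces an injection $K_{0}(\iota) \colon K_{0}(\mathbb{B}) \hookrightarrow K_{0}(D)$, and Proposition~\ref{prop:67} shows that $K_{0}(\mathbb{B})$, hence also $K_{0}(D)$, contains a free abelian group of countable rank, whereas $SK_{0}(D) \cong \mathbb{Z}$ by Theorem~\ref{thm:45}.  This contradicts the assumption that $\jmath$ is an isomorphism.

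The main obstacle I foresee is the invocation of the structural dichotomy ``zerosumfree division semiring $\Rightarrow$ additively idempotent or additively cancellative''.  For commutative semifields this is classical (via embeddings into $\mathbb{R}_{\ge 0}$), but in the non-commutative case one would need either a citation to division-semiring structure theory or a direct verification, which is the most delicate point of the argument.  Should the dichotomy not be available in the required generality, the fallback plan would be to construct, for any non-weakly-cancellative zerosumfree $D$, a specific finitely generated projective $P$ whose class $[P]$ in $K_{0}(D)$ is not an integer multiple of $[D]$, generalising the cardinality argument of Lemma~\ref{lem:68}; since Example~\ref{exa:47} shows that weak dimension alone cannot distinguish a free semimodule from a non-free projective of the same generator count, this direct approach would require a finer invariant than $\dim_{w}$.
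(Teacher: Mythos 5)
Your part~(1) is correct but follows a genuinely different route from the paper's. The paper reduces to the Boolean case: it writes an element of $SK_{0}(D)$ as $n\widehat{D}$, pushes forward along $\pi\colon D\to\mathbb{B}$, and invokes the injectivity of $\jmath\colon SK_{0}(\mathbb{B})\to K_{0}(\mathbb{B})$ (Lemma~\ref{lem:68}, proved by a cardinality count on finite $\mathbb{B}$-semimodules) to force $n=0$. You instead apply Lemma~\ref{lem:69} directly over~$D$: from $[P]=[Q]$ you get $P\oplus T\cong Q\oplus T$ with~$T$ finitely generated projective, and additivity of $\dim_{w}$ over the zerosumfree entire semiring~$D$ cancels $\dim_{w}(T)$ and yields $m=n$. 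This is arguably cleaner and avoids Lemma~\ref{lem:68} altogether; the only point you leave implicit is that $\dim_{w}(D^{k})=k$, which itself follows from Lemma~\ref{lem:69} together with $\dim_{w}(D)=1$, so there is no real gap here. The backward direction of~(2) is handled exactly as in the paper.

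The forward direction of~(2), however, has a genuine gap, and it is the one you yourself flag. Your argument needs every non-weakly-cancellative zerosumfree division semiring to be additively idempotent, which you derive from the dichotomy ``additively idempotent or additively cancellative.'' That dichotomy is neither proved nor cited to a specific source, and the paper deliberately does \emph{not} assume it: after extracting $1+1=1+s$ with $s\ne 1$, the paper splits into $s=0$ (where $1+1=1$ and your $\mathbb{B}$-splitting argument via Proposition~\ref{prop:67} is essentially what the paper does, except the paper contradicts surjectivity of $\jmath_{\mathbb{B}}$ via Lemma~\ref{lem:68} rather than comparing ranks) and $s\ne 0$, where no additive idempotency is available. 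In that second case the paper constructs the explicit projective $P=\{(d,d)a+(ds,d)b\}\subseteq D^{2}$ from the idempotent matrix $\big(\begin{smallmatrix} d & ds \\ d & d \end{smallmatrix}\big)$ with $d=(1+1)^{-1}$, uses $K_{0}(\pi)$ and a cardinality count over~$\mathbb{B}$ to pin down $[P]=\jmath(1\cdot\widehat{D})$, hence $D\oplus Q\cong P\oplus Q$, and then applies Lemma~\ref{lem:69} to get $\dim_{w}(P)=1$, so~$P$ is cyclic and hence free over the division semiring~$D$ --- contradicting \cite[Prop.~3.1]{ik:ospopsops}, which says this~$P$ is not free. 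Note that your worry that ``weak dimension alone cannot distinguish a free semimodule from a non-free projective of the same generator count'' is resolved exactly here: $\dim_{w}$ is not used as a complete invariant, but only to force $\dim_{w}(P)=1$, and a one-generated subsemimodule of a free semimodule over a division semiring \emph{is} free; the non-freeness input comes from the cited external result. Without either a precise reference for your dichotomy (valid for not necessarily commutative division semirings) or this explicit construction, the case $s\ne 0$ remains unproved in your write-up.
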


\begin{proof}
  Note first that~$D$ is either a division ring or a zerosumfree
  division semiring.  Also, if~$D$ is a division ring, then the group
  homomorphism $\jmath \colon SK_{0}(D) \to K_{0}(D)$ is always an
  isomorphism, since every right $D$-semimodule is free; that means,
  the statements are obvious.  Consider now the case when~$D$ is a
  zerosumfree division semiring.  We then have a semiring homomorphism
  $\pi \colon D\to \mathbb{B}$, defined by $\pi(0) = 0$ and $\pi(x) = 1$
  for all $0 \ne x \in D$, and the following diagram
  \[ \xymatrix{ SK_{0}(D) \ar[r]^{\jmath} \ar[d]_{SK_{0}(\pi)} 
    & K_{0}(D) \ar[d]^{K_{0}(\pi)} \\
    SK_{0}(\mathbb{B}) \ar[r]_{\jmath} & K_{0}(\mathbb{B}) } \]
  is commutative.  By Proposition~\ref{prop:55}, we obtain that $SK_{0}(D)$
  is the free abelian group with basis $\widehat{D}$.

  (1) Assume that $\jmath (n \widehat{D}) = [0] \in K_{0}(D)$ for some
  nonnegative integer~$n$.  Since the diagram above is commutative and
  $\jmath \colon SK_{0}(\mathbb{B}) \to K_{0}(\mathbb{B})$ is injective
  by Lemma~\ref{lem:68}, we get that $SK_{0}(\pi)(n \widehat{D})
  = \widehat{0} \in SK_{0}(\mathbb{B})$, that means,
  \[ \widehat{\mathbb{B}^{n}} = \widehat{\pi_{\#}(D^{n})} = 
  \widehat{0} \in SK_{0}(\mathbb{B}) \,, \]
  so that $\mathbb{B}^{n} \oplus \mathbb{B}^{m} \cong \mathbb{B}^{m}$ for
  some nonnegative integer~$m$, by Lemma~\ref{lem:65}.  This implies that
  $n = 0$, whence $\jmath \colon SK_{0}(D) \to K_{0}(D)$ is injective.

  (2) The sufficient condition follows immediately from
  \cite[Thm.~3.2]{ik:ospopsops} which shows that every projective
  right semimodule over a weakly cancellative division semiring is
  always free. In order to prove the necessary condition, we assume
  that the group homomorphism $\jmath \colon SK_{0}(D)\to K_{0}(D)$ is
  an isomorphism, and~$D$ is not a weakly cancellative division
  semiring.  Then there exists an element $s \in D$ such that
  $s \ne 1$ and $1 + 1 = 1 + s$.  Suppose that $s = 0$.  Then~$D$
  contains~$\mathbb{B}$ as a subsemiring.  Letting $\imath \colon
  \mathbb{B} \to D$ be the canonical injection, we have that
  $\pi \circ \imath = \on{id}_{\mathbb{B}}$, whence $K_{0}(\pi)
  K_{0}(\imath) = \on{id}_{K_{0}(\mathbb{B})}$.  This implies
  that~$K_{0}(\pi)$ is surjective, so that $\jmath \colon
  SK_{0}(\mathbb{B} ) \to K_{0}(\mathbb{B})$ is also surjective,
  contradicting Lemma~\ref{lem:68}.  Therefore, we must have that
  $s \ne 0$.

  Consider the subsemimodule $P=\{(d,d)a+(ds,d)b\mid a,b\in D\}$ of
  the free right $D$-semimodule $D^{2}$, where $d=(1+1)^{-1}$. We then
  have that $P \in |\mathcal{M}_{D}|$ is finitely generated projective,
  since $d + d = 1 = d + d s$ and thus the matrix $\big(
  \begin{smallmatrix} d & ds \\ d & d \end{smallmatrix} \big)$ is an
  idempotent one.  Since $\jmath \colon SK_{0}(D) \to K_{0}(D)$ is
  an isomorphism, there exists a nonnegative integer~$n$ such that
  $\jmath (n \widehat{D}) = [P] \in K_{0}(D)$, and hence,
  $D^{n} \oplus Q \cong P \oplus Q$ for some finitely generated
  projective right $D$-semimodule~$Q$.  This implies that $K_{0}(\pi)
  ([D^{n} \oplus Q]) = K_{0}(\pi) ([P\oplus Q])$, that is, 
  \[ \pi_{\#}(D^{n}) \oplus \pi_{\#}(Q) \oplus T \,\cong\,
  \pi_{\#}(P) \oplus \pi_{\#}(Q) \oplus T \]
  for some finitely generated projective right $\mathbb{B}$-semimodule~$T$.
  Furthermore, we have that $\pi_{\#}(D^{n}) \cong \mathbb{B}^{n}$ and
  $\pi_{\#}(P) \cong \mathbb{B}$, so that
  \[ \mathbb{B}^{n} \oplus \pi_{\#}(Q) \oplus T \,\cong\, \mathbb{B}
  \oplus \pi_{\#}(Q) \oplus T \,. \] 
  Consequently, we must have that $n = 1$; hence, $D \oplus Q \cong P
  \oplus Q$.  Now, applying Lemma~\ref{lem:69}, we obtain that
  $\dim_{w}(P) = 1$, so that~$P$ is free.

  On the other hand, by \cite[Prop.~3.1]{ik:ospopsops}, $P$ is not a
  free semimodule, and with this contradiction we end the proof.
\end{proof}

Theorem~\ref{thm:610} provokes a quite natural and, in our view,
interesting question.

\begin{prob}
  Describe the class of all semirings~$S$ having the homomorphism
  $\jmath \colon SK_{0}(S) \to K_{0}(S)$ to be an isomorphism.  Is this
  class axiomatizable in the first order semiring language?
\end{prob}

In order to discuss the combined structure on $SK_0(S)$ and $K_0(S)$,
we require the following definitions. Recall \cite[p.~203]{g:vnrr}
that a \emph{cone} in an abelian group~$G$ is an additively closed
subset~$C$ such that $0 \in C$.  Any cone~$C$ in~$G$ determines a
pre-order~$\le$ (\ie, a reflexive, transitive relation) on~$G$, which
is translation-invariant (\ie, $x \le y$ implies $x + z \le y + z$),
by letting $x \le y$ if and only if $y - x \in C$.  Conversely, any
translation-invariant pre-order~$\le$ on~$G$ arises in this fashion,
from the cone $\{ x \in G \mid x \ge 0 \}$.

A \emph{pre-ordered abelian group} is a pair $(G, \le)$, where~$G$ is
an abelian group and~$\le$ is a translation-invariant pre-order
on~$G$.  When there is no danger of confusion as to the pre-order
being used, we refer to~$G$ itself as a pre-ordered abelian group,
and we write~$G^+$ for the cone $\{ x \in G \mid x \ge 0 \}$.

An \emph{order-unit} in a pre-ordered abelian group~$G$ is an element
$u \in G^+$ such that for any $x \in G$, there exists a positive
integer~$n$ with $x \le n u$.  We note that if~$G$ has an order-unit,
then any element $x \in G$ can be written as the difference of two
elements of~$G^+$, whence~$G$ is generated as a group by~$G^+$.

Throughout this section, we use~$\mathcal{P}$ to denote the following
category.  The objects of~$\mathcal{P}$ are pairs $(G, u)$ such
that~$G$ is a pre-ordered abelian group and~$u$ is an order-unit
in~$G$.  The morphisms in~$\mathcal{P}$ from an object $(G, u)$ to an
object $(H, v)$ are the monotone (\ie, order-preserving) group
homomorphisms $f \colon G \to H$ such that $f(u) = v$.
Unspecificed categorical terms applied to pre-ordered abelian groups
are to be interpreted in~$\mathcal{P}$.  For example, $(G, u)
\cong (H, v)$ means that there is a group isomorphism
$f \colon G \to H$ with $f(u) = v$ and $f, f^{-1}$ monotone
(equivalently, there exists a group isomorphism $f \colon G \to H$
such that $f(u) = v$ and $f(G^{+}) = H^{+}$).

\begin{defn}[{cf.~\cite[Def., p.~203]{g:vnrr}}]
  Given a semiring~$S$, there is a natural way to make $SK_{0}(S)$
  into a pre-ordered abelian group with order-unit, as follows.
  First, let~$X$ denote the class of all finitely generated strongly
  projective right $S$-semimodules, and define $SK_{0}(S)^{+}
  = \{ \widehat{P} \mid P \in X \}$.  It is clear that
  $SK_{0}(S)^{+}$ is a cone in $SK_{0}(S)$, and we refer to the
  pre-order on $SK_{0}(S)$ determined by this cone as the natural
  pre-order on $SK_{0}(S)$.  Explicitly, we have
  $\widehat{A} - \widehat{B} \le \widehat{C} - \widehat{D}$ in
  $SK_{0}(S)$ if and only if $A \oplus D \oplus E \oplus S^{n} 
  \cong C \oplus B \oplus S^{n}$ (as right $S$-semimodules) for some
  $E \in X$ and some positive integer~$n$.
\end{defn}

For any semiring~$S$, observe that~$\widehat{S}$ is an order-unit in
the pre-ordered abelian group $SK_{0}(S)$.  Therefore, we have an
object $(SK_{0}(S), \widehat{S})$ in the category~$\mathcal{P}$
defined above.  Given any semiring homomorphism $\phi \colon R \to S$,
note that $SK_{0}(\phi)$ maps $SK_{0}(R)^{+}$ into $SK_{0}(S)^{+}$, and
that $SK_{0}(\widehat{R}) = \widehat{S}$, so that $SK_{0}(\phi)$ is a
morphism in $\mathcal{P}$ from $(SK_{0}(R), \widehat{R})$ to
$(SK_{0}(S), \widehat{S})$.  Thus, $(SK_{0}(-), \widehat{-})$ defines
a covariant functor from the category of semirings to the
category~$\mathcal{P}$.

We present the following examples in order to illustrate these notions.

\begin{exas}\label{exa:612}
  Let~$F$ be any semifield.  Then we have:
  \begin{enumerate}[label=(\arabic*)]
  \item $(SK_{0}(F), \widehat{F}) \cong (\mathbb{Z}, 1)$;
  \item $(SK_{0}(M_{n}(F)), \widehat{M_{n}(F)}) \cong 
    (\mathbb{Z}, n) \cong (\frac 1n \mathbb{Z}, 1)$, for any 
    positive integer~$n$.
  \end{enumerate}
\end{exas}

\begin{proof}
  (1) By Theorem~\ref{thm:45}, each element of $SK_0(F)$ is uniquely
  written in the form $n \widehat{F}$ with $n\in \mathbb{Z}$, giving
  an isomorphism $(SK_{0}(F), \widehat{F}) \cong (\mathbb{Z}, 1)$, $n
  \widehat{F} \mapsto n$.

  (2) Set $S := M_n(F)$.  As was shown in \cite[Theorem~5.14]{kat:thcos},
  the functors $G : \mathcal{M}_{S} \leftrightarrows \mathcal{M}_{F} : H$
  given by $G(A) = A e_{11}$ and $H(B) = B^{n}$ establish an equivalence
  of the semimodule categories~$\mathcal{M}_{S}$ and~$\mathcal{M}_{F}$,
  where~$e_{11}$ is the matrix unit in $M_{n}(F)$.  Then the restriction
  $\alpha|_{SK_0(S)} \colon SK_0(S) \to SK_0(F)$ is a group isomorphism
  with $\alpha(\widehat{S}) = \widehat{F^n} = n \widehat{F}$ in~$SK_0(F)$.
  This shows that $(SK_{0}(S), \widehat{S}) \cong (SK_0(F), n \widehat{F})$,
  and hence, we readily get that $(SK_{0}(S), \widehat{S}) \cong 
  (\mathbb{Z}, n)$, using~(1).
  The group isomorphim $\phi \colon \mathbb{Z} \to \frac{1}{n}%
  \mathbb{Z}$, defined by $\phi(1) = \frac{1}{n}$, induces an
  isomorphism $(\mathbb{Z},n) \cong (\frac{1}{n}\mathbb{Z},1)$.
\end{proof}

For any a semiring~$S$, similarly to the case of $SK_0(S)$, we may
make $K_0(S)$ into a pre-ordered abelian group by defining the cone
$K_0(S)^+$ to be the set of all~$[P]$, where~$P$ is a finitely
generated projective right $S$-semimodule.  Explicitly, we have
$[A] - [B] \le [C] - [D]$ in $K_{0}(S)$ if and only if
$A \oplus D \oplus E \oplus F \cong C \oplus B \oplus F$ (as right
$S$-semimodules) for some finitely generated projective right
$S$-semimodules~$E$ and~$F$.  However, $K_{0}(S)$ has no order-units
in general.

\begin{prop}\label{prop:613}
  For any additively idempotent commutative semiring~$S$, the
  pre-ordered group $K_{0}(S)$ has no order-units.
\end{prop}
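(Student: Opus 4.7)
The plan is to reduce to the case $S = \mathbb{B}$, already latent in the proof of Proposition~\ref{prop:67}, and then to exploit the fact that there are infinitely many primes. First I would pick a maximal congruence $\rho$ on~$S$ by Zorn's lemma, so that by \cite[Thm.~10.1]{bshhurtjankepka:scs} the quotient $S/\rho$ coincides with the Boolean semifield $\mathbb{B}$.  This yields a projection $\pi \colon S \to \mathbb{B}$ together with the canonical embedding $\imath \colon \mathbb{B} \to S$ satisfying $\pi \circ \imath = \on{id}_{\mathbb{B}}$, and by functoriality of $K_{0}$ one has $K_{0}(\pi) \circ K_{0}(\imath) = \on{id}_{K_{0}(\mathbb{B})}$. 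Both $K_{0}(\pi)$ and $K_{0}(\imath)$ are monotone: indeed, extension of scalars preserves direct summands of free semimodules (if $P \oplus Q \cong R^{(I)}$, then $(P \otimes_{R} S) \oplus (Q \otimes_{R} S) \cong S^{(I)}$), so $K_{0}(\phi)(K_{0}(R)^{+}) \subseteq K_{0}(S)^{+}$ for any semiring homomorphism~$\phi$. Consequently, if $u \in K_{0}(S)^{+}$ were an order-unit of $K_{0}(S)$, then for every $y \in K_{0}(\mathbb{B})$ one could pick~$n$ with $K_{0}(\imath)(y) \le n u$ and apply $K_{0}(\pi)$ to conclude $y \le n K_{0}(\pi)(u)$; this would make $K_{0}(\pi)(u)$ an order-unit of $K_{0}(\mathbb{B})$. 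So it is enough to rule out order-units in $K_{0}(\mathbb{B})$.

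For that, suppose for contradiction that $[P]$ is an order-unit of $K_{0}(\mathbb{B})$. By Fact~\ref{fac:46} every finitely generated projective $\mathbb{B}$-semimodule is a finite distributive lattice; set $N := |P|$. I would then choose any prime~$p$ with $p \nmid N$ and consider the chain $Q_{p} := \{0, 1, \dots, p\!-\!1\}$ used in the proof of Proposition~\ref{prop:67}, which is a finitely generated projective $\mathbb{B}$-semimodule of cardinality~$p$ by Fact~\ref{fac:46}. Since $[Q_{p}] \le n [P]$ for some $n \ge 1$, one has $n [P] - [Q_{p}] = [R]$ with~$R$ a finitely generated projective $\mathbb{B}$-semimodule, and then Remark~\ref{rem:62}\,(4) yields a finitely generated projective $\mathbb{B}$-semimodule~$F$ with $P^{n} \oplus F \cong Q_{p} \oplus R \oplus F$. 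All four semimodules being finite by Fact~\ref{fac:46}, comparing cardinalities and cancelling $|F|$ yields $N^{n} = p \cdot |R|$, so $p \mid N^{n}$, contradicting $p \nmid N$.

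I expect no serious obstacle beyond bookkeeping: the only mildly delicate point is verifying that the split-surjection argument transfers the order-unit property to the quotient, which is a routine consequence of the functoriality of~$K_{0}$ and the preservation of projectivity under extension of scalars. Once that reduction is made, the counting argument in $K_{0}(\mathbb{B})$ follows exactly the same pattern as the proof of Proposition~\ref{prop:67}.
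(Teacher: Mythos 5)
Your proposal is correct and follows essentially the same route as the paper: the same reduction to $\mathbb{B}$ via a maximal congruence and the split surjection $\pi\circ\imath=\on{id}_{\mathbb{B}}$, and the same cardinality-counting argument with the chains $Q_{p}$. The only cosmetic difference is that you fix one prime $p\nmid|P|$ in advance, whereas the paper derives $p\mid|A|$ for every prime and contradicts finiteness of $|A|$; these are logically interchangeable.
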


\begin{proof}
  We first prove the statement for the case when $S = \mathbb{B}$.
  Assume that~$[A]$ is an order-unit in $K_{0}(\mathbb{B})$.  For any
  prime number~$p$ consider the subset $Q_{p} := \{ 0, 1, \dots, 
  p \!-\! 1 \}$ of $\mathbb{Z}^{+}$ and the monoid $(Q_{p}, \max)$,
  which is a projective $\mathbb{B}$-semimodule by Fact~\ref{fac:46}.  
  Since~$[A]$ is an order-unit, we have that $[Q_{p}] \le m [A]$ in
  $K_{0}(\mathbb{B})$ for some positive integer~$m$, which means that
  $Q_{p} \oplus B \oplus C \cong A^{m} \oplus C$ for some finitely
  generated, thus finite, projective $\mathbb{B}$-semimodules~$B$ and~$C$.
  We deduce that $|Q_{p} \oplus B \oplus C| = |A^{m} \oplus C|$,
  so that $|A|^{m} = |Q_{p} \oplus B| = |Q_{p}| |B|$.  This implies that
  $p = |Q_{p}|$ divides $|A|$ for all primes~$p$, which is a contradiction.

  Now if~$S$ is an arbitrary additively idempotent commutative semiring,
  by the same reasoning as in the proof of Proposition~\ref{prop:67}, we
  have homomorphisms $\imath \colon \mathbb{B} \to S$ and $\pi \colon S
  \to \mathbb{B}$ such that $K_0(\pi) K_0(\imath) = \on{id}_{K_0(\mathbb{B})}$.
  Suppose~$U$ to be an order-unit in $K_{0}(S)$.  Then, for each
  $X \in K_{0}(\mathbb{B})$, there exists a positive integer~$n$ such
  that $K_{0}(\imath)(X) \le n U$ in $K_{0}(S)$, whence
  \[ X = K_{0}(\pi) K_{0}(\imath)(X) \le n K_{0}(\pi)(U) \]
  in $K_{0}(\mathbb{B})$, which means that $K_{0}(\pi)(U)$ is an
  order-unit in $K_{0}(\mathbb{B})$, contradicting the fact above,
  finishing our proof.
\end{proof}

Izhakian, Knebusch and Rowen~\cite{ikr:domlzs} develop a theory of the
decomposition socle for zerosumfree semimodules; and consequently, the
authors prove that a zerosumfree semiring~$S$ is a finite direct sum
of indecomposable projective semimodules if and only if~$S$ has a
finite set of orthogonal primitive idempotents whose sum is~$1_{S}$
(\cite[Thm.~3.3]{ikr:domlzs}).  In this case, every finitely generated
strongly projective $S$-semimodule is uniquely decomposed by these
orthogonal primitive idempotents (\cite[Cor.~3.4]{ikr:domlzs}).  The
following theorem allows us to express these results from the point of
view of $K$-theory.

\begin{thm}\label{thm:614}
  For any zerosumfree semiring~$S$, the following are equivalent:
  \begin{enumerate}[label=(\arabic*)]
  \item $S$ has a finite set of orthogonal primitive idempotents
    whose sum is~$1_{S}$;
  \item the monoid~$\mathcal{SV}(S)$ is cancellative and there exists a
    positive integer~$n$ such that $(SK_{0}(S), \widehat{S})
    \cong (\mathbb{Z}^{n}, (1, \dots, 1))$.
    \end{enumerate}
\end{thm}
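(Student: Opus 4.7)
The plan is to show that the isomorphism in statement~(2) reflects, at the level of semimodules, the primitive idempotent decomposition of $1_S$ given by~(1), and conversely. For $(1) \!\Longrightarrow\! (2)$, I would invoke \cite[Cor.~3.4]{ikr:domlzs} directly: given $1_S = e_1 + \cdots + e_n$ with pairwise orthogonal primitive idempotents, every finitely generated strongly projective right $S$-semimodule decomposes uniquely (up to isomorphism) in the form $(e_1 S)^{k_1} \oplus \cdots \oplus (e_n S)^{k_n}$ with non-negative integers $k_i$. This yields a monoid isomorphism $\mathcal{SV}(S) \cong (\mathbb{Z}^+)^n$ sending $\overline{P}$ to $(k_1, \dots, k_n)$ and $\overline{S}$ to $(1, \dots, 1)$.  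In particular $\mathcal{SV}(S)$ is cancellative, and passing to Grothendieck groups with the induced cones gives $(SK_0(S), \widehat{S}) \cong (\mathbb{Z}^n, (1, \dots, 1))$ in~$\mathcal{P}$.

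For the converse $(2) \!\Longrightarrow\! (1)$, the strategy is first to identify $\mathcal{SV}(S)$ with $(\mathbb{Z}^+)^n$. Since~$S$ is zerosumfree, $\mathcal{SV}(S)$ is conical, and combined with the cancellativity hypothesis this makes the canonical map $\mathcal{SV}(S) \to SK_0(S)$, $\overline{P} \mapsto \widehat{P}$, injective (using Lemma~\ref{lem:65}); by definition its image is exactly $SK_0(S)^+$, so $\mathcal{SV}(S) \cong SK_0(S)^+$ as monoids. The given isomorphism in~$\mathcal{P}$ restricts to an isomorphism of cones $SK_0(S)^+ \cong (\mathbb{Z}^+)^n$, yielding a monoid isomorphism $\theta \colon \mathcal{SV}(S) \to (\mathbb{Z}^+)^n$ with $\theta(\overline{S}) = (1, \dots, 1)$. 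Denote by $e_1, \dots, e_n$ the standard basis of $(\mathbb{Z}^+)^n$ and choose representatives $P_i$ with $\theta(\overline{P_i}) = e_i$; then $\overline{S} = \overline{P_1} + \cdots + \overline{P_n}$, \ie, $S \cong P_1 \oplus \cdots \oplus P_n$ as right $S$-semimodules, and this decomposition corresponds to pairwise orthogonal idempotents $f_1, \dots, f_n \in S$ with $f_1 + \cdots + f_n = 1_S$ and $P_i \cong f_i S$.

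It remains to argue that each $f_i$ is primitive, which follows because a standard basis vector $e_i \in (\mathbb{Z}^+)^n$ cannot be written as a sum of two nonzero elements.  Indeed, if $f_i = g + h$ for some nonzero orthogonal idempotents $g, h \in S$, then $f_i S = g S \oplus h S$ with both summands nonzero and strongly projective (each being a summand of $S_S$), giving the forbidden decomposition $e_i = \theta(\overline{gS}) + \theta(\overline{hS})$ in $(\mathbb{Z}^+)^n$. The main technical point is the cone-identification at the beginning of $(2) \!\Longrightarrow\! (1)$---verifying that cancellativity together with zerosumfree-ness of~$S$ makes $\mathcal{SV}(S)$ embed onto the positive cone of $SK_0(S)$ as an isomorphism in $\mathcal{P}$---after which the remainder is a routine translation between direct-sum decompositions of $S_S$, orthogonal idempotent decompositions of $1_S$, and additive decompositions of $(1, \dots, 1)$ in $(\mathbb{Z}^+)^n$.
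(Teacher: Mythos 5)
Your proposal is correct and follows essentially the same route as the paper: direction $(1)\Rightarrow(2)$ via \cite[Cor.~3.4]{ikr:domlzs}, and direction $(2)\Rightarrow(1)$ by pulling the standard basis vectors back to strongly projective semimodules $P_i$ with $S\cong P_1\oplus\cdots\oplus P_n$, realizing these as $e_iS$ for orthogonal idempotents, and ruling out a splitting $e_i=g+h$ because $\epsilon_i$ is not a sum of two nonzero elements of $(\mathbb{Z}^+)^n$. The only (harmless) organizational difference is that you identify $\mathcal{SV}(S)$ with the cone $SK_0(S)^+$ once up front (this is exactly Remark~\ref{rem:620}\,(2)), whereas the paper invokes Lemma~\ref{lem:65} together with cancellativity ad hoc at the two places where an equality in $SK_0(S)$ must be upgraded to an isomorphism of semimodules; also, the conicality of $\mathcal{SV}(S)$ holds for every semiring and is not where the zerosumfree hypothesis enters.
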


\begin{proof}
  (1) $\!\Longrightarrow\!$ (2).  Let $\{e_{1}, \dots, e_{n}\}$ be a
  set of orthogonal primitive idempotents of~$S$ whose sum is~$1_{S}$.
  By \cite[Cor.~3.4]{ikr:domlzs}, every finitely generated strongly
  projective right $S$-semimodule~$P$ is uniquely written in the
  form $P \cong \bigoplus_{i=1}^{n}(e_{i} S)^{n_{i}}$ for some
  non-negative integers~$n_{i}$.  This implies that the monoid
  $\mathcal{SV}(S)$ is cancellative, and the map $f \colon SK_{0}(S)
  \to \mathbb{Z}^{n}$, defined by $f(\widehat{e_{i}S}) = \epsilon_{i}$ for
  all $i = 1, \dots, n$, is a group isomorphism, where $\{\epsilon_{1},
  \dots, \epsilon_{n}\}$ is the canonical basis of the free abelian group
  $\mathbb{Z}^{n}$.  Obviuosly, $S = \bigoplus_{i=1}^{n} e_{i}S$, hence,
  $f(\widehat{S}) = \sum_{i=1}^{n} \epsilon_{i} = (1, \dots, 1) \in
  \mathbb{Z}^{n}$.  From these observations, we get that
  $(SK_{0}(S), \widehat{S}) \cong (\mathbb{Z}^{n}, (1, \dots, 1))$.

  (2) $\!\Longrightarrow\!$ (1).  Let $f \colon (SK_{0}(S), \widehat{S})
  \to (\mathbb{Z}^{n}, (1, \dots, 1))$ be an isomorphism in~$\mathcal{P}$,
  and let $\{\epsilon_{1}, \dots, \epsilon_{n} \}$ be the canonical basis
  of the free abelian group $\mathbb{Z}^{n}$.  Then, for each $1 \le i
  \le n$, there is a unique element $\widehat{P_{i}} \in SK_{0}(S)^{+}$
  such that $f(\widehat{P_{i}}) = \epsilon_{i}$.  This implies that 
  \[ f(\widehat{\textstyle\bigoplus\limits_{i=1}^{n} P_{i}})
  = \smsum_{i=1}^{n} f(\widehat{P_{i}})
  = \smsum_{i=1}^{n} \epsilon_{i} = (1, \dots, 1)
  = f(\widehat{S}) \,, \]
  so that $\widehat{\bigoplus_{i=1}^{n} P_{i}} = \widehat{S}$, since~$f$ is
  injective.  By Lemma~\ref{lem:65}, there exists a positive integer~$m$
  such that $(\bigoplus_{i=1}^{n} P_{i}) \oplus S^{m} \cong S^{m+1}$, and hence,
  $\bigoplus_{i=1}^{n} P_{i} \cong S$, since the monoid $\mathcal{SV}(S)$
  is cancellative.  Consequently, there exists a set of orthogonal
  idempotents $\{ e_{1}, \dots, e_{n} \}$ of~$S$ such that
  $\sum_{i=1}^{n} e_{i} = 1$ and $P_{i} \cong e_{i} S$ for all~$i$.

  It is left to prove that each~$e_{i}$ is primitive.  To this end,
  assume that there exist idempotent elements $e, e' \in S$ such that 
  $e e' = 0 = e' e$ and $e_{i} = e + e'$.  We then have that 
  $e_{i} S = e S \oplus e' S$, and both $x := f(\widehat{eS})$ and
  $y := f(\widehat{e'S})$ are elements in $(\mathbb{Z}^{+})^{n}$,
  due to our hypothesis that $f(SK_{0}(S)^{+}) = (\mathbb{Z}^{+})^{n}$.
  From the equality \[ x + y = f(\widehat{e S}) + f(\widehat{e' S})
  = f(\widehat{e_{i} S}) = \epsilon_{i} \,, \]
  we immediately see that either $x = \epsilon_{i}$, $y = 0$ or
  $x = 0$, $y = \epsilon_{i}$.  Suppose that $y = 0$, so that
  $f(\widehat{e' S}) = 0$, which implies $\widehat{e' S}=0$ in
  $SK_{0}(S)$.  By Lemma~\ref{lem:65}, there exists a positive integer~$k$
  such that $e^{\prime} S \oplus S^k \cong S^{k}$, and therefore, $e' S=0$
  (since the monoid $\mathcal{SV}(S)$ is cancellative), that is,
  $e' = 0$.  A similar argument applies in the case $x = 0$.
  Thus~$e_{i}$ is a primitive idempotent, as desired.
\end{proof}

In order to apply~$SK_{0}$ to direct limits of semirings, we consider
direct limits in the category~$\mathcal{P}$.  Given a direct system of
objects $(G_{i}, u_{i})$ and morphisms~$f_{ij}$ in~$\mathcal{P}$, we
first form the direct limit~$G$ of the abelian groups~$G_{i}$.  For
each~$i$, let $g_{i} \colon G_{i} \to G$ denote the canonical
homomorphism.  Setting $G^{+} = \bigcup g_{i}(G_{i}^{+})$, we obtain a
cone~$G^{+}$ in~$G$, using which~$G$ becomes a pre-ordered abelian
group.  Notice that the homomorphisms~$g_{i}$ are all monotone.
Inasmuch as $f_{ij}(u_{i}) = u_{j}$ whenever $i \le j$, there is a
unique element $u \in G$ such that $g_{i}(u_{i}) = u$ for all~$i$, and
we observe that~$u$ is an order-unit in~$G$.  Thus, $(G, u)$ is an
object in~$\mathcal{P}$, and each~$g_{i}$ is a morphism from
$(G_{i}, u_{i})$ to $(G,u)$ in~$\mathcal{P}$.  It is easy to see that
$(G, u)$ is the direct limit of the $(G_{i},u_{i})$.

Finite products in the category~$\mathcal{P}$ can be formed in a
standard manner.  Namely, given objects $(G_1, u_1), \dots, (G_n, u_n)$
in~$\mathcal{P}$, we set $G = G_1 \times \ldots \times G_n$ and
$G^+ = G^+_1 \times \ldots \times G^+_n$.  Then~$G^+$ is a cone
in~$G$, using which~$G$ becomes a pre-ordered abelian group.  Also,
$u = (u_1, \dots , u_n)$ is an order-unit in~$G$, whence $(G, u)$ is
an object in~$\mathcal{P}$.  It is easy to check that $(G, u)$ is the
product of the $(G_i, u_i)$ in $\mathcal{P}$ (see, also,
\cite[p.~210]{g:vnrr} for details).

\begin{prop}[{cf.~\cite[Prop.~15.11, Prop.~15.13]{g:vnrr}}]\label{prop:615}
  The functor $(SK_{0}(-), \widehat{-}) \colon$ $\mathcal{SR} \to
  \mathcal{P}$ preserves direct limits and direct products.
\end{prop}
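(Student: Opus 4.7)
The plan is to deduce this from Proposition~\ref{prop:54} by exploiting a natural factorization of the functor. For any semiring~$S$, the group $SK_{0}(S)$ is by construction the Grothendieck group of the commutative monoid $\mathcal{SV}(S)$: the cone $SK_{0}(S)^{+}$ is the image of $\mathcal{SV}(S)$ under the canonical map $\iota_{S} \colon \mathcal{SV}(S) \to SK_{0}(S)$, $\overline{P} \mapsto \widehat{P}$, and $\widehat{S} = \iota_{S}(\overline{S})$. Consequently, there is a ``Grothendieck'' functor $\Gamma \colon \mathcal{C} \to \mathcal{P}$ sending $(M, u)$ to the Grothendieck group of~$M$ equipped with the cone $\iota(M)$ and order-unit $\iota(u)$, and one has $(SK_{0}(-), \widehat{-}) = \Gamma \circ (\mathcal{SV}(-), \overline{-})$. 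It therefore suffices to show that $\Gamma$ preserves direct limits and finite products.

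For direct limits, the underlying group-theoretic part is immediate: the Grothendieck construction on commutative monoids is left adjoint to the forgetful functor from abelian groups into commutative monoids, hence preserves colimits. Given a direct system $\{(M_{i}, u_{i})\}$ in~$\mathcal{C}$ with limit $(M, u)$ and canonical maps $h_{i} \colon M_{i} \to M$, recall that $u = \varinjlim u_{i}$ and $M^{+} = \bigcup_{i} h_{i}(M_{i}^{+})$. The induced group isomorphism $\varinjlim \Gamma(M_{i}) \cong \Gamma(M)$ then sends the compatible family of images of the $u_{i}$ to $\iota(u)$, matching order-units, and sends $\bigcup_{i} \Gamma(h_{i})(\Gamma(M_{i})^{+})$ onto $\iota(M) = \Gamma(M)^{+}$, matching cones. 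For finite products, the standard isomorphism $\Gamma(M_{1} \times M_{2}) \cong \Gamma(M_{1}) \times \Gamma(M_{2})$ respects cones and order-units componentwise.

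An equivalent alternative route --- the one hinted at by the remark that the proof runs essentially as in Proposition~\ref{prop:54} --- is to argue directly on elements. Given a direct system $\{S_{i} \mid \phi_{ij}\}$ of semirings with limit~$S$, I would construct the canonical morphism $g \colon \varinjlim (SK_{0}(S_{i}), \widehat{S_{i}}) \to (SK_{0}(S), \widehat{S})$ in~$\mathcal{P}$ and verify surjectivity using Proposition~\ref{prop:410}\,(1) (every finitely generated strongly projective $S$-semimodule descends to some $S_{m}$, so every element of $SK_{0}(S)^{+}$ lifts, and hence every element of $SK_{0}(S)$ is a difference of such) and injectivity using Proposition~\ref{prop:410}\,(2) applied to elements written as differences $\widehat{P} - \widehat{Q}$. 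Finite products would be handled via Proposition~\ref{prop:48} analogously to the monoid case of Proposition~\ref{prop:54}.

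The main obstacle, in either route, is the bookkeeping of the pre-order structure under passage to the Grothendieck group: one must verify that the cone on $\varinjlim \Gamma(M_{i})$ inherited from the cones $\Gamma(M_{i})^{+}$ coincides with $\iota(M) = \Gamma(M)^{+}$, and similarly for products. Once that is settled, the remaining arguments are straightforward adaptations of the monoid-level reasoning already recorded in Proposition~\ref{prop:54}, which is why the authors omit the details.
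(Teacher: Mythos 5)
Your proposal is correct, and your primary route is genuinely different from the paper's. The paper handles direct limits by repeating, at the level of $SK_{0}$, the element-wise argument of Proposition~\ref{prop:54} (surjectivity via Proposition~\ref{prop:410}\,(1), injectivity via Proposition~\ref{prop:410}\,(2) together with the description of equality in $SK_{0}$ from Lemma~\ref{lem:65}), and handles finite products by an explicit argument using Proposition~\ref{prop:48}, Remark~\ref{rem:42}\,(3) and the stabilization step $k=\max\{k_{1},\dots,k_{n}\}$ from Lemma~\ref{lem:65}; your second, ``alternative'' route is exactly this. Your first route --- factoring $(SK_{0}(-),\widehat{-\,})=\Gamma\circ(\mathcal{SV}(-),\overline{-\,})$ through a group-completion functor $\Gamma\colon\mathcal{C}\to\mathcal{P}$ and then checking that $\Gamma$ preserves direct limits (it is a left adjoint on underlying monoids) and finite products (a direct check on representatives $[a]-[b]$, using that $[a]=[b]$ iff $a+c=b+c$ for some $c$) --- is a cleaner decomposition: all semiring-specific input is already contained in Proposition~\ref{prop:54}, and what remains is pure monoid theory. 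The cone bookkeeping you flag does go through: $\Gamma(M)^{+}=\iota(M)$, and naturality of $\iota$ gives $\bigcup_{i}\Gamma(h_{i})(\iota_{i}(M_{i}))=\iota\big(\bigcup_{i}h_{i}(M_{i})\big)=\iota(M)$ for limits and the componentwise identification for products, while the $S^{n}$-stabilization statements the paper extracts from Lemma~\ref{lem:65} are subsumed in the general description of equality in a group completion. What the paper's route buys is self-containedness and an explicit isomorphism; what yours buys is that Proposition~\ref{prop:63} and this proposition become formal consequences of the monoid-level results, with no arguments repeated.
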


\begin{proof}
  Similarly as it was done in the proof of Proposition~5.4, we may show
  that the functor $(SK_{0}(-),\widehat{-})$ preserves direct limits;
  hence, we will not reproduce it here. (Also, using Proposition 4.10
  and repeating verbatim the proof of \cite[Prop.~15.11]{g:vnrr}, we
  immediately get the statement.)

  Using Proposition~\ref{prop:48} and repeating the proof of
  \cite[Prop.~15.13]{g:vnrr}, we obtain that the functor
  $(SK_{0}(-),\widehat{-})$ preserves direct products, and which, for
  the reader's convenience, we provide here. Namely, let~$S$ be the
  direct product of semirings $S_{1}, \dots, S_{n}$, and for each~$i$
  let $\phi_{i} \colon S \to S_{i}$ be the canonical projection.

  Let $(G, u)$ be the product of the $(SK_{0}(S_{i}), \widehat{S_{i}})$
  in $\mathcal{P}$, and for each~$i$ let~$p_{i}$ denote the canonical
  projection $(G, u) \to (SK_{0}(S_{i}), \widehat{S_{i}})$.  We have
  morphisms $SK_{0}(\phi_{i}) \colon (SK_{0}(S),\widehat{S}) \to
  (SK_{0}(S_{i}), \widehat{S_{i}})$ for each~$i$; hence, there is a unique
  morphism $f \colon (G, u) \to (SK_{0}(S), \widehat{S})$ such that
  $p_{i} f = SK_{0}(\phi_{i})$ for all~$i$.  We are going to prove
  that~$f$ is an isomorphism in $\mathcal{P}$.

  Given $x \in G^{+}$, there exists a finitely generated strongly
  projective right $S_{i}$-semimodule~$P_{i}$ for each~$i$ such that
  $p_{i}(x) = \widehat{P_{i}}$.  Then, by Proposition~\ref{prop:48} and
  Remark~\ref{rem:42}\,(3), $P = P_{1} \times \ldots \times P_{n}$ is a
  finitely generated strongly projective right $S$-semimodule such
  that $P \otimes_{S} S_{i} \cong P_{i}$ for all~$i$.  As a result, we
  get $\widehat{P} \in SK_{0}(S)^{+}$ such that
  \[ p_{i} f(\widehat{P}) = SK_{0}(\phi_{i}) (\widehat{P})
  = \widehat{P \otimes_{S} S_{i}} = \widehat{P_{i}} = p_{i}(x) \]
  for all~$i$, whence $f(\widehat{P}) = x$.  Thus, $f(SK_{0}(S)^{+})
  = G^{+}$.  Specially, it follows that~$f$ is surjective.

  Now consider any $\widehat{P} - \widehat{Q} \in \ker(f)$.  For all~$i$
  we have \[ \widehat{P \otimes_{S} S_{i}} - \widehat{Q \otimes_{S} S_{i}}
  = SK_{0}(\phi_{i})(\widehat{P} - \widehat{Q})
  = p_{i}f(\widehat{P} - \widehat{Q}) = 0; \]
  whence $(P \otimes_{S} S_{i}) \oplus S_{i}^{k_{i}} \cong (Q \otimes_{S}
  S_{i}) \oplus S_{i}^{k_{i}}$ for some a positive integer~$k_{i}$, by
  Lemma~\ref{lem:65}.  Then we obtain a positive integer $k = \max \{ k_{1},
  \dots, k_{n} \}$ such that 
  \[ (P \oplus S^{k}) \otimes_{S} S_{i} \cong (P \otimes_{S} S_{i})
  \oplus S_{i}^{k} \cong (Q \otimes_{S} S_{i}) \oplus S_{i}^{k} 
  \cong (Q \oplus S^{k}) \otimes_{S} S_{i} \]%
  for all~$i$.  Consequently, we get that $P \oplus S^{k} \cong Q \oplus
  S^{k}$, whence $\widehat{P} - \widehat{Q} = 0$, by Lemma~\ref{lem:65}.
  Thus $f$~is injective, and the proof is finished.
\end{proof}

We are going to present semiring analogs of Elliott's
theorem~\cite{elliott:otcoilososfa} for ultramatricial algebras over
an abitrary semifield.  In order to establish the main theorems of
this section, we need some preparatory facts and notions.

\begin{prop}\label{prop:616}
  Let~$S$ be a semisimple semiring with direct product representation
  \[ S \cong M_{n_{1}}(D_{1}) \times \cdots \times M_{n_{r}}(D_{r}) \,, \]
  where $D_{1}, \dots, D_{r}$ are division semirings.  For each 
  $1 \le j \leq r$ and $1 \le i \le n_{j}$, let $e_{ii}^{(j)}$ denote the
  $n_{i} \times n_{i}$ matrix unit in~$M_{n_{j}}(D_{j})$.  Then $SK_{0}(S)$
  is a free abelian group with basis $\{ \widehat{{e_{11}}^{(1)} S},
  \dots, \widehat{{e_{11}}^{(r)}S} \}$, and $(SK_{0}(S), \widehat{S})
  \cong (\mathbb{Z}^{r}, (n_{1}, \dots, n_{r}))$.
\end{prop}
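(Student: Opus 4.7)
The plan is to reduce the statement to the monoid-level computation already carried out in Proposition~\ref{prop:55} and then pass to the Grothendieck group. Proposition~\ref{prop:55} asserts that $\mathcal{SV}(S)$ is the free commutative monoid on $\{\overline{e_{11}^{(1)}S},\dots,\overline{e_{11}^{(r)}S}\}$, with $(\mathcal{SV}(S),\overline{S})\cong((\mathbb{Z}^+)^r,(n_1,\dots,n_r))$. Since $(\mathbb{Z}^+)^r$ is cancellative, the canonical monoid map $\mathcal{SV}(S)\to SK_{0}(S)$ is injective and one reads off the group presentation: the Grothendieck group of a free commutative monoid of rank $r$ is the free abelian group of rank $r$, and the images of the free generators form a $\mathbb{Z}$-basis. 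This already identifies $SK_{0}(S)$ with $\mathbb{Z}^r$ and yields the claimed basis $\{\widehat{e_{11}^{(1)}S},\dots,\widehat{e_{11}^{(r)}S}\}$.

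It remains to track the order-unit $\widehat{S}$. Inside each factor $M_{n_j}(D_j)$ one has the direct sum decomposition $M_{n_j}(D_j)=\bigoplus_{i=1}^{n_j} e_{ii}^{(j)}M_{n_j}(D_j)$ with all summands mutually isomorphic as right $M_{n_j}(D_j)$-semimodules; pulling back along the canonical projection $\pi_j\colon S\to M_{n_j}(D_j)$ and using Proposition~\ref{prop:48}\,(2), each $e_{ii}^{(j)}S$ is isomorphic to $e_{11}^{(j)}S$ in $\mathcal{M}_S$. Hence $S_S\cong\bigoplus_{j=1}^{r}(e_{11}^{(j)}S)^{n_j}$, so under the isomorphism $SK_{0}(S)\cong\mathbb{Z}^r$ one has $\widehat{S}=\sum_{j=1}^{r}n_j\,\widehat{e_{11}^{(j)}S}\mapsto(n_1,\dots,n_r)$, completing the identification $(SK_{0}(S),\widehat{S})\cong(\mathbb{Z}^r,(n_1,\dots,n_r))$.

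An equivalent route is via Proposition~\ref{prop:615}, which gives $SK_{0}(S)\cong\prod_{j=1}^{r}SK_{0}(M_{n_j}(D_j))$ in the category $\mathcal{P}$; Theorem~\ref{thm:49}\,(2) applied to each factor shows that $\mathcal{SV}(M_{n_j}(D_j))\cong\mathbb{Z}^+$, whence $SK_{0}(M_{n_j}(D_j))\cong\mathbb{Z}$ with $\widehat{M_{n_j}(D_j)}\mapsto n_j$, and one assembles the product. Either way, there is no serious obstacle: the statement is essentially a bookkeeping exercise once Proposition~\ref{prop:55} (or equivalently Theorem~\ref{thm:49}\,(2) plus Proposition~\ref{prop:615}) is in hand, the only point requiring attention being the correct placement of the order-unit $\widehat{S}$ in the identified free abelian group.
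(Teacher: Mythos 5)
Your proposal is correct and matches the paper's (one-line) proof, which likewise derives the statement from Proposition~\ref{prop:55} together with Proposition~\ref{prop:615} and Examples~\ref{exa:612}\,(2) --- exactly your ``equivalent route.'' Your primary route, passing directly from the free commutative monoid structure of $\mathcal{SV}(S)$ in Proposition~\ref{prop:55} to its Grothendieck group and tracking the order-unit via $S_S\cong\bigoplus_{j=1}^{r}(e_{11}^{(j)}S)^{n_j}$, is a slightly more self-contained version of the same reduction and is equally valid.
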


\begin{proof}
  It follows from Propositions~\ref{prop:55} and~\ref{prop:615}, and
  Examples~\ref{exa:612}\,(2).
\end{proof}

Notice that by Corollary~\ref{cor:35} and Proposition~\ref{prop:616},
we immediately get that every free abelian group of finite rank may
appear as a~$SK_0(S)$ for some additively idempotent
congruence-semisimple semiring~$S$.  There are also pre-ordered
abelian groups not being free, which appear as a~$SK_0(S)$ for an
additively idempotent semiring~$S$.  We illustrate this by presenting
the following example.

\begin{exa}\label{exa:617}
  Fix an additively idempotent semifield~$F$ (\eg, the Boolean or the
  tropical semifield) and consider the semiring $S_{n} = M_{2^{n}}(F)$
  (for $n \ge 0$) of square matrices of order~$2^{n}$ over~$F$.  We may
  consider~$S_{n}$ as a subsemiring of~$S_{n+1}$ by identifying a
  $2^{n} \times 2^{n}$-matrix~$M$ with the $2^{n+1} \times 2^{n+1}$-matrix
  $\big( \begin{smallmatrix} M & 0 \\ 0 & M \end{smallmatrix} \big)$.
  In this way, we have a chain of additively idempotent semirings
  $S_{0} \subseteq S_{1} \subseteq S_{2} \subseteq \cdots$.
  Denoting by~$S$ the direct limit of the directed system
  $\{S_{n} \mid n \in \mathbb{Z}^{+} \}$, then~$S$ is an additively
  idempotent semiring.  Moreover, for all~$n$ we have
  \[ (SK_{0}(S_{n}), \widehat{S_{n}}) \cong (\tfrac{1}{2^{n}} \mathbb{Z}, 1) \]
  by Examples~\ref{exa:612}, so that using Proposition~\ref{prop:615}
  we see that $(SK_{0}(S), \widehat{S}) \cong (\varinjlim
  \frac{1}{2^{n}} \mathbb{Z}, 1)$; it is routine to check that
  $\varinjlim \frac{1}{2^{n}} \mathbb{Z} = \{ \frac{m}{2^{n}} \mid m
  \in \mathbb{Z} \text{ and } n \in \mathbb{Z}^{+} \}$.
\end{exa}

Example~\ref{exa:617} and the facts above motivate the following
natural question.

\begin{prob}
  Describe all pre-ordered abelian groups which can be
  either~$SK_{0}(S)$ or~$K_{0}(S)$ for some additively idempotent
  semiring~$S$.
\end{prob}

\begin{prop}\label{prop:618}
  Let~$F$ be a semifield and~$S$ a matricial $F$-algebra.  Then the
  canonical homomorphism $\jmath \colon SK_{0}(S) \to K_{0}(S)$ is
  injective.
\end{prop}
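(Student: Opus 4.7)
The plan is to reduce, via two structural steps, to the case of the semifield~$F$ itself, where Theorem~\ref{thm:610}\,(1) applies directly.  Since $S$ is a matricial $F$-algebra, we may write $S \cong M_{n_{1}}(F) \times \cdots \times M_{n_{r}}(F)$.  By Propositions~\ref{prop:63} and~\ref{prop:615}, both $K_{0}$ and $SK_{0}$ preserve finite products of semirings, and the canonical homomorphism $\jmath$ is natural.  Hence $\jmath \colon SK_{0}(S) \to K_{0}(S)$ decomposes as the direct product of the component maps $\jmath_{i} \colon SK_{0}(M_{n_{i}}(F)) \to K_{0}(M_{n_{i}}(F))$, reducing the injectivity statement to each individual factor $M_{n_{i}}(F)$.

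Second, for fixed $n$, I will invoke the Morita-type equivalence $\mathcal{M}_{M_{n}(F)} \simeq \mathcal{M}_{F}$ of \cite[Thm.~5.14]{kat:thcos} (already used in the proofs of Theorem~\ref{thm:34} and Examples~\ref{exa:612}\,(2)), given by $A \mapsto A e_{11}$ and $B \mapsto B^{n}$.  The key observation is that this equivalence restricts to a bijection on isomorphism classes of finitely generated (strongly) projective semimodules: direct summands of $M_{n}(F)^{(I)}$ map to direct summands of $(F^{n})^{(I)} = F^{(nI)}$, while conversely direct summands of $F^{(J)}$ correspond under the inverse functor to direct summands of $(F^{n})^{(J)}$, which is itself a direct summand of the free semimodule $M_{n}(F)^{(J)}$ because $M_{n}(F) \cong (e_{11} M_{n}(F))^{n}$ as right $M_{n}(F)$-semimodules (see the proof of Theorem~\ref{thm:49}).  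Consequently one obtains group isomorphisms $SK_{0}(M_{n}(F)) \cong SK_{0}(F)$ and $K_{0}(M_{n}(F)) \cong K_{0}(F)$ that intertwine the respective $\jmath$-maps.

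Finally, $F$ is a semifield and therefore a division semiring, so Theorem~\ref{thm:610}\,(1) yields the injectivity of $\jmath \colon SK_{0}(F) \to K_{0}(F)$; combining this with the two reductions concludes the proof.  The main point requiring care is the middle step, i.e., checking that the Morita equivalence preserves strong projectivity: the subtlety is that free $M_{n}(F)$-semimodules do not correspond to free $F$-semimodules but only to direct sums of copies of~$F^{n}$, so one must use that $F^{n}$ is itself a direct summand of the free $M_{n}(F)$-semimodule $M_{n}(F)$ to convert summands of frees on one side into summands of frees on the other.
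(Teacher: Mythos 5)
Your proof is correct, but it takes a genuinely different route from the paper's. The paper first disposes of the case where $F$ is a field, and for zerosumfree~$F$ works with the explicit free basis $\{\widehat{{e_{11}}^{(1)}S},\dots,\widehat{{e_{11}}^{(r)}S}\}$ of $SK_{0}(S)$ from Proposition~\ref{prop:616}: it proves the Boolean case $F=\mathbb{B}$ directly by a cardinality argument (finitely generated projective $\mathbb{B}$-semimodules are finite, so $(\bigoplus_j (e_{11}^{(j)}S)^{k_j})\oplus C\cong C$ forces all $k_j=0$), and then transfers to an arbitrary zerosumfree semifield via the surjection $\pi\colon F\to\mathbb{B}$ and the induced map $\theta\colon S\to M_{n_1}(\mathbb{B})\times\cdots\times M_{n_r}(\mathbb{B})$, using the commutativity of the square relating the two $\jmath$-maps. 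You instead reduce factor-by-factor using product preservation (Propositions~\ref{prop:63} and~\ref{prop:615}) and the naturality of~$\jmath$, then pass from $M_n(F)$ to~$F$ via the equivalence of \cite[Thm.~5.14]{kat:thcos}, and finally quote Theorem~\ref{thm:610}\,(1); since that theorem is proved earlier and independently, there is no circularity. What your route buys is economy: it reuses the already-established injectivity for division semirings (whose proof contains the same reduction-to-$\mathbb{B}$ idea) instead of redoing the counting argument, and it correctly isolates and resolves the one delicate point, namely that a category equivalence does not preserve free semimodules, so strong projectivity must be tracked by hand using $M_n(F)\cong(e_{11}M_n(F))^n$. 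What the paper's route buys is self-containedness and an argument that works uniformly on the whole matricial algebra at once, with the basis of $SK_0(S)$ made explicit; it also makes visible exactly where zerosumfreeness and finiteness over~$\mathbb{B}$ enter. (A small point common to both treatments: to conclude injectivity on the free abelian group $SK_0$ one should allow arbitrary integer coefficients, separating positive and negative parts; in your setup this is absorbed into Theorem~\ref{thm:610}\,(1).)
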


\begin{proof}
  It is easy to see that~$F$ is either a field or a zerosumfree semifield.
  If~$F$ is a field, then~$\jmath$ is obviously an isomorphism,
  since the functors~$SK_0$ and~$K_0$ are the same.  Assuming that~$F$
  is a zerosumfree semifield, write~$S$ in the form
  \[ S = M_{n_1}(F) \times \ldots \times M_{n_r}(F) \,, \]
  and for each $1 \le j \le r$ and $1 \le i \le n_j$ let $e_{ii}^{(j)}$
  be the $n_j \times n_j$ matrix unit in $M_{n_j}(F)$.  Then $SK_{0}(S)$ is
  a free abelian group with basis $\{ \widehat{{e_{11}}^{(1)} S}, \dots,
  \widehat{{e_{11}}^{(r)} S}\}$, by Proposition~\ref{prop:616}.  Therefore,
  the canonical homomorphism $\jmath \colon SK_{0}(S) \to K_{0}(S)$ is
  exactly given by $\jmath (\widehat{{e_{11}}^{(j)} S}) =
  [ e_{11}^{(j)} S]$ for all $j = 1, \dots, r$.

  Next we show that~$\jmath$ is injective for the case when~$F$ is the
  Boolean semifield~$\mathbb{B}$.  Indeed, assume that 
  $0 = \jmath (\sum_{j=1}^{r} k_{j} \widehat{{e_{11}}^{(j)} S})
  = \sum_{j=1}^{r}k_{j}[e_{11}^{(j)}S] \in K_{0}(S)$ for some nonnegative
  integers $k_{1}, \dots, k_{n}$.  We then have that
  $(\bigoplus_{j=1}^{r} (e_{11}^{(j)} S)^{k_{j}}) \oplus C \cong C$
  for some finitely generated projective right $S$-semimodule~$C$.
  Since~$S$ is finite and~$C$ is a finitely generated right
  $S$-semimodule, $C$ is also finite.  We deduce that
  $|\bigoplus_{j=1}^{r}(e_{11}^{(j)} S)^{k_{j}}| = 0$, whence $k_{j} = 0$
  for all~$j$.  Therefore, $\jmath$ is injective.

  We now consider the case when $F$ is an arbitrary zerosumfree
  semifield.  There exists a surjective semiring homomorphism
  $\pi \colon F \to \mathbb{B}$, defined by $\pi(0) = 0$ and
  $\pi(x) = 1$ for all $0 \ne x\in F$, which induces a surjective
  semiring homomorphism $\theta \colon S \to M_{n_1}(\mathbb{B})
  \times \ldots \times M_{n_r}(\mathbb{B}) =: T$.  We then have
  a commutative diagram:
  \[ \xymatrix{ SK_{0}(S) \ar[r]^{\jmath} \ar[d]_{SK_{0}(\theta)} &
    K_0(S) \ar[d]^{K_{0}(\theta)} \\
    SK_{0}(T) \ar[r]_{\jmath} & K_{0}(T) } \]
  Assume that $\jmath(\sum_{j=1}^{r} k_{j} \widehat{{e_{11}}^{(j)}S}) = 0
  \in K_{0}(S)$ for some integers $k_{1}, \dots, k_{n} \ge 0$.
  Since the above diagram is commutative and $\jmath \colon SK_{0}(T)
  \to K_{0}(T)$ is injective, we have that $SK_{0}(\theta) 
  (\sum_{j=1}^{r} k_{j} \widehat{{e_{11}}^{(j)} S}) = 0 \in SK_{0}(T)$;
  that means, $\sum_{j=1}^{r} k_{j} \widehat{{e_{11}}^{(j)}T} = 0 \in
  SK_{0}(T)$; hence $\bigoplus_{j=1}^{r} (e_{11}^{(j)} T)^{k_{j}}) \oplus
  T^{m}\cong T^{m}$ for some nonnegative integer~$m$.  This implies that
  $k_{j} = 0$ for all~$j$, whence $\jmath \colon SK_{0}(S)\to K_{0}(S)$
  is injective.
\end{proof}

Before stating the main results of this section, we name another
useful notion.

\begin{defn}
  A semiring~$S$ is said be have \emph{cancellation of projectives} if
  the monoid $\mathcal{SV}(S)$ is cancellative.
\end{defn}

\begin{rem}\label{rem:620}
  Let us note the following simple facts.
  \begin{enumerate}[label=(\arabic*)]
  \item Every semisimple semiring has cancellation of projectives.
    Indeed, this follows immediately from Proposition~\ref{prop:615}.
  \item It is not hard to see that a semiring~$S$ has cancellation of
    projectives if and only if the natural map $\mathcal{SV}(S) \to
    SK_{0}(S)$, $\overline{P}\longmapsto \widehat{P}$, is an injective
    monoid homomorphism (by using Lemma~\ref{lem:65}).  This shows that
    we may consider $\mathcal{SV}(S)$ to be the cone $SK_{0}(S)^{+}$ in
    $SK_{0}(S)$, for any semiring~$S$ having cancellation of projectives.
  \item Let~$F$ be a semifield and~$S$ a ultramatricial $F$-algebra. 
    Then~$S$ has cancellation of projectives.  Indeed, this follows
    immediately from Remark~\ref{rem:58}.
  \end{enumerate}
\end{rem}

Now we are ready to present the main theorems of this section.

\begin{thm}\label{thm:621}
  Let~$S$ and~$T$ be ultramatricial algebras over a semifield~$F$.
  Then $(SK_{0}(S), \widehat{S}) \cong (SK_{0}(T), \widehat{T})$
  if and only if $S \cong T$ as $F$-algebras.
\end{thm}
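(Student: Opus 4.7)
The plan is to reduce Theorem~\ref{thm:621} to the monoid version already established as Theorem~\ref{thm:510}. The direction $(\Longleftarrow)$ is immediate: any $F$-algebra isomorphism $S \cong T$ induces, by functoriality of $SK_0$, an isomorphism $(SK_0(S), \widehat{S}) \cong (SK_0(T), \widehat{T})$ in the category~$\mathcal{P}$.

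For the direction $(\Longrightarrow)$, I would start from an isomorphism $f \colon (SK_0(S), \widehat{S}) \to (SK_0(T), \widehat{T})$ in $\mathcal{P}$, which by definition means that $f$ is a group isomorphism satisfying $f(\widehat{S}) = \widehat{T}$ together with $f(SK_0(S)^+) = SK_0(T)^+$. Since $S$ and $T$ are ultramatricial $F$-algebras, both have cancellation of projectives by Remark~\ref{rem:620}\,(3). Combined with Lemma~\ref{lem:65}, this makes the natural monoid homomorphism $\mathcal{SV}(S) \to SK_0(S)^+$, $\overline{P} \mapsto \widehat{P}$, a monoid isomorphism---surjective by the very definition of the positive cone $SK_0(S)^+$, and injective because cancellation in $\mathcal{SV}(S)$ together with Lemma~\ref{lem:65} forces $\widehat{P} = \widehat{Q}$ to imply $\overline{P} = \overline{Q}$; the same identification holds for $T$.

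Consequently, $f$ restricts to a monoid isomorphism $\mathcal{SV}(S) \to \mathcal{SV}(T)$ carrying $\overline{S}$ to $\overline{T}$, and an application of Theorem~\ref{thm:510} then yields $S \cong T$ as $F$-algebras. The proof is therefore quite short: all substantial work has already been done in Section~\ref{sec:5}, and the only genuine point to verify is the identification of $\mathcal{SV}(-)$ with the positive cone $SK_0(-)^+$ for ultramatricial algebras, which is exactly what cancellation of projectives provides. There is no real obstacle beyond correctly assembling these ingredients.
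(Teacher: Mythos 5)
Your argument is correct and is essentially identical to the paper's proof: both directions reduce to Theorem~\ref{thm:510} via the identification of $\mathcal{SV}(-)$ with the cone $SK_{0}(-)^{+}$, which the paper likewise obtains from Remark~\ref{rem:620}\,(2) and~(3) (cancellation of projectives for ultramatricial algebras). No substantive differences.
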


\begin{proof}
  ($\Longleftarrow $). It is clear.

  ($\Longrightarrow$).  By Remark~\ref{rem:620}~(2) and~(3), we obtain
  that $\mathcal{SV}(S) = SK_{0}(S)^{+}$ and $\mathcal{SV}(T) = SK_{0}(T)^{+}$.
  Let $f \colon (SK_{0}(S), \widehat{S}) \to (SK_{0}(T), \widehat{T})$ be
  an isomorphism in the category $\mathcal{P}$.  We then have that
  $f(SK_{0}(S)^{+}) = SK_{0}(T)^{+}$, and hence, $f(\mathcal{SV}(S))
  = \mathcal{SV}(T)$.  This implies that~$f$ induces an isomorphism
  from $(\mathcal{SV}(S), \overline{S})$ onto $(\mathcal{SV}(T),
  \overline{T})$ in category $\mathcal{C}$.  Now, applying
  Theorem~\ref{thm:510}, we immediately get that $S \cong T$ as
  $F$-algebras, as desired.
\end{proof}

Theorem~\ref{thm:621} is, in general, not valid for semisimple
semirings (even, additively idempotent ones).  For example,
$(SK_{0}(\mathbb{B}), \widehat{\mathbb{B}}) \cong (\mathbb{Z}, 1)
\cong (SK_{0}(\mathbb{T}), \widehat{\mathbb{T}})$, but the Boolean
semifield~$\mathbb{B}$ and the tropical semifield~$\mathbb{T}$ are not
isomorphic.  However, as a corollary of Theorem~\ref{thm:621}, the
following result permits us to classify zerosumfree
congruence-semisimple semirings in terms of their $SK_{0}$-groups.

\begin{thm}\label{thm:622}
  Let~$S$ and~$T$ be zerosumfree congruence-semisimple semirings.  
  Then $(SK_{0}(S), \widehat{S}) \cong (SK_{0}(T), \widehat{T})$
  if and only if $S \cong T$ as $\mathbb{B}$-algebras.
\end{thm}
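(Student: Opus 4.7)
The plan is essentially a direct combination of Corollary~\ref{cor:35} and Theorem~\ref{thm:621}. The direction $(\Longleftarrow)$ is immediate from functoriality: any $\mathbb{B}$-algebra isomorphism $\phi \colon S \to T$ induces an isomorphism $SK_{0}(\phi) \colon (SK_{0}(S), \widehat{S}) \to (SK_{0}(T), \widehat{T})$ in the category $\mathcal{P}$, because $SK_{0}(-)$ is a covariant functor into $\mathcal{P}$, it sends $\widehat{S}$ to $\widehat{T}$, and it maps $SK_{0}(S)^{+}$ to $SK_{0}(T)^{+}$ (the image of a strongly projective semimodule under extension of scalars along an isomorphism remains strongly projective).

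For the nontrivial direction $(\Longrightarrow)$, the key step is to recognize that the ``zerosumfree congruence-semisimple'' hypothesis is strong enough to place $S$ and $T$ inside the class covered by Theorem~\ref{thm:621}. Specifically, by Corollary~\ref{cor:35}, any zerosumfree congruence-semisimple semiring is isomorphic to a finite direct product of matrix semirings over~$\mathbb{B}$, so I would write
\[ S \cong M_{n_{1}}(\mathbb{B}) \times \cdots \times M_{n_{k}}(\mathbb{B})
\quad\text{and}\quad
T \cong M_{m_{1}}(\mathbb{B}) \times \cdots \times M_{m_{l}}(\mathbb{B}) , \]
for positive integers $n_{1}, \dots, n_{k}$ and $m_{1}, \dots, m_{l}$. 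In particular, both $S$ and $T$ are matricial $\mathbb{B}$-algebras, and hence trivially ultramatricial $\mathbb{B}$-algebras (as the direct limit of the constant sequence consisting of the algebra itself).

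Given this, the hypothesis $(SK_{0}(S), \widehat{S}) \cong (SK_{0}(T), \widehat{T})$ in $\mathcal{P}$ is precisely the input required by Theorem~\ref{thm:621} (applied with $F = \mathbb{B}$), which then yields the desired $\mathbb{B}$-algebra isomorphism $S \cong T$. There is no real obstacle here beyond correctly invoking these two previously established results; the work has already been done in Corollary~\ref{cor:35} (the structural identification) and in Theorem~\ref{thm:621} (the $SK_{0}$-classification of ultramatricial $F$-algebras).
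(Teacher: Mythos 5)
Your proposal is correct and follows essentially the same route as the paper: identify $S$ and $T$ as matricial (hence ultramatricial) $\mathbb{B}$-algebras via Corollary~\ref{cor:35} and then apply Theorem~\ref{thm:621} with $F = \mathbb{B}$. The only difference is that you spell out the easy direction and the structural decompositions explicitly, which the paper leaves implicit.
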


\begin{proof}
  By Corollary~\ref{cor:35}, every zerosumfree semiring is a matricial
  $\mathbb{B}$-algebra.  Also, every matrical $\mathbb{B}$-algebra is
  an ultramatricial $\mathbb{B}$-algebra.  From these observations and
  Theorem~\ref{thm:621}, we immediately get the statement.
\end{proof}

Finally, according to Propositions~\ref{prop:63}, \ref{prop:615}
and~\ref{prop:618}, as well as Theorem~\ref{thm:621}, we obtain the
following result.

\begin{thm}[{cf.~\cite[Thm.~15.26]{g:vnrr}}]\label{thm:623}
  Let~$S$ and~$T$ be ultramatricial algebras over a semifield~$F$.
  Then, there exists a group isomorphism $f \colon K_{0}(S) \to
  K_{0}(T)$ such that $f([S]) = [T]$ and $f(K_{0}(S)^{+}) = K_{0}(T)^{+}$
  if and only if $S \cong T$ as $F$-algebras.
\end{thm}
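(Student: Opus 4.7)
The direction $(\Longleftarrow)$ is immediate from the functoriality of $K_0$. For $(\Longrightarrow)$, suppose $f \colon K_0(S) \to K_0(T)$ is an isomorphism in $\mathcal{P}$ with $f([S]) = [T]$. The overall plan is to produce an induced isomorphism $\overline f \colon (SK_0(S), \widehat S) \to (SK_0(T), \widehat T)$ in $\mathcal{P}$ and then invoke Theorem~\ref{thm:621}.

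I will first show that the natural transformation $\jmath_R \colon SK_0(R) \to K_0(R)$ is injective for every ultramatricial $F$-algebra $R$. Writing $R = \varinjlim R_i$ with matricial $F$-algebras $R_i$, Propositions~\ref{prop:63} and~\ref{prop:615} give $K_0(R) = \varinjlim K_0(R_i)$ and $SK_0(R) = \varinjlim SK_0(R_i)$, and hence $\jmath_R = \varinjlim \jmath_{R_i}$. By Proposition~\ref{prop:618} each $\jmath_{R_i}$ is injective, and since filtered colimits of abelian groups are exact, $\jmath_R$ is injective too. Applying this to both $S$ and $T$, we may henceforth regard $SK_0(S)$ and $SK_0(T)$ as subgroups of $K_0(S)$ and $K_0(T)$, respectively.

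The main step, and the principal obstacle, is to check that $f$ carries the subgroup $\jmath_S(SK_0(S))$ onto $\jmath_T(SK_0(T))$ and the cone $\jmath_S(SK_0(S)^+)$ onto $\jmath_T(SK_0(T)^+)$. The approach I would take is to identify $\jmath_R(SK_0(R))$ intrinsically inside the pre-ordered group $(K_0(R), K_0(R)^+, [R])$ as the subgroup of \emph{bounded} elements
\[ \{x \in K_0(R) : -n[R] \le x \le n[R] \text{ for some } n \ge 0\} \,, \]
with its positive cone consisting of those $x \in K_0(R)^+$ with $x \le n[R]$ for some $n$. The inclusion of $\jmath_R(SK_0(R))$ into the bounded elements is immediate, since any strongly projective $P$ has $P \oplus Q \cong R^n$ giving $[P] + [Q] = n[R]$. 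The reverse inclusion is the hard part: I plan to reduce to the matricial level via Proposition~\ref{prop:410} together with the direct-limit descriptions above, and then to handle the matricial case by cases on the semifield~$F$. For $F$ a field the statement is automatic, since projectivity equals strong projectivity over rings; for $F = \mathbb{B}$ one argues using the combinatorial fact that a finite distributive lattice $L$ satisfies $|L| \ge 2^{\text{atoms}(L)}$ with equality precisely for Boolean $L$, which forces the projective semimodules appearing to be free; for a general zerosumfree semifield $F$ one lifts the $\mathbb{B}$-argument along the projection $\pi \colon F \to \mathbb{B}$ used in Proposition~\ref{prop:618}, combined with the additivity of weak dimension (Lemma~\ref{lem:69}).

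Given this characterization---which involves only the data $(K_0(R), K_0(R)^+, [R])$---the isomorphism $f$ automatically sends $\jmath_S(SK_0(S))$ onto $\jmath_T(SK_0(T))$ and the corresponding cones. Consequently $\overline f := \jmath_T^{-1} \circ f \circ \jmath_S \colon SK_0(S) \to SK_0(T)$ is a well-defined group isomorphism with $\overline f(\widehat S) = \widehat T$ and $\overline f(SK_0(S)^+) = SK_0(T)^+$, that is, an isomorphism in $\mathcal{P}$. Theorem~\ref{thm:621} then yields $S \cong T$ as $F$-algebras.
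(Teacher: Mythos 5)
Your overall architecture coincides with the paper's: use Proposition~\ref{prop:618} together with the direct-limit compatibility of $SK_0$ and $K_0$ (Propositions~\ref{prop:63} and~\ref{prop:615}) to embed $SK_0(S)\hookrightarrow K_0(S)$ and $SK_0(T)\hookrightarrow K_0(T)$, argue that $f$ restricts to an isomorphism $(SK_0(S),\widehat S)\cong(SK_0(T),\widehat T)$ in $\mathcal P$, and then invoke Theorem~\ref{thm:621}. You have, moreover, correctly isolated the one step that carries real content: why does $f$ carry $\jmath_S(SK_0(S))$ (with its cone) onto $\jmath_T(SK_0(T))$? Your proposed device --- identifying $\jmath_R(SK_0(R))$ intrinsically as the subgroup of elements bounded by multiples of the order-unit $[R]$, which is manifestly preserved by any isomorphism in $\mathcal P$ --- is exactly the right way to make this step precise, and your easy inclusion is fine. (For what it is worth, the paper's own proof dispatches this step with the phrase ``by our hypothesis, we immediately get,'' so you have located precisely where the argument is thinnest.)

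The gap is that the reverse inclusion is only announced, not proved, and the plan you sketch for it does not close as stated. What has to be shown (after reducing via Morita equivalence to a single zerosumfree semifield $F$) is: if $P$ is finitely generated projective over $F$ and $P\oplus Q\oplus E\cong F^m\oplus E$ for finitely generated projective $Q,E$, then $P$ is free. Your $\mathbb B$-argument via finite distributive lattices does work (cancellativity of finite posets under disjoint union forces $J(P)$ to be an antichain), but ``lifting along $\pi\colon F\to\mathbb B$'' is not enough for general $F$: the extension functor $\pi_{\#}$ does not reflect freeness. For instance, over $\mathbb T$ the non-free projective semimodule $P$ spanned by $(1,1)$ and $(s,1)$ with $s<1$ (as in the proof of Theorem~\ref{thm:610}) satisfies $\pi_{\#}P\cong\mathbb B$, which is free. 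Combining the $\mathbb B$-case with Lemma~\ref{lem:69} only yields $\dim_w P=\dim_w\pi_{\#}P$ and $\pi_{\#}P\cong\mathbb B^{k}$; an additional argument is still required to conclude $P\cong F^{k}$ from the relation over $F$ itself. Until that is supplied, the proof is incomplete at exactly the point where Theorem~\ref{thm:623} goes beyond Theorem~\ref{thm:621}.
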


\begin{proof}
  ($\Longleftarrow $). It is obvious.

  ($\Longrightarrow$). Assume that $f \colon K_{0}(S)\to K_{0}(T)$ is
  a group homomorphism such that $f([S]) = [T]$ and $f(K_{0}(S)^{+})
  = K_{0}(T)^{+}$.  Let~$S$ be the union of an ascending sequence
  $S_{1} \subseteq S_{2} \subseteq \cdots$ of matricial subalgebras,
  and~$T$ the union of an ascending sequence $T_{1} \subseteq T_{2}
  \subseteq \cdots$ of matricial subalgebras.  We then have a
  commutative diagram in the category of abelian groups as follows:
  \[ \xymatrix{ SK_{0}(S_{1}) \ar[r] \ar[d]_{\jmath} & SK_{0}(S_{2})
    \ar[r] \ar[d]_{\jmath} & SK_{0}(S_{3}) \ar[r] \ar[d]_{\jmath} & \cdots \\
    K_{0}(S_{1}) \ar[r] & K_{0}(S_{2}) \ar[r] & K_{0}(S_{3}) \ar[r] & \cdots } \]
  
  From Proposition~\ref{prop:618} we have that the homomorphisms
  $\jmath \colon SK_{0}(S_{n}) \to K_{0}(S_{n})$ are injective for
  all~$n$.  Then, taking into account Propositions~\ref{prop:63}
  and~\ref{prop:615}, we may consider~$SK_{0}(S)$ to be a subgroup
  of~$K_{0}(S)$; and similarly, $SK_{0}(T)$ may be considered as a
  subgroup of~$K_{0}(T)$.  Then, by our hypothesis, we immediately get
  that~$f$ induces an isomorphism from $(SK_{0}(S), \widehat{S})$ onto
  $(SK_{0}(T), \widehat{T})$ in the category~$\mathcal{P}$, whence we
  obtain the statement by Theorem~\ref{thm:621}.
\end{proof}

Motivated by Elliott's program and Theorems~\ref{thm:622}
and~\ref{thm:623}, we finish this article by posting the following
problem.

\begin{prob}
  Describe all additively idempotent semirings for which the
  $SK_{0}$-groups and $K_{0}$-groups are complete invariants.
\end{prob}


\begin{thebibliography}{99} \small \itemsep0mm

\bibitem{aikn:ovsasaowcsai} J.\,Y.~Abuhlail, S.\,N.~Il'in, Y.~Katsov
and T.\,G.~Nam, On V-semirings and semirings all of whose cyclic
semimodules are injective, \emph{Comm.\ Algebra} \textbf{43} (2015),
4632--4654.

\bibitem{agg:liotsab} M.~Akian, S.~Gaubert and A.~Guterman, Linear
independence over tropical semirings and beyond, in: \emph{Tropical and
idempotent mathematics}, \emph{Contemp.\ Math.}, vol.~495, Amer.\ Math.\
Soc., Providence, RI, 2009, pp.~1--38.

\bibitem{ag:trpfswmatap} P.~Ara and K.\,R.~Goodearl, The realization
problem for some wild monoids and the Atiyah problem, \emph{Trans.\ Amer.\
Math.\ Soc.} \textbf{369} (2017), 5665--5710.

\bibitem{bshhurtjankepka:scs} R.~El Bashir, J.~Hurt, A.~Jan\v{c}a 
\v{r}\'{\i}k and T.~Kepka, Simple commutative semirings, \emph{J.~Algebra} 
\textbf{236} (2001), 277--306.

\bibitem{bass:akt} H.~Bass, \emph{Algebraic K-theory},
W.\,A.~Benjamin, Inc., New York-Amsterdam, 1968.





\bibitem{cc:sofazf} A.~Connes and C.~Consani, Schemes over $\mathbb{F%
}_1$ and zeta functions, \emph{Compos.\ Math.}\ \textbf{146} (2010),
1383--1415.




\bibitem{dr:tstatmaas} A.~Di Nola and C.~Russo, The
semiring-theoretic approach to MV-algebras: a survey, \emph{Fuzzy Sets and
Systems} \textbf{281} (2015), 134--154.

\bibitem{elliott:otcoilososfa} G.\,A.~Elliott, On the classification
of inductive limits of sequences of semisimple finite-dimensional algebras, 
\emph{J.~Algebra} \textbf{38} (1976), 29--44.

\bibitem{et:rpitcpfsaa} G.\,A.~Elliott and A.\,S.~Toms, Regularity
properties in the classification program for separable amenable $C^*$%
-algebras, \emph{Bull.\ Amer.\ Math.\ Soc.\ (N.S.)} \textbf{45}:2 (2008),
229--245.

\bibitem{gg:eotv} J.~Giansiracusa and N.~Giansiracusa, Equations of
tropical varieties, \emph{Duke Math.~J.}~\textbf{165}:18 (2016), 3379--3433.

\bibitem{glazek:agttlos} K. G\l azek, \textit{A guide to the
literature on semirings and their applications in mathematics and
information sciences}, Dordrecht-Boston-London: Kluwer Academic Publishers,
2002.

\bibitem{golan:sata} J.\,S.~Golan, \emph{Semirings and their
Applications}, Kluwer Academic Publishers, Dordrecht-Boston-London, 1999.


\bibitem{g:vnrr} K.\,R.~Goodearl, \emph{von Neumann regular rings},
Pitman, London, 1979.

\bibitem{gratzer:ua} G.~Gr\"atzer, \emph{Universal Algebra, 2nd ed.}%
, Springer-Verlag, New York-Berlin, 1979.

\bibitem{hebwei:otrosos} U.~Hebisch and H.\,J.~Weinert, On the rank
of semimodules over semirings, \emph{Collectanea Mathematica} \textbf{46}
(1998), 83--95.

\bibitem{hebwei:hoa} U.~Hebisch and H.\,J. Weinert, Semirings and
Semifields, in: \emph{Handbook of Algebra, Vol.~1}, Elsevier, Amsterdam,
1996, pp.~425--462.

\bibitem{hk:tcos} A.~Horn and N.~Kimura, The category of
semilattices, \emph{Algebra Universalis} \textbf{1} (1971), 26--38.

\bibitem{i:vs} S.\,N.~Il'in, V-semirings, \emph{Sib.\ Math.\ J.}~%
\textbf{53}:2 (2012), 222--231.


\bibitem{ik:ospopsops} S.\,N.~Il'in and Y.~Katsov, On Serre's
problem on projective semimodules over polynomial semirings, \emph{Comm.\
Algebra} \textbf{42} (2014), 4021--4032.

\bibitem{ikn:thstososaowcsap} S.\,N.~Il'in, Y.~Katsov and
T.\,G.~Nam, Toward homological structure theory of semimodules: On semirings
all of whose cyclic semimodules are projective, \emph{J.~Algebra} \textbf{476%
} (2017), 238--266.

\bibitem{ir:sa} Z.~Izhakian and L.~Rowen, Supertropical algebra, 
\emph{Adv.\ Math.}\ \textbf{225}:4 (2010), 2222--2286.

\bibitem{ikr:domlzs} Z.~Izhakian, M.~Knebusch and L.~Rowen,
Decompositions of modules lacking zero sums, \emph{Israel J.~Math.}, to
appear; see also arXiv:1511.04041.

\bibitem{ijk:pdapotp} Z.~Izhakian, M.~Johnson and M.~Kambites, Pure
dimension and projectivity of tropical polytopes, \emph{Adv.\ Math.}\ 
\textbf{303}:5 (2016), 1236--1263.

\bibitem{kat:tpaieosoars} Y.~Katsov, Tensor products and injective
enveloples of semimodules over additively regular semirings, \emph{Algebra
Colloquium} \textbf{4} (1997), 121--131.


\bibitem{kat:thcos} Y.~Katsov, Toward homological characterization
of semirings: Serre's conjecture and Bass's perfectness in a semiring
context, \emph{Algebra Universalis}~\textbf{52} (2004), 197--214.

\bibitem{klp:aocofmfsaflm} Y.~Katsov, R.~Lipyanski and B.~Plotkin,
Automorphisms of categories of free modules, free semimodules, and free Lie
modules, \emph{Comm.\ Algebra}~\textbf{35} (2007), 931--952.

\bibitem{kn:meahcos} Y.~Katsov and T.\,G.~Nam, Morita Equivalence
and Homological Characterization of Semirings, \emph{J.~Algebra Appl.}~%
\textbf{10} (2011), 445--473.

\bibitem{knt:ossss} Y.~Katsov, T.\,G.~Nam and N.\,X.~Tuyen, On
subtractive semisimple semirings, \emph{Algebra Colloquium}~\textbf{16}
(2009), 415-426.


\bibitem{lam:afcinr} T.\,Y.~Lam, \emph{A first course in
noncommutative rings, 2nd ed.}, Springer-Verlag, New York-Berlin, 2001.

\bibitem{lam:spopm} T.\,Y.~Lam, \emph{Serre's problem on projective
modules}, Springer Monographs in Mathematics, Springer-Verlag, Berlin, 2006.

\bibitem{lor:tgob} O.~Lorscheid, The geometry of blueprints: Part I:
Algebraic background and scheme theory, \emph{Adv.\ Math.}~\textbf{229}:3
(2012), 1804--1846.

\bibitem{macl:cwm} S.~Mac Lane, \emph{Categories for the Working
Mathematician}, Springer-Verlag, New York-Berlin, 1971.

\bibitem{mac:pmops} A.\,W.~Macpherson, Projective modules over
polyhedral semirings, arXiv:1507.07213v1.

\bibitem{patch:psoswvini} A.~Patchkoria, Projective semimodules over
semirings with valuations in nonnegative integers, \emph{Semigroup Forum} 
\textbf{79} (2009), 451--460.

\bibitem{rst:fsitg} J.~Richter-Gebert, B.~Sturmfels and T.~Theobald,
First steps in tropical geometry, in: \emph{Idempotent Mathematics and
Mathematical Physics}, \emph{Contemp.\ Math.}, vol. 377, Amer.\ Math.\ Soc.,
Providence, RI, 2005, pp.~289--317.
\end{thebibliography}
\end{document}